\newtheorem{thm}{Theorem}[section]
\newtheorem{lemma}[thm]{Lemma}
\newtheorem{prop}[thm]{Proposition}
\newtheorem{cor}[thm]{Corollary}
\theoremstyle{definition}
\newtheorem{defn}[thm]{Definition}
\theoremstyle{remark}
\newtheorem{rmk}[thm]{Remark}
\numberwithin{equation}{section}
\newcommand{\BA}{{\mathbb {A}}}
\newcommand{\CA}{{\mathcal {A}}}
\newcommand{\cusp}{{\mathrm{cusp}}}
\newcommand{\GL}{{\mathrm{GL}}}
\newcommand{\Hom}{{\mathrm{Hom}}}
\newcommand{\Ind}{{\mathrm{Ind}}}
\newcommand{\Ker}{{\mathrm{Ker}}}
\newcommand{\Lie}{{\mathrm{Lie}}}
\renewcommand{\Re}{{\mathrm{Re}}}
\newcommand{\Res}{{\mathrm{Res}}}
\newcommand{\res}{{\mathrm{res}}}
\newcommand{\Sp}{{\mathrm{Sp}}}
\newcommand{\Stab}{{\mathrm{Stab}}}
\newcommand{\Span}{{\mathrm{Span}}}
\newcommand{\tr}{{\mathrm{tr}}}
\newcommand{\ur}{{\mathrm{ur}}}
\newcommand{\vol}{{\mathrm{vol}}}
\newcommand{\bs}{\backslash}
\newcommand{\A}{\mathbb{A}}
\newcommand{\Z}{\mathbb{Z}} \newcommand{\Q}{\mathbb{Q}}
\newcommand{\R}{\mathbb{R}} \newcommand{\C}{\mathbb{C}}
\newcommand{\F}{\mathbb{F}} \newcommand{\cO}{\mathcal{O}}
\newcommand{\cS}{\mathcal{S}}\newcommand{\cH}{\mathcal{H}}
\newcommand{\cA}{\mathcal{A}}
\newcommand{\cE}{\mathcal {E}}
\newcommand{\cP}{\mathcal {P}}
\newcommand{\fa}{\mathfrak {a}}
\DeclareMathOperator{\FO}{\mathrm {FO}}
\DeclareMathOperator{\LO}{\mathrm {LO}}
\newcommand{\half}{\frac {1} {2}}
\newcommand{\trpz}[1]{{{}^{\mathrm{t}}\negthinspace{#1}}}
\DeclareMathOperator{\Alt}{\mathrm{Alt}}
\newcommand{\isom}{\cong}
\DeclareMathOperator{\diag}{\mathrm{diag}}
\DeclareMathOperator{\lmod}{\backslash}
\let\Re\undefined
\DeclareMathOperator{\Re}{\mathrm{Re}}
\newcommand{\smatrix}[4]{\ensuremath\bigl( \begin{smallmatrix}
#1&#2\\ #3&#4
\end{smallmatrix} \bigr)}
\newcommand{\form}[2]{\langle{#1},{#2}\rangle}
\newcommand{\rf}{\mathrm {rf}}
\newcommand{\ram}{\mathrm {ram}}
\DeclareMathOperator{\Rat}{\mathrm{Rat}}
\newcommand{\Rmnum}[1]{\expandafter\@slowromancap\romannumeral #1@}
\begin{document} \title[Periods and $(\chi,b)$-factors of Cuspidal
Forms of $\Sp(2n)$] {Periods and $(\chi,b)$-factors of Cuspidal
Automorphic Forms of Symplectic Groups}

\author{Dihua Jiang} \address{School of Mathematics, University of
Minnesota, Minneapolis, MN 55455, USA} \email{dhjiang@math.umn.edu}

\author{Chenyan Wu} \address{Shanghai Center for Mathematical Sciences, Fudan University, Shanghai, China, 200433}
\email{chywu@fudan.edu.cn}

\thanks{Corresponding author. Telephone number: .}  \keywords{Arthur
Parameters, Poles of L-functions, Periods of Automorphic Forms, Theta
Correspondence, Arthur Truncation of Eisenstein Series and Residues}

\thanks{The research of the first named author is supported in part by
  the NSF Grants DMS--1301567 and DMS--1600685. The research of the second named author is supported in part by National Natural Science Foundation of China (\#11601087) and by Program of Shanghai Academic/Technology Research Leader (\#16XD1400400).}

\date{\today}

\begin{abstract} In this paper, we introduce a new family of period
integrals attached to irreducible cuspidal automorphic representations
$\sigma$ of symplectic groups $\Sp_{2n}(\BA)$, which detects the right-most pole of the $L$-function
$L(s,\sigma\times\chi)$ for some  character $\chi$ of
$F^\times\bs\BA^\times$ of order at most $2$, and hence the occurrence of a
simple global Arthur parameter $(\chi,b)$ in the global Arthur
parameter $\psi$ attached to $\sigma$. We also give a characterisation of first occurrences of theta correspondence by (regularised) period integrals of residues of certain Eisenstein series.
\end{abstract}
\maketitle{}

\section{Introduction}

Let $\sigma$ be an irreducible cuspidal
automorphic representation of a symplectic group $\Sp_{2n}(\BA)$ with
$\BA$ the ring of adeles of a number field $F$. The tensor product
$L$-functions $L(s,\sigma\times\tau)$, with $\tau$ varying in the set of
equivalence classes of irreducible unitary cuspidal automorphic
representation of a general linear group $\GL_a(\BA)$, are important
invariants associated to $\sigma$. R. Langlands proves that this family of
$L$-functions converges absolutely when the real part of $s$ is large and
has meromorphic continuation to the whole complex plane
(\cite{MR0419366}). It remains to show that they satisfy the standard
functional equation and have finitely many poles on the real line with
$s\geq\frac{1}{2}$. Various approaches have been undertaken in order
to establish these expected analytic properties of those
$L$-functions.  The recent progress includes the work of J. Arthur
(\cite{MR3135650}) for the case when $\sigma$ has a global generic Arthur
parameter,  the Langlands-Shahidi method in
\cite{MR2683009}, and the work (\cite{MR2906909} and \cite{MR3187349})
for general $\sigma$.  As discussed in (\cite[Section
8]{jiang14:_autom_integ_trans_class_group_i}), it is not hard to check via the relevant theory on automorphic representations
that the location of the poles of the $L$-functions
$L(s,\sigma\times\tau)$ 
essentially detects the occurrence of the simple global Arthur
parameter $(\tau,b)$ in the global Arthur parameter $\psi$ of $\sigma$.

In this paper, we consider a special subfamily of those $L$-functions,
that is, the $L$-functions $L(s,\sigma\times\chi)$ with $\chi$ being
 characters of $F^\times\bs\BA^\times$ of order at most $2$. There are
three different types of global zeta integrals which represents this
family of $L$-functions. First we have the doubling method which is given in
\cite{MR892097} by I. Piatetski-Shapiro and S. Rallis. It  is regarded as
the natural generalisation of the method of J. Tate and of
Godement-Jacquet.  In their {\sl new way to get Euler products} in
\cite{MR965059}, Piatetski-Shapiro and Rallis give a second way, using an idea based on the
property that cuspidal automorphic representations are
non-singular. The theory of non-singular automorphic forms has been
established through the work of R. Howe for symplectic groups
(\cite{MR644835}) and of J.-S. Li for general classical groups
(\cite{MR1168122}). The third method outlined in \cite{MR2906909}
uses the Fourier-Jacobi coefficients of cuspidal automorphic
forms. The idea is based on the extension of the Bernstein-Zelevinsky
derivatives from general linear group to classical groups, which has
now been used to produce the automorphic descent method of Ginzburg-Rallis-Soudry in \cite{MR2848523}.

When this family of $L$-functions is represented by the doubling
zeta integrals, the poles of the $L$-functions are closely related to
the theta correspondences from symplectic groups to the towers of
orthogonal groups via the regularised Siegel-Weil formula of Kudla-Rallis and the Rallis inner product formula (\cite{MR1289491}).
A complete theory using the values or poles of the $L$-functions
$L(s,\sigma\times\chi)$ to detect the local-global relation of the theta
correspondences, initiated by Rallis in his series of work, has been given in recent papers of S. Yamana in \cite{MR3211043}
and of Gan-Qiu-Takeda in
\cite{gan14:_regul_siegel_weil_formul_secon}. We refer to
\cite{gan14:_regul_siegel_weil_formul_secon} for the complete story of
the theory and the relevant references on the topic.

One of  main results of this paper is a characterisation of the lowest occurrences of theta correspondence in terms of poles of certain Eisenstein series $E(g,f_{\chi\otimes\sigma},s)$ constructed from $(\sigma,\chi)$ (Thm.~\ref{thm:Eis-pole-LO-strengthened}). A more intricate result concerns non-vanishing of period integrals of residue of the $E(g,f_{\chi\otimes\sigma},s)$ and the first occurrences of theta correspondence (Cor.~\ref{thm:FO-Eis-pole}). We note that the first occurrence is a finer invariant of $(\sigma,\chi)$ than the lowest occurrence. (See Sec.~\ref{sec:fo} for the difference.) These period integrals \eqref{eq:period-residue-2} are integrations of  residue of $E(g,f_{\chi\otimes\sigma},s)$ twisted by a theta function over automorphic quotients of symplectic subgroups or Jacobi subgroups of $\Sp_{2(n+1)}(\A)$. Arthur truncation  is applied to regularise the integrals, but it turns out that the values do not depend on the truncation parameter. As a result, we have a notion of distinction by symplectic subgroups or Jacobi subgroups. We say that those automorphic forms are distinguished by a symplectic subgroup or a Jacobi subgroup if the relevant (regularised) period integrals do not vanish. Period integrals  \eqref{eq:period-sigma-theta} of a cusp form in $\sigma$ over these two  types of  subgroups of $\Sp_{2n}(\A)$ occur in our computation of Fourier coefficients of  theta lifts of $\sigma$. Their vanishing and non-vanishing essentially determine the first occurrence index. Thus we are  inspired  to investigate distinction by these two types of subgroups. We find that the distinction of residue of $E(g,f_{\chi\otimes\sigma},s)$ and that of cusp forms in $\sigma$ are related. This is a technical result of this paper (Thm.~\ref{thm:dist->period-residue-Eis}), on which several other results are based. 
Since the $L$-function $L(s,\sigma\times\chi)$ appears in the constant term of the
Eisenstein series $E(g,f_{\chi\otimes\sigma},s)$, it is easy to deduce
that the existence of the poles of the partial $L$-function
$L^S(s,\sigma\times\chi)$ implies that of the poles of the Eisenstein
series $E(g,f_{\chi\otimes\sigma},s)$ in the corresponding
location. Thus we get a relation between the
non-vanishing of such periods and the poles of the $L$-functions
$L(s,\sigma\times\chi)$. Hence as discussed in
\cite{jiang14:_autom_integ_trans_class_group_i}, we obtain a
relation between the non-vanishing of such periods attached to $\sigma$
and the occurrence of the simple global Arthur parameter $(\chi,b)$ in
the global Arthur parameter $\psi$ of $\sigma$.
However, it is not known in general whether the existence
the poles of the Eisenstein series $E(g,f_{\chi\otimes\sigma},s)$
implies that the poles of the partial $L$-function
$L^S(s,\sigma\times\chi)$ in the corresponding location. It remains a
hard problem to normalise properly the local intertwining operators
involving cuspidal automorphic representations with non-generic global
Arthur parameters.  



The arguments presented
in this paper are based on two lines of ideas. One is from the work of M\oe
glin (\cite{MR1473165}), which gives a characterisation of the first
occurrence of the irreducible cuspidal automorphic representation
$\sigma$ of $\Sp_{2n}(\BA)$ in the tower of orthogonal groups in terms of
the right-most pole of the Eisenstein series
$E(g,f_{\chi\otimes\sigma},s)$ built from $(\sigma,\chi)$. The other idea is from the work of
\cite{MR2540878} which treats orthogonal groups. Our previous work (\cite{MR3435720}) extends partially their results to the case of unitary groups  and our work in progress \cite{JW-UG-2} will complete the picture for that case.
This paper  extends fully the results to the  case of symplectic groups. In addition, it  exposes more precise relations among the various period integrals.

The main results of this paper are supposed to be parallel to those in \cite{MR2540878,MR3435720,JW-UG-2}. However they are not simple analogues. The family of periods involved are new and special to symplectic groups. First, the periods contain theta kernels to account for various Witt towers of orthogonal groups which form dual reductive pairs with the given symplectic groups. Theta kernels  are only known to be of moderate growth. Much more delicate analysis is employed here to show absolute convergence which is needed to justify change of order in the computation of values of the period integrals. The periods in the unitary case  also call for theta kernels and have their own peculiarities. They will be treated in \cite{JW-UG-2}.  Second, in the symplectic case, we have to consider not only the periods over subgroups of $\Sp_{2n}$ that are symplectic groups, but also those that are Jacobi groups. These new phenomena are the cause for the complexity in the computation of period integrals. 

It is worth mentioning that the periods considered in
\cite{MR2540878} have been used in a recent work of Bergeron, Millson
and M\oe glin on Hodge type theorems for arithmetic manifolds
associated to orthogonal groups (\cite{Bergeron13072016}). It is
expected that similar applications will be found for the periods
studied in \cite{MR3435720} and \cite{JW-UG-2} for unitary groups and
those investigated in this paper for symplectic groups
(\cite{Bergeron2016} and \cite{MR3012154}).

The paper is organised as follows. After setting up some notation in
Sec.~\ref{sec:notation}, we define the Eisenstein series associated to
the data $(\sigma,\chi)$ in Sec.~\ref{sec:pole-eis} and
characterise the locations of their maximal possible poles. We also
show that the maximal pole of the partial $L$-function
$L^S(s,\sigma\times\chi)$ forces the Eisenstein series to have a pole
at a corresponding point (Prop.~\ref{prop:L-pole-Eis-pole}). Then we
define the first occurrences and lowest occurrences of theta lifts
from symplectic groups to orthogonal groups in Sec.~\ref{sec:fo}. The
lowest occurrence is a coarser invariant than the first occurrence since
it is the minimum of first occurrences across many Witt towers. Via
the regularised Siegel-Weil formula, we are able to show in
Thm.~\ref{thm:Eis-pole-LO} that the location of poles of Eisenstein
series provides an upper bound for the lowest occurrence of theta
lifts. This section is largely a restatement of \cite{MR1473165}. It is included for completeness and for fixing notation. Sec.~\ref{sec:periods} contains our main results on periods
(Thms.~\ref{thm:dist->Eis-pole}, \ref{thm:dist->period-residue-Eis}
and Cor.~\ref{thm:FO-Eis-pole}). We get finer results that relate
first occurrences to distinction or non-distinction by symplectic
groups or Jacobi groups of residues of Eisenstein series twisted by
theta functions. One implication
(Thm.~\ref{thm:Eis-pole-LO-strengthened}) is that the inequality in
Thm.~\ref{thm:Eis-pole-LO} is, in fact, an equality. For the periods
we consider, the theta twist is necessary, since the target of theta
lift may not be split orthogonal groups. This is different from the case studied in \cite{MR2540878} where the target of theta lift is symplectic groups (which always split). In addition, in our case both symplectic groups
and Jacobi groups must be considered in order to have a complete picture. The
proofs of our main results occupy the rest of the paper. We indicate
here briefly the line of thoughts. In
Sec.~\ref{sec:fourier-coeff-period}, by studying Fourier coefficients
of theta lifts, we get relations
(Props.~\ref{prop:FO->period-from-fourier-coeff}, \ref{prop:periods-sigma->FO}) between first
occurrences and distinction or non-distinction of automorphic forms in
$\sigma$ twisted by theta functions. The corresponding period
integrals inspire us to consider theta-twisted period integrals of
Eisenstein series of the form in Sec.~\ref{sec:arthur-trunc}. However
due to the fact that those theta-twisted period integrals of
Eisenstein series are generally divergent, we apply the Arthur
truncation method to regularise them. We unfold the period integrals
and analyse the resulting components. Sec.~\ref{sec:value} is devoted
to the computation of values of those components, while
Sec.~\ref{sec:abs-conv-issue} justifies the change of order of
summations and integrations in the computation by dealing with issues
of absolute convergence. We are able to show that after taking residue
at a certain point only one term can possibly remain and that it
contains a theta-twisted period integral on $\sigma$ as inner
integral. Under conditions on distinction of $\sigma$, we can show
that it indeed does not vanish for some choice of data. Thus
distinction of $\sigma$ leads to  distinction of residue of
Eisenstein series. The precise statement can be found in
Thm.~\ref{thm:dist->period-residue-Eis}. Coupled with the result that
conditions on first occurrence supply all the necessary distinction or
non-distinction of $\sigma$, we deduce Cor.~\ref{thm:FO-Eis-pole}. In
this way theta correspondence becomes a means to detect the location
of the maximal pole of the partial $L$-function
$L^S(s, \sigma\times\chi)$, or, in other words, the simple factor of
the form $(\chi,b)$ in the global Arthur parameter $\psi_\sigma$ of
$\sigma$ for largest possible integer $b$. This forms part of the
$(\chi,b)$-theory as explained in \cite{jiang14:_autom_integ_trans_class_group_i}. We plan to
extend the theory to the general $(\tau,b)$-theory for general
irreducible cuspidal automorphic representations $\tau$ of $\GL_a(\BA)$ with
$a\geq 2$.


\section{Notation and the Basic Settings}


\subsection{Notation}
\label{sec:notation}

Let $F$ be a number field and $\A = \A_F$ its
ring of adeles. Fix a non-trivial additive character $\psi_F$ of $F
\lmod \A$.  We will generally write $\psi$ for $\psi_F$ unless there is confusion.
Note that $\psi$ also denotes a global Arthur parameter in the Introduction.  Let
$X$ be a non-degenerate symplectic space over $F$ of even dimension
$m$. The symplectic pairing is denoted by $\form {\ } {\ }_X$. Let
$\cH_a$ denote $a$-copies of the hyperbolic plane. Let
$e_1^+,\ldots,e_a^+,e_1^-,\ldots,e_a^-$ be the basis for $\cH_a$ such
that
$\form {e_i^+} {e_j^-}_{\cH_a} = \delta_{ij} \quad
\text {and} \quad \form {e_i^+} {e_j^+}_{\cH_a} = \form {e_i^-}
{e_j^-}_{\cH_a} = 0$,
for $i,j=1,\ldots, a$, where $\delta_{ij}$ is the
Kronecker delta. Let $\ell_a^+$ (resp. $\ell_a^-$) be the span of
$e_i^+$'s (resp. $e_i^-$'s). Then $\cH_a$ has the polarisation
$\cH_a = \ell_a^+ \oplus \ell_a^-$. Let $X_a = X \perp \cH_a$ be the $(m+2a)$-dimensional
symplectic space. Let $G (X_a)$ be the isometry group which acts on $X_a$ on
the right. Let $Q_a$ be the parabolic subgroup of
$G (X_a)$ that stabilises the isotropic subspace $\ell_a^-$. Let $M_a$
be its Levi subgroup and $N_a$ its unipotent radical. Then
$M_a \isom \GL_a \times G (X)$.
With respect to the `basis' $\ell_a^+, X, \ell_a^-$,
for $x\in \GL_a$ and $h\in G (X)$, we set $m (x,h)$ to be the element
\begin{equation*}
  \begin{pmatrix} x & & \\ & h & \\ && x^*
  \end{pmatrix} \in M_a
\end{equation*} where $x^*\in \GL_a$ is determined by $x$ via the
pairing of $\ell_a^+$ and $\ell_a^-$.  An element in $G (X)$ will be
naturally regarded as an element of $G (X_a)$. Fix a good maximal
compact subgroup $K_a = K_{G (X_a)}$ of $G (X_a) (\A)$ such that the
Iwasawa decomposition holds:
$G (X_a) (\A) = Q_a (\A)K_a$.
For a non-degenerate subspace $Z$ of $X$, set $Z_a$ to
be the subspace $Z \perp \cH_a$ of $X_a$.  Let $Y$ be a non-degenerate
quadratic space over $F$ with symmetric bilinear form $\form {\ } {\
}_Y$. Similarly we form the extended quadratic space $Y_a =Y\perp
\cH'_a$ where $\cH'_a$ is  a split quadratic space as defined analogously to
$\cH_a$. The isometry group
$G (Y_a)$ is considered to act on the left of $Y_a$. As in \cite [(0.7)] {MR1289491}, let $\chi_Y$ be the character of $F^\times \lmod
\A^\times$ given by $\chi_Y (a) = (a,(-1)^{\dim Y (\dim Y -1)/2}\det\form
{\ } {\ }_Y)$,
where $(\ ,\ )$ is the Hilbert symbol.  Thus $\chi_Y =
\chi_{Y_a}$ for all $a \in \Z_{\ge 0}$.

Next we introduce some notation from the {\sl doubling method}. Let
$X'$ be the symplectic space with the same underlying space as $X$ but
with symplectic form
$\form {\ } {\ }_{X'} = - \form {\ } {\ }_{X}$.
We will naturally identify $G (X')$ with $G (X)$. Let
$W$ be the doubled space $X \perp X'$. Then $W$ has the polarisation
$X^{\Delta} \oplus X^{\nabla}$ where
$X^\Delta = \{ (x,x)\in W | x\in X\}$ 
and $X^\nabla = \{ (x,-x)\in W | x\in X\}$.
Also form the extended space $W_a$ and let
$\iota = \iota_1 \times \iota_2 : G (X_a) \times G
(X') \rightarrow G (W_a)$ denote the natural inclusion map. Then 
$W_a = (X^\Delta \oplus \ell_a^+) \oplus (X^\nabla\oplus \ell_a^-)$
is a natural polarisation. Let $P_a$ denote the Siegel parabolic subgroup of $G
(W_a)$ that stabilises $X^\nabla \oplus \ell_a^-$.

\subsection{Poles of Certain Eisenstein Series}
\label{sec:pole-eis}

The results in this
section are not new. They are included for fixing notation and for
completeness. We follow the line of ideas of M\oe glin in \cite{MR1473165} to
determine the poles of a family of Eisenstein series, which will form a basis of arguments in  later sections.  See also \cite{MR3435720}.
We follow mostly the notation of \cite{MR518111}.

Consider the Eisenstein series on
$G (X_a)$ associated to the parabolic subgroup $Q_a$. Since $Q_a$ is a
maximal parabolic subgroup, the space $\fa_{M_a}^*$ is
one-dimensional. We identify $\C$ with $\fa_{M_a,\C}^*$ via 
$s \mapsto s(\frac{m+a+1}{2})^{-1} \rho_{Q_a}$, 
following \cite{MR2683009}, where $\rho_{Q_a}$ is the half sum of the positive
roots in $N_a$. As a result we sometimes regard $\rho_{Q_a}$ as the
number $(m+a+1) /2$.
Let $H_a$ be the homomorphism $M_a (\A) \rightarrow \fa_{M_a}$ such
that for all $m\in M_a (\A)$ and $\xi \in \fa_{M_a}^*$ we have
$\exp (\form { H_a (m)} {\xi}) = \prod_v |\xi (m_v)|_v$.
Explicitly
$\exp (\form { H_a (m (x,h))} {s}) = |\det (x)|^s_\A,
$
for $x\in \GL_a (\A)$ and $h\in G (X) (\A)$. We extend
$H_a$ to a function of $G (X_a) (\A)$ via the Iwasawa decomposition.
Let $\chi$ be a quadratic character of $F^\times \lmod \A^\times$ and
$\sigma \in \cA_\cusp (G (X))$, the set of all equivalence classes of
irreducible cuspidal automorphic representations of $G(X)(\BA)$.
Denote by $\cA_a (s,\chi,\sigma)$ the space of smooth $\C$-valued
functions $f$ on $N_a (\A)M_a (F) \lmod G (X_a) (\A)$ that satisfy the
following properties:
\begin{enumerate}
\item $f$ is right $K_a$-finite;
\item for any $x\in \GL_a (\A)$ and $g\in G (X_a) (\A)$, we have
  \begin{equation*} f (m (x,I_X)g) = \chi (\det (x)) |\det
(x)|_\A^{s+\rho_{Q_a}} f (g);
  \end{equation*}
\item for any fixed $k\in K_a$, the function $h \mapsto f (m (I_a,h)k)$ on $G (X) (\A)$ 
   is a smooth right $K_a \cap G (X) (\A)$-finite
vector in the space of $\sigma$.
\end{enumerate} 
For $f\in \cA_a (0,\chi,\sigma)$ and $s\in \C$, 
a section $f_s (g) := \exp (\form {H_a (g)} {s})f (g)$ of $\cA_a (s,\chi,\sigma)$ 
can be identified with a smooth section in
$\Ind_{Q_a (\A)}^{G (X_a) (\A)} \chi|\ |_\A^s \otimes \sigma$. Form
the Eisenstein series:
\begin{equation*} E^{Q_a} (g,s,f) = \sum_{\gamma \in Q_a (F) \lmod G
(X_a) (F)} f_s (\gamma g).
\end{equation*} By Langlands' theory of Eisenstein series, this is
absolutely convergent for $\Re s > \rho_{Q_a}$ and has meromorphic
continuation to the whole $s$-plane with finitely many poles in the
half plane $\Re s>0$, which are all real in our case. Let $\cP_a
(\chi,\sigma)$ denote the set of positive poles of $E^{Q_a} (g,s,f)$
for $f$ running over $\cA_a (0,\chi,\sigma)$. The number $s_0 >0$ lies
in $\cP_a (\chi,\sigma)$ if and only if for some $f \in \cA_a
(0,\chi,\sigma)$, the Eisenstein series $E^{Q_a} (g,s,f)$ has a pole
at $s=s_0$.

\begin{prop}\label{prop:P1-pole-Pa-pole} Assume that $\cP_1
(\chi,\sigma)$ is non-empty and let $s_0$ be its maximal member. Then
for all integers $a\ge 1$, $s=s_0 + \half (a-1) $ lies in $\cP_a
(\chi,\sigma)$ and is its maximal member.
\end{prop}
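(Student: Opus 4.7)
The plan is to argue by induction on $a$, adapting M\oe glin's strategy \cite{MR1473165}. The base case $a=1$ is the hypothesis. For the inductive step, I would realise $E^{Q_a}$ as an iterated Eisenstein series by introducing an auxiliary maximal parabolic $R \subset G(X_a)$ stabilising the isotropic line $F e_a^-$, whose Levi is $\GL_1 \times G(X_{a-1})$ with $X_{a-1} = X \perp \cH_{a-1}$. Induction in stages through the common refinement $P = Q_a \cap R$, whose Levi is $\GL_1 \times \GL_{a-1} \times G(X)$, identifies the inducing datum for $Q_a$ with a datum on $R$ whose inner component is itself built from an Eisenstein series on $G(X_{a-1})$ associated to the parabolic $Q_{a-1}$ of $G(X_{a-1})$. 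Correspondingly, $E^{Q_a}(g,s,f)$ becomes a ``double'' Eisenstein series, inner on $G(X_{a-1})$ and outer on $G(X_a)$, with an explicit affine relation between the outer parameter, the inner parameter, and the original parameter $s$ coming from the Shahidi normalisation and the modular characters of the three parabolics.

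For the existence of a pole at $s = s_0 + \half(a-1)$, the induction hypothesis produces a non-zero residual representation $\cE_{a-1}$ of $G(X_{a-1})(\A)$ from the pole of $E^{Q_{a-1}}$ at the inner parameter $s_0 + \half(a-2)$. Taking the inner residue first and then evaluating the outer Eisenstein series on $G(X_a)$ built from $\chi|\cdot|^t \otimes \cE_{a-1}$, one sees that this outer Eisenstein series acquires its own maximal pole at the value of $t$ corresponding to $s = s_0 + \half(a-1)$, in line with the standard pole analysis of Siegel-type Eisenstein series built from a square-integrable residual datum; non-vanishing of the residue is then checked at an unramified place via the Gindikin--Karpelevich formula. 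For the upper bound, I would compute the constant term of $E^{Q_a}$ along $R$ using the Langlands constant term formula. Each summand involves normalised intertwining operators whose poles are controlled by ratios of partial $L$-functions $L^S(s', \sigma \times \chi)$ evaluated at arguments $s'$ running through an arithmetic progression anchored at $s$, together with inner Eisenstein series on $G(X_{a-1})$, which by the induction hypothesis have no pole beyond $s_0 + \half(a-2)$. Combined with the $a = 1$ maximality of $s_0$, any hypothetical pole of $E^{Q_a}$ strictly above $s_0 + \half(a-1)$ would translate back, via the same iterated expansion, to a pole of $E^{Q_1}$ strictly above $s_0$, contradicting the definition of $s_0$.

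The principal obstacle is the careful tracking of the normalised intertwining operators through the induction in stages, in particular the analysis of the outer operator on $\GL_1$, whose normalisation involves $L$-functions that can themselves be singular at the relevant points. The interchange of inner and outer residues must also be justified when the outer parameter coincides with a potential pole of the normalisation. This is precisely the technical heart of M\oe glin's argument in the orthogonal setting and of its unitary-group counterpart in \cite{JW-UG-1}; the present work requires the analogous adaptation for $\Sp_{2n}$, where the orthogonal theta towers and the Rallis-type factorisation of the relevant $L$-functions govern the fine cancellations.
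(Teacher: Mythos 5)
Your plan follows exactly the strategy the paper invokes: the paper's own ``proof'' of this proposition is a single sentence citing Prop.~2.1 of \cite{JW-UG-1} as going through verbatim for $\Sp$, and that argument is in turn M\oe glin's \cite{MR1473165} induction-in-stages through the maximal parabolic $R$ with Levi $\GL_1 \times G(X_{a-1})$, analysed via constant terms and (normalised) intertwining operators. So the architecture of your sketch -- the auxiliary parabolic $R$, the common refinement with Levi $\GL_1\times\GL_{a-1}\times G(X)$, the identification of $E^{Q_a}$ as a slice of a two-parameter iterated Eisenstein series, and the separate treatment of existence and maximality -- is the right one and matches the source the paper relies on.

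Two places where the sketch is thinner than the argument actually requires. (i) For existence, it is not automatic that the outer Eisenstein series built from $\chi|\cdot|^t\otimes\cE_{a-1}$ has a pole at the claimed $t$; this must be extracted from the Langlands constant-term formula for $R$ applied to the residual datum $\cE_{a-1}$, using the known cuspidal support of $\cE_{a-1}$ (namely $\chi|\det|^{s_0+(a-2)/2}\otimes\sigma$ on $Q_{a-1}$), and the non-vanishing of the residue is a global statement that is inherited from the non-vanishing of $\cE_{a-1}$ together with non-vanishing of the relevant local intertwining operators at all places (the Gindikin--Karpelevich computation alone handles only the unramified places, with the finitely many remaining places handled by the freedom in the choice of section). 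One also needs to justify that the iterated residue really is a residue of $E^{Q_a}$ itself, i.e.\ the interchange of inner and outer residues you flag, which is where the precise affine relation $s_{\mathrm{inner}}=s-\tfrac12$, $t_{\mathrm{outer}}=s+\tfrac12(a-1)$ under the Shahidi normalisations is used. (ii) For maximality, the step ``any pole of $E^{Q_a}$ strictly above $s_0+\tfrac12(a-1)$ translates back to a pole of $E^{Q_1}$ above $s_0$'' needs the additional input that the $\GL_1$-factors in the constant term along $R$ cannot create a higher pole: this uses Prop.~\ref{prop:L-pole-Eis-pole} in reverse (a pole of $L^S(s,\sigma\times\chi)$ at $s'>\tfrac12$ forces $s'\in\cP_1(\sigma,\chi)$, hence $s'\le s_0$) together with control of the denominators in the normalising factors. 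These are precisely the details that \cite{JW-UG-1} and \cite{MR1473165} supply; the paper's claim is that no further structural adaptation is needed for $\Sp_{2n}$ beyond the bookkeeping you describe.
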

\begin{proof} The proof for the unitary case \cite [Prop.~2.1]
{MR3435720} goes through word for word for the current case. See also \cite{MR1473165}.
\end{proof}

\begin{prop}\label{prop:L-pole-Eis-pole} Assume that the partial
$L$-function $L^S (s, \sigma \times \chi)$ has a pole at $s=s_0 >
\half$ and that it is holomorphic for $\Re s> s_0$.  Then for all
integers $a\ge 1$, $s=s_0 + \half (a-1) \in \cP_a (\chi,\sigma)$.
\end{prop}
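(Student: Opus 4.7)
The plan is to reduce to the case $a=1$ via Proposition~\ref{prop:P1-pole-Pa-pole}, and then to extract the pole at $s=s_0$ from the constant term of $E^{Q_1}(g,s,f)$ along $Q_1$. Indeed, by Proposition~\ref{prop:P1-pole-Pa-pole}, once we know that $s_0 \in \cP_1(\sigma,\chi)$, we immediately obtain $s_0 + \tfrac{1}{2}(a-1) \in \cP_a(\sigma,\chi)$ for every $a \geq 1$. So we are reduced to producing, for some section $f \in \cA_1(0,\chi,\sigma)$, a pole of $E^{Q_1}(g,s,f)$ at $s=s_0$.

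First, I would write down the constant term along $Q_1$. Since $Q_1$ is maximal and $\chi$ is a (quadratic, hence self-dual) character, the only nontrivial element of the relative Weyl group contributes, and the constant term is
\begin{equation*}
   E^{Q_1}_{Q_1}(g,s,f) \;=\; f_s(g) \;+\; M(w,s)f_s(g),
\end{equation*}
where $M(w,s)$ is the global standard intertwining operator. Since the Eisenstein series has a pole at $s_0$ if and only if one of its constant terms does, it suffices to exhibit a pole of $M(w,s)f_s$ at $s=s_0$.

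Next I would invoke the Gindikin--Karpelevich formula to factor the intertwining operator through normalized local intertwining operators $R_v(s)$. At all $v \notin S$ the ratio of local $L$-factors gives the normalizing scalar, so that for a decomposable section $f = \otimes_v f_v$,
\begin{equation*}
   M(w,s) f_s \;=\; \frac{L^S(s,\sigma\times\chi)\,\zeta^S(2s)}{L^S(s+1,\sigma\times\chi)\,\zeta^S(2s+1)}\;\prod_{v\in S} R_v(s) f_{s,v} \;\otimes\; \bigotimes_{v\notin S} f_{s,v}.
\end{equation*}
At $s=s_0$: by assumption $L^S(s,\sigma\times\chi)$ has a pole; since $s_0 > \tfrac{1}{2}$ we have $2s_0 > 1$, so $\zeta^S(2s_0)$ and $\zeta^S(2s_0+1)$ are both finite and nonzero; and the holomorphy hypothesis makes $L^S(s_0+1,\sigma\times\chi)$ finite. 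It therefore suffices to choose $f_v$ at each $v\in S$ so that $R_v(s_0)f_{s_0,v} \neq 0$ (which is available, since normalized intertwining operators are known to be holomorphic and not identically zero in the region of absolute convergence of the corresponding tensor product of local $L$-factors). This would mirror the argument in the unitary case \cite{JW-UG-1}.

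The main obstacle is the possible vanishing of $L^S(s_0+1,\sigma\times\chi)$, which would in principle cancel the pole from $L^S(s,\sigma\times\chi)$. To control it, I would note that $s_0+1 > \tfrac{3}{2}$ lies strictly inside the region of holomorphy, where $L$-functions of this shape are expected to be nonzero; in practice, one appeals to a Jacquet--Shalika/Shahidi-type nonvanishing statement for $L^S(s,\sigma\times\chi)$ on the line $\Re s > s_0$, or, failing that, to a direct comparison of the order of pole of the numerator with the order of vanishing of the denominator (exactly as in M{\oe}glin's argument in \cite{MR1473165}). Once this is settled, the coefficient is genuinely meromorphic with a pole at $s_0$, and combined with a nonvanishing choice of $R_v(s_0)f_{s_0,v}$ at $v\in S$ it produces a pole of $M(w,s)f_s$, hence of $E^{Q_1}(g,s,f)$, at $s=s_0$, which completes the proof.
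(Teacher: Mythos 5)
Your overall strategy is the same as the paper's: deduce the pole of the Eisenstein series from the pole of the normalizing factor (Gindikin--Karpelevich) in the constant term along $Q_a$. The paper, however, works directly at general $a$, while you reduce to $a=1$ and then invoke Proposition~\ref{prop:P1-pole-Pa-pole}. That reduction has a real gap: Proposition~\ref{prop:P1-pole-Pa-pole} is stated and proved for $s_0$ equal to the \emph{maximal} element of $\cP_1(\sigma,\chi)$, whereas your argument only establishes $s_0 \in \cP_1(\sigma,\chi)$. You have no a priori control over poles of $E^{Q_1}$ in $\Re s > s_0$ coming from the normalized local intertwining operators, so you cannot conclude $s_0$ is maximal. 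The paper sidesteps this entirely by running the Gindikin--Karpelevich computation for $Q_a$ in $G(X_a)$ directly and evaluating the normalizing factor at $s = s_0 + \tfrac{1}{2}(a-1)$.

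Two further points. First, your formula for the $a=1$ normalizing factor contains spurious factors $\zeta^S(2s)/\zeta^S(2s+1)$. For the parabolic $Q_1$ with Levi $\GL_1 \times \Sp_m$, the dual unipotent radical in $\SO_{m+3}$ is abelian and irreducible under the dual Levi, so the normalizing factor is simply $L^S(s,\sigma\times\chi)/L^S(s+1,\sigma\times\chi)$; the $\zeta$-ratios in the paper's general-$a$ formula arise from the $\wedge^2$ piece of the $\GL_a$-action and are empty when $a=1$. This is harmless for the conclusion (since $\zeta^S(2s_0)$ and $\zeta^S(2s_0+1)$ are finite and nonzero for $s_0 > \tfrac{1}{2}$), but the formula itself is wrong. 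Second, your worry about $L^S(s_0+1,\sigma\times\chi)$ possibly \emph{vanishing} is misplaced and the stated reason is backwards: a zero in the denominator would \emph{increase} the order of the pole of the normalizing factor, not cancel it. What must be ruled out is a \emph{pole} of $L^S(s_0+1,\sigma\times\chi)$, and that is exactly what the holomorphy hypothesis for $\Re s > s_0$ delivers; no Jacquet--Shalika-type nonvanishing input is needed. The paper correctly states only that the denominator ``does not have a pole.''
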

\begin{proof} By Langlands' theory, the poles of the Eisenstein series is determined by its various constant terms. Since we consider the maximal (i.e., the right-most) pole of the partial $L$-function, it is
enough to consider the poles of the intertwining operator $M
(w_\varnothing,s)$ where $w_\varnothing$ is the longest Weyl element
in $Q_a \lmod G (X_a) / Q_a$. By the Gindikin-Karpelevich formula, we
find the normalising factor to be
  $$\prod_{1\le j \le a} \frac{L^S (s - \half (a-1) +j
-1, \sigma\times \chi)}{L^S (s - \half (a-1) +j , \sigma\times
\chi)}
\cdot \prod_{1\le i < j \le a}\frac{\zeta^S (2s - (a-1) +i +j
-2)}{\zeta^S (2s - (a-1) +i +j -1)}
$$
where $\zeta^S$ are partial Dedekind zeta
functions. This can be simplified to $I_1\cdot I_2$ with
$$
I_1= \frac{L^S (s - \half (a-1),\sigma\times\chi)}{L^S
(s + \half (a+1),\sigma\times\chi)}\quad \text{and}\quad I_2= \prod_{1 < j \le
a}\frac{\zeta^S (2s - (a-1) +j -1)}{\zeta^S (2s - (a-1) +2j -2)}.
$$
There is an extraneous factor in the proof of \cite
[Remarque 2] {MR1473165} which should be absorbed into the
Rankin-Selberg $L$-function.

At $s=s_0 + \half (a-1)$, the numerator of $I_1$ has a pole and it cannot be cancelled out by the factor in the  denominator of $I_1$ or the factors of $I_2$. Thus we
conclude that $s_0 +\half (a-1) \in \cP_a (\chi,\sigma)$.
\end{proof}

Next we relate the Eisenstein series $E^{Q_a} (s,g,f)$ on $G (X_a)$ to
the Siegel Eisenstein series on the `doubled group' $G (W_a)$.

\begin{prop} Let $\F_s$ be a $K_{G (W_a)}$-finite section of $\Ind_{P_a
    (\A)}^{G (W_a) (\A)} \chi |\ |_\A^s$ and $\phi\in \sigma$. Define a
function $F_{\phi,s} (g_a) $ on $G (X_a) (\A)$ by
  \begin{equation}\label{eq:F_phi_s} F_{\phi,s} (g_a) = \int_{G (X)
(\A)} \F_s (\iota (g_a,g))\phi (g) dg.
  \end{equation} Then the following hold.
\begin{enumerate}
\item It is absolutely convergent for $\Re s > (m+a+1)/2$ and has
meromorphic continuation to the whole $s$-plane;
\item It is a section of $\cA_a (s,\chi,\sigma)$;
\item The following identity holds via the meromorphic continuation:
  \begin{equation}\label{eq:EPa-EQa} \int_{[G (X)]} E^{P_a} (\iota
(g_a,g),s,\F_s) \phi (g) dg = E^{Q_a} (g_a,s,F_{\phi,s}).
  \end{equation}
\end{enumerate}
\end{prop}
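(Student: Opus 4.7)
The plan is to follow the standard unfolding technique of Piatetski-Shapiro and Rallis. Parts (1) and (2) are relatively direct, while part (3) is the substantive identity obtained by unfolding the Siegel Eisenstein series against the cusp form.

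For part (2), I would compute $F_{\phi,s}(m(x,I_X)g_a)$ directly from \eqref{eq:F_phi_s}. Under $\iota$, the element $\iota(m(x,I_X), I)$ lies in the Levi of the Siegel parabolic $P_a$; with respect to the polarization $W_a = (X^\Delta \oplus \ell_a^+) \oplus (X^\nabla \oplus \ell_a^-)$ its block-diagonal action has determinant $\det(x)$ on $\ell_a^+$ and is trivial on $X^\Delta$. Applying the transformation property of $\F_s$ under $P_a(\A)$ and using $\rho_{P_a} = \rho_{Q_a} = (m+a+1)/2$, the factor $\chi(\det x)|\det x|_\A^{s+\rho_{Q_a}}$ pulls out, verifying condition (ii). Condition (iii) follows by change of variable $g \mapsto h^{-1} g$ inside \eqref{eq:F_phi_s}, which shows $h \mapsto F_{\phi,s}(m(I_a,h)k)$ is obtained from $\phi$ by right translation and hence lies in the space of $\sigma$; the $K_a$-finiteness of $F_{\phi,s}$ is inherited from the $K_{W_a}$-finiteness of $\F_s$ together with $\iota(K_a) \subset K_{W_a}$. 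For part (1), absolute convergence of \eqref{eq:F_phi_s} for $\Re s > (m+a+1)/2$ reduces to standard estimates for the doubling zeta integral paired with a cusp form in the range of absolute convergence; meromorphic continuation of $F_{\phi,s}$ is then inherited from that of $E^{P_a}$ via the identity \eqref{eq:EPa-EQa} once part (3) is established.

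The core is part (3), which I would prove by unfolding:
\begin{equation*}
\int_{[G(X)]} E^{P_a}(\iota(g_a,g),s,\F_s)\phi(g)\,dg = \int_{[G(X)]}\sum_{\gamma \in P_a(F)\lmod G(W_a)(F)}\F_s(\gamma\iota(g_a,g))\phi(g)\,dg,
\end{equation*}
and then decomposing the sum according to orbits of $\iota(G(X_a)\times G(X'))(F)$ acting on $P_a(F)\lmod G(W_a)(F)$. By Witt's theorem these orbits are parametrised by the intersection patterns of the Lagrangian $X^\nabla \oplus \ell_a^-$ with the subspaces $X_a$ and $X'$ of $W_a$. The guiding principle of Piatetski-Shapiro--Rallis is that every non-open orbit contributes zero after integration against $\phi$: for such an orbit the stabiliser in $\iota(\{I\}\times G(X'))$ contains the unipotent radical of a proper parabolic subgroup of $G(X)$, and cuspidality of $\phi$ forces the corresponding inner integral to vanish.

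The main obstacle is identifying the open orbit representative $\gamma_0$ and verifying that its stabiliser in $\iota(G(X_a)\times G(X'))$ is exactly $\iota(Q_a \times \Delta G(X))$, where $\Delta G(X)$ denotes the diagonal under the identification $G(X') = G(X)$. Granting this, the open orbit contribution factors as
\begin{equation*}
\sum_{\gamma \in Q_a(F)\lmod G(X_a)(F)}\int_{G(X)(\A)}\F_s(\gamma_0\iota(\gamma g_a,g))\phi(g)\,dg,
\end{equation*}
where the $G(X)(F)$-summation is absorbed by the diagonal stabiliser to unfold the outer quotient integration from $[G(X)]$ to $G(X)(\A)$. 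After renormalising $\gamma_0$ into the definition of $\F_s$, the inner integral is precisely $F_{\phi,s}(\gamma g_a)$, and the remaining sum is $E^{Q_a}(g_a,s,F_{\phi,s})$. Care is needed to justify the interchange of summation and integration via Fubini in the domain of absolute convergence $\Re s > (m+a+1)/2$, after which meromorphic continuation extends the identity to all $s$.
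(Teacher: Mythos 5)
The paper's proof establishes parts (1) and (2) carefully but does not actually prove part (3) — the identity \eqref{eq:EPa-EQa} is asserted in the statement but never derived in the proof. Your proposal supplies the missing orbit-decomposition argument for (3), which is indeed the correct strategy (a rank-$a$ generalization of the Piatetski-Shapiro--Rallis basic identity, as carried out in M\oe glin's paper cited here), so in that respect you go further than the paper. The details you leave open are the nontrivial ones, though: the orbit structure of $P_a(F)\lmod G(W_a)(F)$ under $\iota(G(X_a)\times G(X'))(F)$ is parametrised by a pair of intersection dimensions and does have more than one orbit where the stabiliser in $\iota(\{1\}\times G(X'))$ fails to contain a full parabolic unipotent radical; showing that precisely one orbit survives (and that its stabiliser is what you claim) requires real work and is the actual content.

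There are two genuine gaps in your treatment of part (1). First, absolute convergence: the paper's argument is cleaner and avoids invoking ``standard estimates for the doubling zeta integral.'' One rewrites $\int_{G(X)(\A)}$ as $\int_{[G(X)]}\sum_{\gamma\in G(X)(F)}$ and observes that, since $\iota_2(G(X)(F))\cap P_a(F)$ is trivial, the map $\gamma\mapsto P_a\iota_2(\gamma)$ embeds $G(X)(F)$ into $P_a(F)\lmod G(W_a)(F)$; the inner sum is therefore a partial sum of the Siegel Eisenstein series on $G(W_a)$, dominated by the full series in the range $\Re s > \rho_{P_a}$, and cuspidality of $\phi$ handles the outer compact integral. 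Second, and more seriously, your derivation of meromorphic continuation of $F_{\phi,s}$ from \eqref{eq:EPa-EQa} is circular: you establish \eqref{eq:EPa-EQa} only in the region $\Re s > (m+a+1)/2$, and before knowing that $F_{\phi,s}$ continues meromorphically, the right-hand side $E^{Q_a}(g_a,s,F_{\phi,s})$ is itself only defined in that region. The paper avoids this by a different device: it reduces to showing that, for any $\xi\in\sigma$, the pairing $\int_{[G(X)]}F_{\phi,s}(h)\overline{\xi(h)}\,dh$ continues meromorphically, and then applies the Piatetski-Shapiro--Rallis basic identity on the smaller doubled group $G(W)$ (viewing $\F_s$ restricted as a section of $\Ind_{P(\A)}^{G(W)(\A)}\chi|\cdot|_\A^{s+\frac{a}{2}}$), reducing everything to the known continuation of the Siegel Eisenstein series on $G(W)$. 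You would need to adopt something like this, or else prove \eqref{eq:EPa-EQa} in a way that directly yields continuation of the section $F_{\phi,s}$ rather than presupposing it.
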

\begin{proof} Formally we have
  \begin{equation*} F_{\phi,s} (g_a) = \int_{[ G (X)]} \sum_{\gamma
\in G (X) (F)}\F_s (\iota (g_a,\gamma g))\phi (g) dg.
  \end{equation*} Since $\iota_2 (G (X) (F)) \cap P_a (F)$ is trivial
we have an embedding
\begin{equation*} G (X) (F) \hookrightarrow P_a (F) \lmod G (W_a) (F).
\end{equation*} Thus the sum above is a partial sum of an Eisenstein
series, which is absolutely convergent for $\Re s > \rho_{P_a} =
(m+a+1)/2$. Since $\phi$ is cuspidal, the integral defining
$F_{\phi,s}$ is absolutely convergent for $\Re s > (m+a+1)/2$.

Now we check if $F_{\phi,s}(g_a)$ is a section of $\cA_a
(s,\chi,\sigma)$.  Since $\iota_1 ( N_{Q_a} (\A))$ is a subset of
$N_{P_a} (\A)$, $F_{\phi,s}$ is left-invariant under $ N_{Q_a}
(\A)$. For $t_a\in \GL_a (F)$, we have $\iota_1 (m (t_a,1)) \in
M_{P_a} (F)$ and hence $F_{\phi,s}$ is left invariant under the factor
of $M_{Q_a}(F)$ that is isomorphic to $\GL_a (F)$. Let $h$ be an
element in the subgroup $G (X) (\A)$ of $M_{Q_a} (\A)$. Because
$\iota (h,h)\in P_a (\A)$, we have
\begin{align*} F_{\phi,s} (hg_a) =& \int_{G (X) (\A)} \F_s (\iota
(hg_a,g))\phi (g) dg\\ = &\int_{G (X) (\A)} \F_s (\iota (h,h)\iota
(g_a,h^{-1} g))\phi(g) dg\\ = &\int_{G (X) (\A)} \F_s (\iota
(g_a,h^{-1} g))\phi (g) dg\\ =&\int_{G (X) (\A)} \F_s (\iota (g_a,
g))\phi (hg) dg.
\end{align*} Thus $h\mapsto F_{\phi,s} (hg_a)$ is in the space of
$\sigma$.

For meromorphic continuation, it suffices to check that when $h $ belongs to
the subgroup $G (X) (\A)$ of $M_{Q_a} (\A)$, $F_{\phi,s} (h)$ has
meromorphic continuation, or equivalently, that for all $\xi\in \sigma$,
the inner product of $F_{\phi,s}|_{G (X) (\A)}$ and $\xi$ has
meromorphic continuation. By the basic identity in \cite{MR892097}, we
have
\begin{align*} \int_{[G (X)]} F_{\phi,s} (h) \overline {\xi (h)} dh =&
\int_{[G (X)]} \int_{G (X) (\A)} \F_s(\iota (1,g))\phi (hg) \overline
{\xi (h)} dg dh\\ =& \int_{[G (X) \times G (X)] } E^{P} (\iota
(h,g),s,\F_s) \phi (g) \overline {\xi (h)} dg dh
\end{align*} where $E^P$ is an Eisenstein series on $G (W)$ associated
to the Siegel parabolic $P$ and where we regard $\F_s$ as a section of
$\Ind_{P (\A)}^{G (W) (\A)} \chi|\ |_\A^{s+\half a}$ via
restriction. Then meromorphic continuation follows from that of
Eisenstein series.
\end{proof}

We summarise from \cite{MR1174424,MR1411571,MR1159110,MR1289491} the
location of possible poles of the Siegel Eisenstein series.
\begin{prop} The poles with $\Re s>0$ of $E^{P_a} (g,s,F_s)$ are at
most simple and are contained in the set
  \begin{equation*} \Xi_{m+a} (\chi) = \{\half (m+a+1) - j | j\in \Z,
0 < j < \half (m+a+1)\},
  \end{equation*} if $\chi \neq 1$; and in the set
\begin{equation*} \Xi_{m+a} (\chi) = \{\half (m+a+1) - j | j\in \Z, 0
\le j < \half (m+a+1)\}, \end{equation*} if $\chi = 1$.
\end{prop}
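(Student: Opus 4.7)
The plan is to follow the standard Kudla--Rallis analysis of the Siegel Eisenstein series on $G(W_a)=\Sp_{2(m+a)}$, which at the level of candidates for poles reduces to an elementary analysis of abelian $L$-factors. First, by Langlands' general theory, any pole of $E^{P_a}(g,s,F_s)$ in $\Re s > 0$ is a pole of its constant term along $P_a$. Since $P_a$ is the Siegel parabolic with Levi $M_{P_a}\cong \GL_{m+a}$, the inducing datum is the character $\chi|\ |_\A^s$, and $W(M_{P_a},M_{P_a})$ consists only of the identity and the longest Weyl element $w_0$. Hence the constant term equals $F_s + M(s,w_0)F_s$, so it suffices to locate the poles of $M(s,w_0)$.

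Second, I would use the Gindikin--Karpelevich formula to write the normalizing factor of $M(s,w_0)$, away from a finite set $S$ of places, as a quotient involving $L^S(s\pm c,\chi)$ and products of partial Dedekind zeta values $\zeta^S(2s-j)$, $1\le j \le m+a-1$, in the numerator, divided by the shifted versions that converge absolutely and are nonvanishing for $\Re s > 0$. Thus every pole in the right half plane must come from a pole of a numerator factor.

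Third, I would analyze those numerator factors. Each $\zeta^S$ has its unique pole at argument $1$, contributing candidate poles at $s=(j+1)/2$ for $1\le j < m+a$; and $L^S(s',\chi)$ is entire unless $\chi=1$, in which case it has one pole at $s'=1$, contributing the extra endpoint $s=(m+a+1)/2$. Collecting these candidates yields precisely
\[
\Xi_{m+a}(\chi)=\Bigl\{\tfrac{m+a+1}{2}-j \;\Big|\; j\in\Z,\ 0<j<\tfrac{m+a+1}{2}\Bigr\}
\]
and, when $\chi=1$, the extra point $j=0$. This matches the stated sets exactly.

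The main obstacle, and the reason the statement is cited from \cite{MR1174424,MR1411571,MR1159110,MR1289491} rather than re-proved here, is the passage from \emph{candidate} poles of the normalizing factor to \emph{actual simple} poles of the Eisenstein series: one must verify that the unnormalized local intertwining operators at the ramified places contribute no additional poles and do not cancel the global ones, and that the orders of the surviving poles are at most one. In the Siegel-parabolic setting this requires the detailed local analysis carried out by Kudla and Rallis in the course of their regularization of the Siegel--Weil formula, and is the step I would simply quote rather than redo.
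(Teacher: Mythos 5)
This proposition is not actually proved in the paper; it is explicitly stated as a summary of results cited from Kudla--Rallis and related works (\cite{MR1174424,MR1411571,MR1159110,MR1289491}). Your outline --- reduce to the constant term, identify the poles with those of the intertwining operator $M(s,w_0)$, compute the normalizing factor via Gindikin--Karpelevich, extract candidate poles from the numerator, and then quote the Kudla--Rallis local analysis to upgrade candidate poles to actual simple poles --- is the correct framework and does match what those references do.

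However, there is a genuine error in your step~3: the candidate set you derive does not match $\Xi_{m+a}(\chi)$. You claim the normalizing factor carries numerator zeta factors $\zeta^S(2s-j)$ for $1\le j\le m+a-1$, hence candidate poles at $s=(j+1)/2$, i.e.\ at every point of $\{1,\,3/2,\,2,\ldots,(m+a)/2\}$. But $\Xi_{m+a}(\chi)$ has a definite parity. Writing $N=m+a$, the set $\Xi_N(\chi)$ (for $\chi\neq1$) consists of half-integers $\{1/2,3/2,\ldots,(N-1)/2\}$ when $N$ is even and of integers $\{1,2,\ldots,(N-1)/2\}$ when $N$ is odd. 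Your list is a mix of both parities, and it neither contains nor is contained in $\Xi_N(\chi)$; for instance when $N$ is even the point $s=1/2\in\Xi_N(\chi)$ is absent from your list, while the point $s=1\notin\Xi_N(\chi)$ is present. The missing ingredient is that the Gindikin--Karpelevich product over the roots of $N_{P_a}$ (the short roots $e_i+e_j$, $i<j$, and the long roots $2e_i$) telescopes. After the cancellations between consecutive numerators and denominators, the surviving numerator zeta factors are of the form $\zeta^S(2s-N+2k)$, $1\le k\le \lfloor N/2\rfloor$ --- arguments spaced by $2$, not $1$ --- whose poles are at $s=(N+1)/2-k$. These are exactly the points of $\Xi_N(\chi)$, and the abelian factor $L^S(s-\rho_N+1,\chi)$ with $\rho_N=(N+1)/2$ supplies the extra endpoint $j=0$ precisely when $\chi=1$. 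For a model of this telescoping, see the proof of Proposition~\ref{prop:L-pole-Eis-pole} in this very paper: setting $m=0$, replacing $a$ by $N$, and substituting the abelian $L^S(\cdot,\chi)$ for $L^S(\cdot,\sigma\times\chi)$ gives exactly the Siegel case, and the simplified factor $I_2$ there already reflects the full cancellation. Without carrying out this cancellation your assertion ``this matches the stated sets exactly'' is false, so the argument as written does not establish the containment in $\Xi_{m+a}(\chi)$.

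Your final paragraph is correct: passing from candidate poles of the normalizing factor to actual simple poles of $E^{P_a}$, and checking that the ramified local intertwining operators introduce no further poles, does require the detailed local analysis of Kudla and Rallis and is appropriately quoted rather than redone.
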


We note the following lemmas whose proofs are completely analogous to
those in \cite [Lemma~2.6] {MR3435720}. See also
\cite{MR2540878} and \cite{MR1473165}.
\begin{lemma}  Fix $s_1\in \R$. Then for  large enough $a$, every
section of $\cA_a (s,\chi,\sigma)$ can be approximated by sections of
the form \eqref{eq:F_phi_s} for $\Re s > s_1$.
\end{lemma}
 \begin{lemma} Let $s_0$ be the maximal element in $\cP_1 (\chi,\sigma)$.
   Then for large enough $a$, there exists a section of
$\cA_a (s,\chi,\sigma)$ of the form $F_{\phi,s}$ as in
\eqref{eq:F_phi_s} such that $E^{Q_a} (g,s,F_{\phi,s})$ has a pole at
$s=s_0 + \half (a-1)$.
\end{lemma}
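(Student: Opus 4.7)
The strategy is to exploit the identity \eqref{eq:EPa-EQa} and manufacture the desired pole of $E^{Q_a}(g,s,F_{\phi,s})$ from a pole of the Siegel Eisenstein series $E^{P_a}(g,s,\F_s)$ on $G(W_a)$, transferred to $G(X_a)$ by cuspidal pairing against $\phi\in\sigma$. The argument is parallel to \cite[Lemma~2.6]{JW-UG-1} in the unitary case, adapted to the symplectic-orthogonal doubling at hand.

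First, I would choose $a$ large enough so that $s_0 + \half(a-1)$ lies in the set $\Xi_{m+a}(\chi)$ of possible poles of the Siegel Eisenstein series. Writing $j = \half(m+2) - s_0$, one has $s_0 + \half(a-1) = \half(m+a+1) - j$, and the required inequality $0 < j < \half(m+a+1)$ (respectively $0\le j < \half(m+a+1)$ if $\chi = 1$) holds as soon as $a > 2(1-s_0)$. By the proposition summarizing \cite{MR1174424,MR1411571,MR1159110,MR1289491}, one can then find a section $\F_s$ for which $E^{P_a}(g,s,\F_s)$ has a (simple) pole at $s = s_0 + \half(a-1)$.

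Next, I would invoke the regularized Siegel-Weil formula of Kudla-Rallis \cite{MR1289491}, as completed by Yamana \cite{MR3211043} and Gan-Qiu-Takeda \cite{gan14:_regul_siegel_weil_formul_secon}, to identify the residue of $E^{P_a}(g,s,\F_s)$ at $s = s_0 + \half(a-1)$ with a (possibly regularized) theta integral for a dual reductive pair $\bigl(G(W_a),\,\rO(Y)\bigr)$, where the quadratic space $Y$ satisfies $\chi_Y = \chi$ and has dimension pinned down by the location of the pole. Inserting this identification into \eqref{eq:EPa-EQa}, the residue of $E^{Q_a}(g_a,s,F_{\phi,s})$ at $s = s_0 + \half(a-1)$ becomes, up to normalizing constants, the value of the global theta lift of $\phi$ from $G(X)(\A)$ to $\rO(Y)(\A)$, restricted to the subgroup $G(X_a)\subset G(W_a)$. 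By M\oe glin's characterization of first occurrence \cite{MR1473165}, the hypothesis that $s_0$ is the maximal element of $\cP_1(\sigma,\chi)$ is equivalent to the non-vanishing of the global theta lift of $\sigma$ to $\rO(Y)(\A)$ at exactly this first-occurrence level, so choosing $\phi\in\sigma$ and Schwartz data (and hence $\F_s$) for which the theta lift is non-zero produces the sought-for pole of $E^{Q_a}(g,s,F_{\phi,s})$.

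The main technical obstacle is aligning two independent pieces of bookkeeping: first, that $s_0 + \half(a-1)$ actually falls in the regime of the regularized Siegel-Weil formula for which the residue is expressible as a theta integral, rather than in a more delicate boundary or second-term range; and second, that the quadratic space $Y$ produced by the residue of $E^{P_a}$ matches precisely the orthogonal target detected by M\oe glin's first-occurrence criterion applied to $\sigma$. Once these two are reconciled, non-vanishing of the residue of the cuspidal pairing follows automatically from M\oe glin's theorem, and no further analytic input is required; the passage $a \gg 0$ is exactly what buys us the room in the Siegel-Weil range for both to hold simultaneously.
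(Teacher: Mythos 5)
The proposal is circular on two counts and misses the lemma's actual content. First, your step 1 places $s_0 + \half(a-1)$ inside $\Xi_{m+a}(\chi)$, which requires $j = \half(m+2)-s_0$ to be an \emph{integer}; but that $j\in\Z$ is precisely the conclusion of Prop.~\ref{prop:pole-in-P1}, whose proof \emph{quotes} this lemma. A priori $s_0$ is just the largest real pole of $E^{Q_1}(g,s,f)$, and there is no reason, before the lemma is in place, for it to sit on the arithmetic progression of poles of the Siegel Eisenstein series; the lemma plus \eqref{eq:EPa-EQa} is exactly how the paper \emph{derives} that rigidity. Second, you invoke ``M\oe glin's characterization of first occurrence'' to guarantee a non-vanishing cuspidal pairing; but the relation between poles of $E^{Q_a}$ and first/lowest occurrence is what Thm.~\ref{thm:Eis-pole-LO} establishes, again using this lemma as an input. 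Within the paper's logical chain you are assuming a downstream consequence. (A smaller gap: the regularized Siegel--Weil residue of $E^{P_a}$ at $s_0+\half(a-1)$ is a \emph{sum} over quadratic spaces $Y$ of the relevant dimension with $\chi_Y=\chi$, not a single theta integral, so non-vanishing of one theta lift does not automatically give non-vanishing of the paired residue without choosing $\F_s$ to isolate the right summand.)

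The missing idea is that the lemma is a spanning statement, not a new existence-of-pole statement. Prop.~\ref{prop:P1-pole-Pa-pole} already gives a pole of $E^{Q_a}(g,s,f)$ at $s_0+\half(a-1)$ for \emph{some} $f\in\cA_a(0,\chi,\sigma)$; the only content left is to see that this pole is realized on sections of the special doubled form $F_{\phi,s}$. For this one shows that, as $\phi$ runs over $\sigma$ and $\F_s$ over $K_{W_a}$-finite Siegel sections, the functions $F_{\phi,s}$ span $\cA_a(s,\chi,\sigma)$. This is a local statement coming from the Mackey/orbit decomposition of the restriction of $\Ind_{P_a}^{G(W_a)}\chi|\cdot|^s$ to $\iota\bigl(G(X_a)\times G(X)\bigr)$: the open double coset contributes the full space $\Ind_{Q_a}^{G(X_a)}(\chi|\cdot|^s\otimes\sigma_v)$ after pairing against $\sigma_v$, so at each place every vector in the local induced representation arises. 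Once spanning is known, linearity of $f\mapsto E^{Q_a}(g,s,f)$ forces the pole to persist on some $F_{\phi,s}$, and \eqref{eq:EPa-EQa} then pins $s_0+\half(a-1)$ into $\Xi_{m+a}(\chi)$ --- deducing, rather than assuming, that $j\in\Z$. Your write-up runs the implications in the reverse order.
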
 This leads to the following:
\begin{prop}\label{prop:pole-in-P1} If $s_0$ is the maximal element in
$\cP_1 (\chi,\sigma)$, then $s_0 = \half (m+2) - j$ for some integer
$j$ with $0 < j < \half (m+2)$.
\end{prop}
\begin{proof} Assume that $s_0$ is the maximal element in $\cP_1
(\chi,\sigma)$, then $s_0 + \half (a-1)$ belongs to $\cP_a
(\chi,\sigma)$ by Prop.~\ref{prop:P1-pole-Pa-pole}. Thus there exists
a section of $\cA_a (s,\chi,\sigma)$ of the form \eqref{eq:F_phi_s}
such that $E^{Q_a} (g,s,F_{\phi,s})$ has a pole at $s=s_0 + \half
(a-1)$. Then from \eqref{eq:EPa-EQa}, this must lie in $\Xi_{m+a}
(\chi)$. Thus $s_0$ must be of the form $\half (m+2) - j$ for some
integer $j$ with $0 \le j < \half (m+2)$.

The Siegel Eisenstein series $E^{P_a}$ can have a pole at $s=\half
(m+a+1)$ only when $\chi=1$. In this case, the space of the residues
of $E^{P_a}$ at $s= \half (m+a+1)$ is the trivial representation. By
\eqref{eq:EPa-EQa}, we see that residue of $ E^{Q_a} (g_a,s,F_{\phi,s})$ must vanish at $s= \half (m+a+1)$. In other words, $s_0$ cannot achieve the value $\half (m+2)$.
\end{proof}

\subsection{First Occurrence of Theta Correspondence}
\label{sec:fo} We recall that $X$ is a symplectic space and that $Y$
is a quadratic space.  We have the space of Schwartz functions on a
maximal isotropic subspace of $(Y\otimes_F X) (\A)$.  When we do not
want to emphasise which maximal isotropic subspace is used, we write
$(Y\otimes X)^+$ for a maximal isotropic subspace and denote by
$\cS_{X,Y} (\A)$ the space of Schwartz functions on $(Y\otimes
X)^+(\BA)$.  The local version is denoted by $\cS_{X,Y} (F_v)$ for a
 place $v$ of $F$.  Since in this paper $Y$ is even dimensional we
do not need to consider the metaplectic cover of $G (X) (\A)$. Recall that we have fixed a non-trivial character $\psi$ of $F\lmod \A$. The
Weil representation $\omega_{\psi,X,Y}$ of $G (X) (\A)\times G (Y)
(\A)$ can be realised on the Schr\"odinger model $\cS_{X,Y} (\A)$. The
explicit formulae can be found, for example, in
\cite{ikeda96:_resid_of_eisen_series_and}.

For $\Phi\in \cS_{X,Y} (\A)$, $g\in G (X) (\A)$ and $h\in G (Y) (\A)$,
we form the theta series
\begin{equation*} \theta_{\psi,X,Y} (g,h,\Phi) := \sum_{w\in (Y\otimes
X)^+ (F)} \omega_{\psi,X,Y} (g,h)\Phi (w).
\end{equation*} Let $\sigma\in \cA_\cusp (G (X))$. For $\phi\in\sigma$
and $\Phi\in \cS_{X,Y} (\A)$, define
\begin{equation}\label{eq:theta-lift-defn} \theta_{\psi,X}^Y
(h,\phi,\Phi) :=\int_{[G (X)]} \theta_{\psi,X,Y} (g,h,\Phi) \phi (g)
dg.
\end{equation} Then the theta lift $\theta_{\psi,X}^Y (\sigma)$ of
$\sigma$ from $G (X)$ to $G (Y)$ is defined to be the span of all such
$\theta_{\psi,X}^Y (h,\phi,\Phi)$'s.  We note that if $Y$ trivial,
then the Weil representation is the one-dimensional trivial representation
and the theta series reduces to a constant function of $G (X) (\A)$.

The first occurrence index $\FO_{\psi}^{Y} (\sigma)$ of $\sigma$ in the Witt
tower of $Y$ is defined to be the smallest number $\dim Y'$ such that
$\theta_{\psi,X}^{Y'} (\sigma)$ is non-zero for $Y'$ running through the
spaces that are in the Witt tower of $Y$.  Fix a quadratic character
$\chi$. Then we define the lowest occurrence index of $\sigma$ with respect
to $\chi$ as follows:
\begin{equation*} \LO_{\psi,\chi} (\sigma) = \min_Y \{\FO_\psi^{Y}
(\sigma)\},
\end{equation*} where $Y$ runs over all non-degenerate quadratic
spaces with $\chi_Y = \chi$ and $\dim Y \equiv 0\pmod {2}$.

With this setup we can state our theorems that derive information on
lowest occurrence of theta lift from poles of Eisenstein series or
$L$-functions.

\begin{thm}\label{thm:Eis-pole-LO} Let $\sigma\in \cA_\cusp (G (X))$
and $s_0$ be the maximal element in the set $\cP_1 (\chi,\sigma)$. Then
  \begin{enumerate}
  \item $s_0 = \half (m+2)-j$ for some integer $j$ such that
    $0 < j < \half (m+2)$;
  \item $\LO_{\psi,\chi} (\sigma) \le 2j$;
  \item $2j \ge r_X$ where $r_X = \frac{m}{2}$ is the Witt index of
$X$.
  \end{enumerate}
\end{thm}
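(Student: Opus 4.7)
\emph{Proof plan.}

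Part (1) follows immediately from Proposition~\ref{prop:pole-in-P1}, which has already established the possible locations of $s_0$.

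For Part (2), the approach is to push the pole of the Eisenstein series $E^{Q_a}$ on $G(X_a)$ across to the Siegel Eisenstein series $E^{P_a}$ on the doubled group $G(W_a)$ and then interpret the residue via the regularised Siegel--Weil formula. Concretely, Proposition~\ref{prop:P1-pole-Pa-pole} propagates $s_0 \in \cP_1(\sigma,\chi)$ to $s_0 + \half(a-1) \in \cP_a(\sigma,\chi)$ for every $a \ge 1$, and for $a$ large enough the lemma preceding Proposition~\ref{prop:pole-in-P1} produces a section $F_{\phi,s}$ of the form \eqref{eq:F_phi_s} such that $E^{Q_a}(g,s,F_{\phi,s})$ has a pole at $s = s_0 + \half(a-1)$. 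Via the identity \eqref{eq:EPa-EQa}, this pole transfers to the pairing of the Siegel Eisenstein series $E^{P_a}(\iota(g_a,g),s,\F_s)$ against the cusp form $\phi$, at the value $s = \half(m+a+1) - j$, which lies in $\Xi_{m+a}(\chi)$. Invoking the regularised Siegel--Weil formula of Kudla--Rallis (\cite{MR1289491}) together with its extensions by Yamana (\cite{MR3211043}) and Gan--Qiu--Takeda (\cite{gan14:_regul_siegel_weil_formul_secon}), this residue is expressed, up to a non-zero constant, as a regularised theta integral attached to a quadratic space $Y$ of dimension $2j$ with $\chi_Y=\chi$. Factoring the theta kernel along the polarisation $W_a = X_a \perp X'$ and integrating against $\phi$ on the $G(X')$ factor then identifies the residue with a theta series on $G(X_a)$ paired with the theta lift $\theta^Y_{\psi,X}(\phi,\Phi_2)$. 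Non-vanishing of the residue therefore forces $\theta^Y_{\psi,X}(\sigma) \neq 0$ for some $Y$ with $\dim Y = 2j$ and $\chi_Y=\chi$, which yields $\LO_{\psi,\chi}(\sigma) \le 2j$.

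For Part (3), the plan is to argue by contradiction using Rallis' tower property and the Rallis inner product formula. Assume that $\LO_{\psi,\chi}(\sigma) = \dim Y_0 < r_X = \tfrac{m}{2}$ for some quadratic space $Y_0$ with $\chi_{Y_0}=\chi$. Then $Y_0$ lies strictly inside the first-term range of the regularised Siegel--Weil formula for the dual pair $(G(X), G(Y_0))$, where the regularisation is trivial and the theta integral converges absolutely. The Rallis inner product formula in this range expresses a non-zero inner product of theta lifts of $\sigma$ in terms of a special value of the doubling $L$-function $L^S(s,\sigma\times\chi)$ at a point $s_1 = \half(m+2) - \half \dim Y_0 > s_0$. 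A pole of $L^S$ at $s_1$ would then, by Proposition~\ref{prop:L-pole-Eis-pole}, force a pole of $E^{Q_1}(g,s,f)$ at $s_1 > s_0$, contradicting the maximality of $s_0$ in $\cP_1(\sigma,\chi)$. Hence $\LO_{\psi,\chi}(\sigma) \ge r_X$, and combining with Part (2) yields $2j \ge r_X$.

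The hardest step will be Part (3). The precise specification of the first-term range of the regularised Siegel--Weil formula, combined with the exact form of the Rallis inner product formula needed to extract a genuine pole of $L^S$ (and not merely a non-zero special value), is delicate at the ramified and archimedean places and must be invoked carefully. A subsidiary technical point in Part (2) is the rigorous passage from the pole of $E^{P_a}$ to the non-vanishing of the theta lift $\theta^Y_{\psi,X}(\sigma)$, which requires tracking the factorisation of the theta kernel along the doubling polarisation and matching it with the section $F_{\phi,s}$ produced above, so that the non-vanishing of the residue is detected on the correct Witt tower rung.
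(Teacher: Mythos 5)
Parts (1) and (2) of your plan match the paper's argument closely: (1) is read off from Proposition~\ref{prop:pole-in-P1}, and (2) proceeds exactly as the paper does, by propagating the pole from $\cP_1$ to $\cP_a$, transferring it to the Siegel Eisenstein series $E^{P_a}$ on $G(W_a)$ via \eqref{eq:EPa-EQa}, invoking the regularised Siegel--Weil formula to write the residue as a sum of (regularised) theta integrals over quadratic spaces $Y$ with $\dim Y = 2j$ and $\chi_Y=\chi$, and then separating variables in the theta kernel to recognise a non-vanishing theta lift of $\sigma$ to some $G(Y)$. Those two parts are fine.

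Part (3) is where you diverge from the paper, and where the genuine gap lies. The paper's argument for (3) is purely theta-theoretic and avoids the Rallis inner product formula entirely: it sets $\pi = \theta_{\psi,X}^Y(\sigma)$ at the first occurrence with $\dim Y = 2j'$, uses that $\pi$ is cuspidal (first occurrence) and irreducible (M\oe glin), applies the general tower bound $\FO_{\psi^{-1},Y}^X(\pi)\le 2\dim Y=4j'$ for a cuspidal $\pi$ on $G(Y)$, and finally uses involutivity $\theta_{\psi^{-1},Y}^X(\pi)=\sigma$ together with the cuspidality of $\sigma$ to force the first occurrence of $\pi$ in the symplectic tower to be exactly $\dim X=m$; combining gives $m\le 4j'\le 4j$, i.e.\ $2j\ge r_X$. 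By contrast, your plan assumes $\LO_{\psi,\chi}(\sigma)=\dim Y_0 < r_X$ and tries to derive a contradiction from the Rallis inner product formula, but the plan as stated does not close. The RIP formula in the form you invoke yields non-vanishing of a value $L^S(s_1,\sigma\times\chi)\ne 0$, not a pole; your sentence ``A pole of $L^S$ at $s_1$ would then $\ldots$'' is a hypothetical you have not established, so the contradiction never materialises. You in fact flag this yourself in the final paragraph (``a genuine pole of $L^S$ (and not merely a non-zero special value)''), but the plan contains no mechanism to bridge that gap.

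Two further problems compound this. First, for $\dim Y_0 < r_X = m/2$, the Siegel--Weil point on the doubled group $\Sp(W)$, $\dim W=2m$, sits well \emph{below} the unitary axis: in the Shahidi normalisation $s_{Y_0}=\tfrac{1}{2}(\dim Y_0-m-1)<0$. This is not the ``first-term range where the regularisation is trivial and the theta integral converges absolutely''; unless $Y_0$ is anisotropic, the Weil convergence criterion fails, and one is squarely in the regime of the second-term identity. The relevant results of Yamana and Gan--Qiu--Takeda do tie non-vanishing of the lift to poles of the $L$-function in this low-rank range, but they do so through the regularised second-term identity, not the absolutely convergent formula you describe. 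Second, even granting a pole of $L^S$ at $s_1=\tfrac{1}{2}(m+2)-\tfrac{1}{2}\dim Y_0$, the contradiction requires $s_1>s_0$, i.e.\ $\dim Y_0<2j$; in the boundary case $\LO_{\psi,\chi}(\sigma)=2j$, you get $s_1=s_0$ and no contradiction, so the bound $\LO\ge r_X$ does not follow from this route without additional input. The involutivity argument the paper uses sidesteps all of this by never leaving the theta-correspondence world.

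Summary: (1) and (2) are essentially the paper's proof. For (3), replace the RIP-based contradiction with the paper's argument via cuspidality and irreducibility of the first-occurrence lift $\pi$, the tower bound $\FO_{\psi^{-1},Y}^X(\pi)\le 2\dim Y$, and the involutivity $\theta_{\psi^{-1},Y}^X\circ\theta_{\psi,X}^Y=\mathrm{id}$ on first-occurrence cuspidal representations. That route is self-contained and avoids the pole-versus-value and convergence-range issues that make your version incomplete.
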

\begin{proof} The first part is just Prop.~\ref{prop:pole-in-P1}. For
the second part we make use of the Siegel-Weil formula.  Most of the
ingredients can be found from \cite{MR1289491}. Part of the theorem
was proved in \cite{MR1473165}. See also \cite{MR3435720}. We give
full details only for the part that is new.

Assume that $s_0=\half (m+2)-j$ is the maximal element in $\cP_1(\chi,\sigma)$.
Then by Prop.~\ref{prop:P1-pole-Pa-pole} and
\eqref{eq:EPa-EQa} we find that $s_0+ \half (a-1) \in \cP_a (\chi,\sigma)$.
Thus there exists a section of $\cA_a (s,\chi,\sigma)$
of the form $F_{\phi,s}$ as in \eqref{eq:F_phi_s} such that $E^{Q_a}
(s,g,F_{\phi,s})$ has a pole at $s=s_0+ \half (a-1)$. Taking $a$ large
so that $\dim Y < m+a+1$, since the residue of the Siegel Eisenstein
series is square-integrable, we get from the Siegel-Weil formula
\cite{MR1289491,MR1863861}:
  \begin{equation*} \res_{s=s_0+\half (a-1)} E^{P_a} (\iota
(g_a,g),s,F_s) = \sum_{\substack{Y: \dim Y =2j\\\chi_Y = \chi}} c_Y
\int_{[G (Y)]} \theta_{\psi} (\iota (g_a,g),h,\omega_{\psi}
(\alpha_Y)\Phi_Y)dh
  \end{equation*} where $\alpha_Y$ is some element in the local Hecke
algebra of $G (W_a)$ at a good place that is used to regularise the
integral, $c_Y$ is some non-zero constant determined by $\alpha_Y$ and
$\Phi_Y$ is some $K_{G (W_a)}$-finite Schwartz function in $\cS_{W_a,Y}
(\A)$.  By integrating both sides over $[G (X)]$ against $\phi (g)$,
we obtain that the left-hand side then becomes the residue at
$s=s_0+\half (a-1)$ of $E^{Q_a} (g_a,s,F_{\phi,s})$. As this is
non-zero, at least one term on the right-hand side is non-zero. In
other words there exists a quadratic space $Y$ of dimension $2j$ and
character $\chi$ such that
\begin{equation*} \int_{[G (X)]} \int_{[G (Y)]} \theta_{\psi} (\iota
(g_a,g),h,\omega_{\psi} (\alpha_Y)\Phi_Y) \phi (g) dh dg\neq 0.
\end{equation*} We may assume that $\omega_{\psi} (\alpha_Y)\Phi_Y =
\Phi_Y^{(1)} \otimes \Phi_Y^{(2)}$ for $\Phi_Y^{(1)} \in \cS_{X_a,Y}
(\A)$ and $\Phi_Y^{(2)} \in \cS_{X,Y} (\A)$. Then after separating
variable we get
\begin{align*} &\int_{[G (X)]} \int_{[G (Y)]} \theta_{\psi,X_a,Y}
(g_a,h,\Phi_Y^{(1)})\theta_{\psi,X,Y} (g,h,\Phi_Y^{(2)})\phi (g)
dhdg\\ =&\int_{[G (Y)]} \theta_{\psi,X_a,Y}
(g_a,h,\Phi_Y^{(1)})\int_{[G (X)]} \theta_{\psi,X,Y}
(g,h,\Phi_Y^{(2)})\phi (g) dg dh.
\end{align*} Thus the inner integral, which is exactly the theta lift
of $\phi \in \sigma$ to $G (Y)$, is non-vanishing. This concludes the
proof of the second part.

For the third part, assume that the lowest occurrence is realised in
the Witt tower of $Y$ so that $\FO_\psi^Y (\sigma) = 2j'$ for some $j'
\le j$. We may assume $\dim Y = 2j'$. Let $\pi = \theta_{\psi,X}^Y
(\sigma)$. Then it is cuspidal as this is the first occurrence. It is
irreducible by \cite [Th\' eor\`eme] {MR1473166}. It is known that
$\FO_{\psi^{-1},Y}^X (\pi) \le 4j'$. By the involutive property in
\cite{MR1473166} we have
$\theta_{\psi^{-1},Y}^X (\pi) =
\theta_{\psi^{-1},Y}^X (\theta_{\psi,X}^Y (\sigma)) = \sigma$.
Thus $m\le 4j' \le 4j$.
\end{proof}
\begin{thm} Let $\sigma\in \cA_\cusp (G (X))$ and $s_0$ be the maximal
element in $\cP_1 (\chi,\sigma)$. Then the following hold.
  \begin{enumerate}
  \item Assume that the partial $L$-function $L^S (s,\sigma\times
\chi)$ has a pole at $s=\half (m+2)-j>0$. Then $\LO_{\psi,\chi}
(\sigma) \le 2j$.
  \item If $\LO_{\psi,\chi} (\sigma) =2j < m+2$, then $L^S
(s,\sigma\times\chi)$ is holomorphic for $\Re s > \half (m+2)-j$.
  \item If $\LO_{\psi,\chi} (\sigma) =2j \ge m+2$, then $L^S
(s,\sigma\times\chi)$ is holomorphic for $\Re s \ge \half $.
  \end{enumerate}
\end{thm}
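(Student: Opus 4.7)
My plan is to leverage the two inputs already established: Proposition~\ref{prop:L-pole-Eis-pole}, which promotes the rightmost pole of the partial $L$-function into an element of $\cP_1(\sigma,\chi)$, together with Theorem~\ref{thm:Eis-pole-LO}, which controls $\LO_{\psi,\chi}(\sigma)$ in terms of the maximal member of $\cP_1(\sigma,\chi)$. Throughout I use the standard fact that in the region $\Re s > 0$ the partial $L$-function $L^S(s,\sigma\times\chi)$ has only finitely many poles and that those with $\Re s > \half$ are all real.

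For part (1), I assume $L^S(s,\sigma\times\chi)$ has a pole at $s_0 = \half(m+2)-j > 0$ and take $s_* \ge s_0$ to be the rightmost real pole, so $L^S$ is holomorphic for $\Re s > s_*$. Proposition~\ref{prop:L-pole-Eis-pole} places $s_*$ in $\cP_1(\sigma,\chi)$; in particular the maximal member $s_{\max}$ of $\cP_1(\sigma,\chi)$ satisfies $s_{\max} \ge s_* \ge s_0$. Theorem~\ref{thm:Eis-pole-LO}(1) writes $s_{\max} = \half(m+2) - j_{\max}$ for an integer $0 \le j_{\max} < \half(m+2)$, and $s_{\max} \ge \half(m+2)-j$ forces $j_{\max} \le j$; Theorem~\ref{thm:Eis-pole-LO}(2) then yields $\LO_{\psi,\chi}(\sigma) \le 2j_{\max} \le 2j$. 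Part (2) is the contrapositive: if $\LO_{\psi,\chi}(\sigma) = 2j < m+2$ but $L^S$ had a pole with real part strictly greater than $\half(m+2)-j$, rerunning the argument of part (1) with $s_0$ replaced by this pole would produce $j_{\max} < j$, and hence $\LO_{\psi,\chi}(\sigma) \le 2j_{\max} < 2j$, contradicting the hypothesis.

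For part (3), the hypothesis $\LO_{\psi,\chi}(\sigma) \ge m+2$ forces $\cP_1(\sigma,\chi) = \varnothing$: any maximal element would, by Theorem~\ref{thm:Eis-pole-LO}(1)--(2), satisfy $\LO_{\psi,\chi}(\sigma) \le 2j_{\max} < m+2$. Emptiness of $\cP_1(\sigma,\chi)$, combined with Proposition~\ref{prop:L-pole-Eis-pole} applied to any putative rightmost real pole, rules out any pole of $L^S(s,\sigma\times\chi)$ in the open half-plane $\Re s > \half$. The main remaining obstacle is the line $\Re s = \half$ itself, which lies outside the scope of Proposition~\ref{prop:L-pole-Eis-pole} (whose hypothesis requires $s_0 > \half$ strictly). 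To close this gap I would combine the functional equation $s \leftrightarrow 1-s$ for $L^S(s,\sigma\times\chi)$ with the Rallis tower and Siegel--Weil arguments already used in the proof of Theorem~\ref{thm:Eis-pole-LO}: a pole at $s = \half$ would be forced to come from a simple global Arthur factor of the form $(\chi,2)$ in the parameter of $\sigma$, which in turn would entail a non-vanishing theta lift of $\sigma$ to a $2$-dimensional quadratic space of character $\chi$, contradicting $\LO_{\psi,\chi}(\sigma) \ge m+2$.
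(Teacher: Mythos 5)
Your treatment of parts (1) and (2) matches the paper's essentially line for line: pass to the rightmost pole of $L^S(s,\sigma\times\chi)$, feed it into Proposition~\ref{prop:L-pole-Eis-pole} to land in $\cP_1(\sigma,\chi)$, then invoke Theorem~\ref{thm:Eis-pole-LO} to bound $\LO_{\psi,\chi}(\sigma)$; part~(2) is the contrapositive in both cases. For part~(3) your reorganization --- first show $\cP_1(\sigma,\chi)=\varnothing$ from $\LO_{\psi,\chi}(\sigma)\ge m+2$, then exclude poles on $\Re s>\half$ --- is logically equivalent to the paper's move of applying part~(1) to a hypothetical pole at $s=\half(m+2)-j_0$ with $j_0\le\half(m+1)$ and reading off $\LO_{\psi,\chi}(\sigma)\le m+1$.

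Where you deviate, and usefully so, is in flagging the boundary $\Re s=\half$. You are right that Proposition~\ref{prop:L-pole-Eis-pole} is stated only for $s_0>\half$, so the chain ``$L^S$-pole $\Rightarrow$ Eisenstein pole $\Rightarrow$ $\cP_1$ nonempty'' does not immediately cover a pole sitting exactly at $s=\half$. The paper's own proof does not discuss this; it simply feeds the pole into part~(1), whose stated hypothesis is $s>0$ but whose proof itself invokes Proposition~\ref{prop:L-pole-Eis-pole} and hence carries the same implicit $>\half$ restriction. So the gap you notice is present in the source as well, not something you have introduced. However, the fix you sketch does not close it. The functional equation holds for the completed $L$-function and does not by itself rule out a pole at the center of symmetry $s=\half$; and the claim that such a pole ``would be forced to come from a simple global Arthur factor $(\chi,2)$'' presupposes precisely the tight correspondence between poles of $L^S(s,\sigma\times\chi)$ and the Arthur parameter of $\sigma$ which the introduction of this paper explicitly describes as open in the non-generic case. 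If you want to settle $\Re s=\half$ rigorously within the paper's toolkit, you would either need to strengthen Proposition~\ref{prop:L-pole-Eis-pole} to allow $s_0=\half$ (by reexamining the normalizing factor from the Gindikin--Karpelevich computation at that point), or else cite an independent result bounding the real parts of the poles of $L^S(s,\sigma\times\chi)$ away from $\half$; as written, your appeal to ``Rallis tower and Siegel--Weil arguments'' is too vague to count as a proof.
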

\begin{proof} This theorem is a consequence of
Thm.~\ref{thm:Eis-pole-LO} and
Prop.~\ref{prop:L-pole-Eis-pole}. Assume that $L^S (s,\sigma\times
\chi)$ has a pole at $s=\half (m+2)-j>0$. Let $\half (m+2)-j'$ be its
maximal pole. Then $s=\half (m+2)-j'$ is a pole of the Eisenstein
series $E^{Q_1}$. Let $\half (m+2)-j''$ be the maximal element of
$\cP_1 (\chi,\sigma)$. We have $j'' \le j' \le j$. By
Thm.~\ref{thm:Eis-pole-LO}, the lowest occurrence $\LO_{\psi,\chi}
(\sigma)\le 2j''\le 2j$.

Assume that $\LO_{\psi,\chi} (\sigma) =2j < m+2$ and $L^S
(s,\sigma\times\chi)$ is not holomorphic for $\Re s > \half
(m+2)-j$. This means that $L^S (s,\sigma\times\chi)$ has a pole at $s
= \half (m+2)-j_0$ for some $j_0 < j$. Part (1) implies that
$\LO_{\psi,\chi} (\sigma) \le 2j_0$ which is strictly less than
$2j$. We get a contradiction.

Assume that $L^S (s,\sigma\times\chi)$ is not holomorphic for $\Re s
\ge \half $. Then $L^S (s,\sigma\times\chi)$ has a pole at $s = \half
(m+2)-j_0$ for some $j_0 \le \half (m+1)$. Then from Part (1), we get
that $\LO_{\psi,\chi} (\sigma) \le 2j_0 \le m+1$.
\end{proof}


\section{Periods and Main Results}
\label{sec:periods}

As noted in the proof of Thm.~\ref{thm:Eis-pole-LO}, for $\sigma\in\CA_\cusp(G(X))$,
the first occurrence of $\sigma$ under the theta correspondence to the
orthogonal group is irreducible and cuspidal. Then by the theory of Li on
the non-singularity of cuspidal automorphic representations
(\cite{MR1168122}), there exists at least one non-zero Fourier
coefficient of maximal rank for the first occurrence representation of
$\sigma$ under the theta correspondence. An explicit calculation of
such a Fourier coefficient leads to our definition of a new family of
period integrals on $\sigma$. Note that the period integrals are
similar to what we defined in \cite{MR3435720}. However they are
different from what was introduced in \cite{MR2540878} due to the presence of theta twist in general.

In this section, for easier indexing, the quadratic space $Y$ is assumed to be anisotropic
so that it sits at the bottom of its Witt tower.

\begin{defn} Let $G$ be a reductive group and $J$ be a subgroup of $G
$. Let $\sigma$ be an automorphic representation of $G$. For
$f\in\sigma$, if the period integral
\begin{equation} \int_{[J]} f (g)dg.
    \end{equation} is absolutely convergent and non-vanishing, we
say that $f$ is $J$-distinguished.  Assume that for all $f\in \sigma$,
the period integral is absolutely convergent.  If there exists $f\in\sigma$ such that $f$ is
$J$-distinguished, then we say $\sigma$ is
$J$-distinguished. If for all $f\in\sigma$,  $f$ is not
$J$-distinguished, then we say $\sigma$ is not
$J$-distinguished.
\end{defn}

\subsection{Fourier Coefficients and Periods}
\label{sec:fourier-coeff-period} 
Let $\Theta_{\psi,X,Y}$ denote the
space of theta functions $\theta_{\psi,X,Y} (\cdot,1,\Phi)$ for $\Phi$
running over $\cS_{X,Y} (\A)$. Let $Z$ be a non-degenerate symplectic subspace of $X$. Then $G(Z)$ is the symplectic group of $Z$.
Let $L$ be a totally isotropic subspace of $Z$. Let $Q(Z,L)$ be the parabolic subgroup of $G(Z)$ that stabilises $L$. Define $J (Z,L)$ to be the Jacobi subgroup of $Q (Z,L)$ that fixes $L$ element-wise. Assume that $L^+$ is any totally isotropic subspace of $Z$ that is dual to $L$ and that a subspace $W \subset Z$ is such that $Z = L^+ \oplus W \oplus L$. Then with respect to the `basis' $L^+, W, L$, we have the following description in terms of  block matrices:
\begin{equation*}
  Q(Z,L) = \left\{\begin{pmatrix}
    * & * & *\\
      & * & *\\
      &   & *
  \end{pmatrix} \in G(Z) \right\},\quad
  J(Z,L) = \left\{\begin{pmatrix}
    I & * & *\\
      & * & *\\
      &   & I
  \end{pmatrix} \in G(Z) \right\}.
\end{equation*}
In our convention the $0$ space is both totally isotropic and anisotropic. We allow $L$ to be the $0$ space.  In this case  $J (Z,L) = G (Z)$. Note that the groups act on the vector space $Z$ from the right.
With this set-up we have the following:

\begin{prop}\label{prop:FO->period-from-fourier-coeff} Let
$\sigma\in\CA_\cusp(G(X))$ and let $Y$ be a (possibly trivial) anisotropic
quadratic space. Assume that
$\FO_{\psi}^{Y} (\sigma) = \dim Y + 2r_0$.
Let $Z$ be a non-degenerate symplectic subspace of
$X$ and $L$ a totally isotropic subspace of $Z$. Let $r$ be a
non-negative integer. Then the following hold.
  \begin{enumerate}
    \item If $\dim X - \dim Z + \dim L + r < r_0$, then $\sigma
\otimes \Theta_{\psi,X,Y_r}$ is not $J (Z,L)$-distinguished for any choice of $(Z,L)$.
    \item If $\dim L =0, 1$ and $\dim X - \dim Z + \dim L= r_0$, then
$\sigma \otimes \Theta_{\psi,X,Y}$ is $J (Z,L)$-distinguished for some choice of $(Z,L)$.
  \end{enumerate}
\end{prop}

\begin{proof} We will suppress the subscript $\psi$ and taking $F$-rational
points unless there is confusion. For example $G (X) (F)$ will simply be $G (X)$. Let $\phi\in\sigma$ and $\Phi\in\cS_{X,Y_r} (\A)$. The periods under consideration are
  \begin{equation}\label{eq:period-sigma-theta}
    \int_{[J (Z,L)]}\phi (g) \theta_{X,Y_r} (g,1,\Phi) dg.
  \end{equation}
Since $\phi$ is a cuspidal automorphic form and the
theta series is of moderate growth, the whole integrand is rapidly
decreasing on the Siegel domain of $G (X)$. The Jacobi group $J (Z,L)$
is a semi-direct product of some symplectic subgroup $G (Z')$ of $G
(X)$ and a unipotent group. It can be seen that the Siegel domain of
$G (Z')$ is contained some Siegel domain of $G (X)$. Thus the period
integrals are all absolutely convergent.

 Recall that $\theta_X^{Y_r} (h, \phi, \Phi)$ denotes the theta
lift in \eqref{eq:theta-lift-defn} for $\phi\in \sigma$ and $\Phi\in
\cS_{X,Y_r} (\A)$. We consider its $\beta$-th Fourier coefficients
$\theta_{X,\beta}^{Y_r}(h, \phi, \Phi)$ for $\beta \in \Alt_a $ for
$a\le r$ of the form
$\smatrix{\beta_0}{0}{0}{0} $,
where  $\beta_0\in\Alt_b$ is a non-degenerate element. Note that $b$
must be even. Let $n_a$ denote the homomorphism:
\begin{equation*}
  \Alt_a \rightarrow G (Y_r), \ \ \ \
  z \mapsto
  \begin{pmatrix} I_a&0&z\\ &I &0 \\ && I_a
  \end{pmatrix}.
\end{equation*}
Then we have
\begin{equation}\label{eq:beta-Fourier-coeff-of-theta-lift}
\theta_{X,\beta}^{Y_r} (h,\phi,\Phi) = \int_{[\Alt_a]}\int_{[G (X)]}
\phi (g)\theta_X^{Y_r} (g,n_a (z)h,\Phi)\overline{\psi (\tr (\beta
z))}dgdz .
\end{equation}
In order to compute $\theta_X^{Y_r} (g,n_a (z)h,\Phi)$ we
realise the Weil representation on $\cS ((
\ell_a^-\otimes X \oplus Y_{r-a}\otimes X^+) (\A))$. Then
\begin{align*} \theta_X^{Y_r} (g,n_a (z)h,\Phi) =& \sum_{u,v,w}
\omega_{X,Y_r} (g,n_a (z)h)\Phi (u,v,w)
\end{align*} where the summation runs over $u\in \ell_a^- \otimes
X^+$, $v\in \ell_a^- \otimes X^- $ and $w\in Y_{r-a} \otimes
X^+$. Writing out the action explicitly, we find that each term is equal to
\begin{align*} \psi (\tr (zu \trpz {v}))\omega_{X,Y_r} (g,h)\Phi
(u,v,w).
\end{align*} Thus integration over $dz$ in
\eqref{eq:beta-Fourier-coeff-of-theta-lift} vanishes unless $u\trpz
{v} = \beta$, which implies that
$\form {x} {x}_X = x
  \begin{pmatrix} & I\\-I &
  \end{pmatrix}\trpz {x} = 2\beta$, 
if we set $x= (u, v) \in \ell_a^- \otimes X$. Thus
\eqref{eq:beta-Fourier-coeff-of-theta-lift} is equal to
\begin{equation*} \int_{[G (X)]} \sum_{\substack{x,w\\ \form {x} {x}_X
= 2\beta}} \phi (g)\omega_{X,Y_r} (g,h)\Phi (x,w) dg
\end{equation*} where $x$ runs over $\ell_a^- \otimes X$ and $w$ over
$ Y_{r-a} \otimes X^+$. Write
$
x = \bigl(
\begin{smallmatrix}
  x_0 \\ x_1
\end{smallmatrix}
\bigr)
$
for $x_0 \in \ell_b^- \otimes X$ and $x_1\in
\ell_{a-b}^- \otimes X$. Thus we need $\form {x_0} {x_0}_X = \beta_0$,
$\form {x_0} {x_1}_X=0$ and $\form {x_1} {x_1}_X=0$. We note that
$x_0$'s form one orbit under action of $G (X)$, that $\Span\{x_1\}$ is
a totally isotropic subspace contained in the orthogonal complement of
$\Span\{x_0\}$ and that the $G (X)$-orbits of $\Span\{x_1\}$'s are
parametrised by dimension. We fix one $x_0 \in \ell_b^-\otimes X$ such
that $\form {x_0} {x_0}_X = \beta_0$ and $x_1^{(i)} \in
\ell_{a-b}^-\otimes X$ of rank $i$ such that $\form {x_0}
{x_1^{(i)}}_X=0$ and $\form {x_1 ^{(i)}} {x_1^{(i)}}_X=0$ for $i=0,\ldots,
\min\{(\dim X - b)/2, a-b\}$. To parametrise $x_1$ rather than
$\Span\{x_1\}$ we need to apply $\GL_{a-b}$-action on the left. The
stabiliser in $G (X)$ of $x_0$ and $\Span\{x_1^{i}\}$ is $J
(\{x_0\}^{\perp}, x_1^{(i)})$ where $\{x_0\}^{\perp}$ denotes the
orthogonal complement of $\Span\{x_0\}$ in $X$ and to avoid clutter we
write $x_1^{(i)}$ rather than $\Span\{x_1^{(i)}\}$ here. Thus
\eqref{eq:beta-Fourier-coeff-of-theta-lift} is equal to
\begin{align*} &\int_{[G (X)]} \sum_i \sum_\delta \sum_\gamma
\sum_{w}\phi (g)\omega_{X,Y_r} (g,h)\Phi (
 \begin{pmatrix}
x_0 \\ \delta^{-1}x_1^{(i)}
 \end{pmatrix}
\gamma, w)dg \\ =& \int_{[G (X)]} \sum_i \sum_\delta \sum_\gamma
\sum_w \phi (g)\omega_{X,Y_r} (\gamma g,m_{G (Y_r),a-b} (\delta)
h)\Phi (
  \begin{pmatrix} x_0 \\ x_1^{(i)}
  \end{pmatrix}, w)dg\\
\end{align*} where $w$ runs over $Y_{r-a}\otimes X^+$, $i$ over
$\{0,\ldots, \min\{(\dim X - b)/2, a-b\}\}$, $\delta$ over
$(\Stab_{\GL_{a-b}} x_1^{(i)}G (X)) \lmod \GL_{a-b}$ and $\gamma$ over
$J (\{x_0\}^{\perp}, x_1^{(i)}) \lmod G (X)$. Here $\Stab_{\GL_{a-b}}
x_1^{(i)}G (X)$ denotes the stabiliser in $\GL_{a-b}$ of the $G
(X)$-orbit of $x_1^{(i)}$ and we set
\begin{equation*}
m_{G (Y_r),a-b} : \GL_{a-b} \rightarrow G (Y_r),\ \ \ \
\delta \mapsto\diag(I_b,\delta,I,\delta^*,I_b)
\end{equation*}
with $\delta^*$ determined by $\delta$ via the quadratic form
on $Y_r$. We may freely change order of summations and integration
because of absolute convergence. We collapse the sum over $\gamma$ and
the integration to get
\begin{align} & \sum_i \sum_\delta \int_{J (\{x_0\}^{\perp},
x_1^{(i)}) \lmod G (X) (\A)} \phi (g) \sum_w \omega_{X,Y_r} ( g,m_{G
(Y_r),a-b} (\delta) h)\Phi (
   \begin{pmatrix} x_0 \\ x_1^{(i)}
 \end{pmatrix}, w) dg\nonumber\\
\label{eq:sum-of-periods} =& \sum_i \sum_\delta \int_{J
 (\{x_0\}^{\perp}, x_1^{(i)}) (\A) \lmod G (X) (\A)} \int_{[J
(\{x_0\}^{\perp}, x_1^{(i)}) ]} \phi (g' g)\\ &\qquad
\theta_{X,Y_{r-a}} ( g' , 1, \omega_{X,Y_r} (g, m_{G (Y_r),a-b}
(\delta) h)\Phi (
    \begin{pmatrix} x_0 \\ x_1^{(i)}
    \end{pmatrix}, \cdot))dg' dg.\nonumber
\end{align}

We separate the discussion for the case of $r<r_0$ and the case of $r=r_0$.

First, suppose $r<r_0$. Then \eqref{eq:sum-of-periods} is identically
$0$, because it is a Fourier coefficient of an earlier occurrence than the
first occurrence. We use induction on $\dim L$ to show part (1). Assume that $b=a\le
r$. In this case $a$ is forced to be even. Then \eqref{eq:sum-of-periods} being equal to $0$ simply means
\begin{align*} 0=& \int_{J (\{x_0\}^{\perp}, 0) (\A)\lmod G (X) (\A)}
\int_{[J (\{x_0\}^{\perp}, 0)]} \phi (g' g) \theta_{X,Y_{r-a}} (g'
,1, \omega_{X,Y_r} ( g, h)\Phi ( x_0 , \cdot))dg' dg.
\end{align*} 

Consider the inner integral 
$\int_{[J (\{x_0\}^{\perp}, 0)]} \phi (g' )
\theta_{X,Y_{r-a}} (g' ,1, \Psi)dg'$, 
where $\Psi \in \cS_{X,Y_{r-a}} (\A)$. For all choice
of data, the above must vanish. Otherwise we can choose $\Psi' \in \cS
(\ell_a^-\otimes X (\A))$ supported very close to $x_0$ at ramified places such that
\begin{multline*} \int_{J (\{x_0\}^{\perp}, 0) (\A)\lmod G (X) (\A)}
\int_{[J (\{x_0\}^{\perp}, 0)]} \phi (g' g) \\\theta_{X,Y_{r-a}} (g'g
,1, \omega_{X,\ell_a^+\oplus \ell_a^-} (g,1)\Psi' (x_0) \Psi (\cdot))dg' dg
\end{multline*} is non-vanishing, which is a contradiction. Letting $a$ and
$\beta_0$ vary as long as $b=a\le r < r_0$, we get that
$\int_{[J (Z, L)]} \phi (g' ) \theta_{X,Y_t} (g' ,1,
\Psi)dg' = 0$, 
for all choice of data as long as $\dim L=0$ and $\dim
X -\dim Z +\dim L +t <r_0$.

Assume that
$\int_{[J (Z, L)]} \phi (g' ) \theta_{X,Y_t} (g' ,1,
\Psi)dg' = 0$, 
for all choice of data whenever $\dim L < n_0$ and
$\dim X -\dim Z +\dim L +t <r_0$.  We consider the case when $a-b =
n_0$. Then those terms in \eqref{eq:sum-of-periods} corresponding to
$i < n_0$ must vanish by the induction hypothesis. Also the set
$\Stab_{\GL_{a-b}} x_1^{(n_0)}G (X) \lmod \GL_{a-b}$ that $\delta$ runs
over is trivial. Thus there is only one term left:
\begin{multline*} \int_{J (\{x_0\}^{\perp}, x_1^{(a-b)}) (\A) \lmod G
(X) (\A)} \int_{[J (\{x_0\}^{\perp}, x_1^{(a-b)}) ]} \phi (g' g)\\
\theta_{X,Y_{r-a}} ( g' , 1, \omega_{X,Y_r} (g, h)\Phi (
    \begin{pmatrix} x_0 \\ x_1^{(a-b)}
    \end{pmatrix}, \cdot))dg' dg.
\end{multline*} By a similar argument as in the case $a=b$, the inner
integral must vanish.  Thus for all choice of data, the integral
$\int_{[J (Z,L) ]} \phi (g') \theta_{X,Y_t} ( g', 1,
\Psi)dg'$
must vanish whenever $\dim L = n_0$ and $\dim X -\dim
Z +\dim L +t <r_0$. We conclude the proof for part (1).

Now we consider the case  $r=r_0$ to prove part (2). In this case the theta lift
does not vanish for some choice of $\phi\in\sigma$ and
$\Phi\in\cS_{X,Y_r}(\A)$. We take $\beta$-th Fourier coefficients for $\beta\in \Alt_{r_0}$. In other words, $a=r_0$. By the Main Theorem of \cite{MR1168122}
there exists $\beta$ of maximal possible rank such that the $\beta$-th
Fourier coefficient does not vanish. When $r_0$ is even, $\beta$ is of
rank $r_0$. There is only one term left in \eqref{eq:sum-of-periods}
corresponding to $i=0$:
\begin{equation}\label{eq:i=0-term-in-fourier-coeff-of-theta-lift}
  \int_{J (\{x_0\}^{\perp}, 0) (\A) \lmod G (X) (\A)} \int_{[J
(\{x_0\}^{\perp}, 0) ]} \phi (g' g) \theta_{X,Y} ( g' , 1,
\omega_{X,Y_r} (g, h)\Phi (x_0, \cdot))dg' dg
\end{equation}
and it does not vanish.
Note that there is no summation over $\delta$ and  that the inner integral is of the  form: $\int_{[J (Z,0) ]} \phi (g') \theta_{X,Y} ( g',
1,\Psi) dg'$ for some choice of $\phi$ and $\Psi$. As  \eqref{eq:i=0-term-in-fourier-coeff-of-theta-lift} does not vanish, we
conclude that $\int_{[J (Z,0) ]} \phi (g') \theta_{X,Y} ( g',
1,\Psi) dg'$ 
does not vanish for some choice of data.
When $r_0$ is odd, $\beta$ is of rank $r_0-1$. There
are only two terms left in \eqref{eq:sum-of-periods} corresponding to $i=0$ and
$1$:
\begin{align}\label{eq:i=1-term-in-fourier-coeff-of-theta-lift}
  &\int_{J (\{x_0\}^{\perp}, 0) (\A) \lmod G (X) (\A)} \int_{[J
(\{x_0\}^{\perp}, 0) ]} \phi (g' g) \theta_{X,Y} ( g' , 1,
    \omega_{X,Y_r} (g, h)\Phi (x_0, \cdot))dg' dg\\
+& \int_{J (\{x_0\}^{\perp}, x_1^{(1)}) (\A) \lmod G (X) (\A)} \int_{[J
   (\{x_0\}^{\perp}, x_1^{(1)}) ]} \phi (g' g)\nonumber\\
  &\qquad\qquad\qquad\theta_{X,Y} ( g' , 1,
\omega_{X,Y_r} (g, h)\Phi (
    \begin{pmatrix} x_0 \\ x_1^{(1)}
    \end{pmatrix}, \cdot))dg' dg
  \nonumber
\end{align}
and this sum does not vanish. For the term corresponding to $i=1$, in fact, there should be a summation over $\delta \in \Stab_{\GL_{1}} x_1^{(1)}G (X) \lmod \GL_{1}$, but one can check that this set has only one element which is the identity element.
Now we look at the two terms.
 The first has an inner integral of the form
$\int_{[J (Z,0) ]} \phi (g') \theta_{X,Y} ( g',
1,\Psi) dg'$
with $\dim X - \dim Z = r_0 -1 < r_0$ and thus must
vanish by part (1). Thus the term corresponding to $i=1$ must be
non-vanishing. This has an inner integral of the form
$\int_{[J (Z,L) ]} \phi (g') \theta_{X,Y} ( g', 1,
\Psi)dg'$ with $\dim L =1$ and $\dim X - \dim Z + \dim L =
r_0$. This must be non-vanishing for some choice of data.
\end{proof}

\begin{rmk} If $Y= 0$, the theta series reduces to a
constant. Compare with Prop.~5.2 in \cite{MR2540878}.  When $Z=X$ and $L=0$, the
period integral is just the integral for theta lift of $\sigma$ to $G
(Y_r)$.   In fact the above period integrals can be replaced by
$\int_{[J (Z,L)]} \phi (g') \theta_{\psi,Z,Y}
(g',1,\Psi) dg'$, where $\Psi \in\cS_{Z,Y} (\A)$.
\end{rmk}

There is a certain converse to the previous proposition.
\begin{prop} \label{prop:periods-sigma->FO} Let $\sigma \in \cA_\cusp (G (X))$ and $Y$ be a (possibly trivial) anisotropic quadratic space.
Assume that $\sigma \otimes \Theta_{\psi,X,Y}$ is $J(Z,L)$-distinguished for some $(Z,L)$ with $\dim L =0$ or $1$, but that it is not $J (Z',L')$-distinguished for
all $(Z',L')$ with $\dim L'=0$ or $1$ and $\dim Z' - \dim L' > \dim Z - \dim L$. Let $r=
\dim X- \dim Z +\dim L$. Then $\FO_\psi^Y (\sigma) = \dim Y + 2r$.
\end{prop}
\begin{proof} We will drop the subscript $\psi$ to avoid clutter. Let
$r_0 = (\FO_\psi^Y (\sigma) - \dim Y)/2$. We need to show $r_0 = r$.

Suppose first that $L$ is the zero space. The period integral
corresponding to $J (Z,0)$-distinction is 
$\int_{[J (Z,0) ]} \phi (g') \theta_{X,Y} ( g',
1,\Psi) dg'$.
It occurs as an inner integral of the $\beta$-th Fourier
coefficient of  $\theta_{X}^{Y_r} (h,\phi,\Phi)$ in the space of theta lift of
$\sigma$ to $G (Y_r)$
(c.f. \eqref{eq:i=0-term-in-fourier-coeff-of-theta-lift}) for some
$\beta \in \Alt_r$ of full rank $r$. As in the proof of Prop.~\ref{prop:FO->period-from-fourier-coeff}, for some choice of data, the Fourier coefficient is non-vanishing. Thus $r_0 \le r$.

Suppose second that $L$ is an isotropic line. The period integral
corresponding to $J (Z,L)$-distinction is 
$\int_{[J (Z,L) ]} \phi (g') \theta_{X,Y} ( g',
1,\Psi) dg'$.
This is not directly an inner integral of a certain
Fourier coefficient of a theta lift. Consider the $\beta$-th Fourier
coefficient of theta lift of $\sigma$ to $G (Y_{r})$ where $\beta\in\Alt_r$ is
of the form
$\smatrix{\beta_0}{}{}{0} $
where $\beta_0$ is non-degenerate in
$\Alt_{r-1}$. From \eqref{eq:i=1-term-in-fourier-coeff-of-theta-lift},
we see that our period integral is an inner integral of the second
double integral for some choice of data. The inner integral of the
first double integral is of the form
\begin{equation*} \int_{[J (Z',0) ]} \phi (g') \theta_{X,Y} ( g',
1,\Psi) dg'
  \end{equation*} with $\dim Z' = \dim X - (r -1 )$. Thus $\dim Z'
> \dim Z - \dim L$, so the first term in
\eqref{eq:i=1-term-in-fourier-coeff-of-theta-lift} vanishes by
assumption. The $J (Z,L)$-distinction means that the inner
integral of the second term in
\eqref{eq:i=1-term-in-fourier-coeff-of-theta-lift} is non-vanishing
for some choice of data. By a similar argument as in
Prop.~\ref{prop:FO->period-from-fourier-coeff}, we can further choose
some more data to make the outer integral non-vanishing. This shows
that $r_0 \le r$.

We need to show equality $r_0 =r$. Assume that $r_0 < r $. Then
Prop.~\ref{prop:FO->period-from-fourier-coeff} shows that
$\int_{[J (Z',L') ]} \phi (g') \theta_{X,Y} ( g',
1,\Psi) dg'$
is non-vanishing for some choice of data with $\dim
X - \dim Z' + \dim L' = r_0$. Here $\dim L' = 0$ or $1$ is determined
by parity of $r_0$. Then $\dim Z' - \dim L'=\dim X - r_0 > \dim X -r =
\dim Z -\dim L$, which contradicts our assumption.
\end{proof}

We also get  the following implication.
\begin{prop}
Let $\sigma \in \cA_\cusp (G (X))$ and $Y$ be a (possibly trivial) quadratic space.
Assume that for some $t>0$, $\sigma \otimes \Theta_{\psi,X,Y}$ is not $J (Z',L')$-distinguished for
all $(Z',L')$ with $\dim L'=0$ or $1$ and $\dim Z' - \dim L' > t$. Then $\sigma \otimes \Theta_{\psi,X,Y}$ is not
  $J (Z'',L'')$-distinguished for
all $(Z'',L'')$ with $\dim Z'' - \dim L'' > t$.
\end{prop}
\begin{proof}
Assume that $\FO_\psi^Y (\sigma) = \dim Y + 2 (\dim X -t_0)$.
Then $\sigma \otimes \Theta_{\psi,X,Y}$ is $J (Z,L)$-distinguished for
some $(Z,L)$ with $\dim L=0$ or $1$ and $\dim Z - \dim L = t_0$. Thus $t_0 \le t$. Prop.~\ref{prop:FO->period-from-fourier-coeff} shows that for all $(Z'',L'')$ with  $\dim Z'' - \dim L'' > t_0$, $\sigma \otimes \Theta_{\psi,X,Y}$ is not $J (Z'',L'')$-distinguished. Thus a fortiori, for all $(Z'',L'')$ with  $\dim Z'' - \dim L'' > t$, $\sigma \otimes \Theta_{\psi,X,Y}$ is not $J (Z'',L'')$-distinguished.
\end{proof}


\subsection{The Arthur Truncation Method}
\label{sec:arthur-trunc}

In this section we still assume that $Y$ is anisotropic. Let $Z$ be a
non-degenerate subspace of $X$. Let $V$ denote its orthogonal complement in $X$. Similarly we form the extended space
$Z_1 = \ell_1^+ \oplus Z \oplus \ell_1^-$. Thus $Z_1$ is a
non-degenerate subspace of $X_1$. Let $L$ be a totally isotropic space of $Z$ of dimension $0$ or $1$.
We consider period integrals of the
following form
\begin{equation*} \int_{[J(Z_1, L)]} E^{Q_1} (g,s,f_s)
\theta_{\psi,X_1,Y} (g, 1, \Phi) dg
\end{equation*} for $f_s \in \cA_1 (s,\chi,\sigma)$ and $\Phi \in
\cS_{X_1,Y} (\A)$. The integrals diverge in general, and can be regularised by applying
Arthur truncation\cite{MR558260,MR518111} to the Eisenstein series, for instance. We
will drop the superscript $Q_1$ from now on.

Since the parabolic group $Q_1$ is maximal, $\fa_{M_1}$ is
one-dimensional and is identified with $\R$. Define two functions on
$\R$ as follows. For $c\in \R_{>0}$, set $\hat {\tau}^c$ to be the
characteristic function of $\R_{> \log c}$ and set $\hat {\tau}_c =
1_{\R} - \hat {\tau}^c$. We put in $\log$ to ease later computation.
Then in our special case the Arthur truncation formula has only two
terms
\begin{equation*} \Lambda^c E (g,s,f_s) = E (g,s,f_s) - \sum_{\gamma
\in Q_1 \lmod G (X_1)} E_{Q_1} (\gamma g,s,f_s) \hat {\tau}^c (H
(\gamma g))
\end{equation*} where $E_{Q_1}$ denotes the constant term of the
Eisenstein series along $Q_1$. The summation has only finitely many
terms (depending on $g$).
For $\Re s > \rho_{Q_1}$ we
have
\begin{equation*} E_{Q_1} (g,s,f_s) = f_s (g) + M (w,s)f_s (g),
\end{equation*} where $w $ is the longest Weyl element in $Q_1 \lmod G
(X_1) / Q_1$.  The above identity actually holds for all $s$ as
meromorphic functions.
Thus the truncated Eisenstein series $\Lambda^c E (g,s,f_s)$ equals
\begin{equation*} \sum_{\gamma \in Q_1 \lmod G (X_1)} f_s (\gamma g)
\hat {\tau}_c (H (\gamma g)) - \sum_{\gamma \in Q_1 \lmod G (X_1)} M
(w,s) f_s (\gamma g) \hat {\tau}^c (H (\gamma g)),
\end{equation*} where the first summation is absolutely convergent for
$\Re s > \rho_{Q_1}$ and the second has only finitely many terms. Both
have meromorphic continuation to the whole complex plane.

Set
\begin{align*} \xi_{c,s} (g) = & f_s (g)\theta_{\psi,X_1,Y} (g, 1,
\Phi)\hat {\tau}_c (H (g)) ;\\ \xi^c_s (g) = & M (w,s) f_s
(g)\theta_{\psi,X_1,Y} (g, 1, \Phi)\hat {\tau}^c (H (g)) .
\end{align*} Also set
\begin{align}\label{eq:I-xi-s} I (\xi) =&\int_{[J (Z_1,L)]}
\sum_{\gamma \in Q_1 \lmod G (X_1)} \xi (\gamma g) dg
\end{align} for $\xi= \xi_{c, s}$ or $\xi^c_s$.  Then
\begin{equation*} \int_{[J (Z_1,L)]} \Lambda^c E (g,s,f_s)
\theta_{\psi,X_1,Y} (g, 1, \Phi) dg = I (\xi_{c, s}) - I (\xi^c_s).
\end{equation*} Since the truncated Eisenstein series is rapidly
decreasing, the left-hand side is absolutely convergent. The absolute
convergence of the two terms on the right will be discussed later.

We examine the $G (Z_1)$-orbits in the generalised flag variety $Q_1
\lmod G (X_1)$ which classifies isotropic lines in $X_1$. We use
the correspondence given below:
\begin{align*} Q_1 \lmod G (X_1) &\longleftrightarrow \{\text
{Isotropic lines in } X_1\}\\ \gamma &\longleftrightarrow
\ell_1^-\gamma.
\end{align*} Given an isotropic line $\ell$ in $X_1$ we write
$\gamma_\ell$ for an element in $ G (X_1)$ such that $\ell_1^-
\gamma_\ell = \ell$. In the following we always try to choose
$\gamma_\ell$ as simple as possible to facilitate computation.

 Given an isotropic line in $X_1$ we pick a non-zero vector $x$ on the
line and write $x = v + z$ according to the decomposition
$X_1 = V  \perp Z_1$. We define
\begin{enumerate}
\item $\Omega_{0,1}$ to be set of (isotropic) lines in $X$ whose
projection to $V$ is $0$,
\item $\Omega_{1,0}$ to be set of (isotropic) lines in $X$ whose
projection to $Z_1$ is $0$,
\item $\Omega_{1,1}$ to be set of the remaining (isotropic) lines.
\end{enumerate} Note that lines in $X$ are automatically isotropic as
they lie in a symplectic space and that each set is stable under the
action of $G (Z_1)$.

We analyse the  $G(Z_1)$-orbits of each of these sets.
The set $\Omega_{0,1}$ forms one $G (Z_1)$-orbit. We take $Fe_1^- =
\ell_1^-$ to be the orbit representative. Its stabiliser in $G (Z_1)$
is the parabolic subgroup $Q (Z_1, Fe_1^-)$ that stabilises
$Fe_1^-$. The set $\Omega_{1,0}$ has orbit representatives $Fv$ for
$v$ running over a set of representatives of $V-\{0\} / F^\times$. The
stabiliser of $Fv$ is $G (Z_1)$. Consider the set $\Omega_{1,1}$. For
$v\in V - \{0\}$, the line $F (v + z)$ is in the same orbit as $F
(v+e_1^-)$. Let $v_1, v_2 \in V - \{0\}$. For $F (v_1 + e_1^-)$ to be
in the same orbit as $F (v_2 + e_1^-)$ there should exist $\gamma \in G (Z_1)$
and $c\in F^\times$ such that
\begin{equation*} v_1 + e_1^-\gamma = c (v_2 + e_1^-).
\end{equation*} Thus $F (v+e_1^-)$ gives a set of orbit
representatives as $v$ runs over a set of representatives of $V-\{0\}
/ F^\times$. The stabiliser is $J (Z_1, Fe_1^-)$.
Thus we write
\begin{equation*} I (\xi) = I_{\Omega_{0,1}} (\xi) +I_{\Omega_{1,0}}
(\xi) +I_{\Omega_{1,1}} (\xi)
\end{equation*} with
\begin{align}
\label{eq:I-Omega01} I_{\Omega_{0,1}} (\xi) &= \int_{[J (Z_1,L)]}
\sum_{\delta\in Q (Z_1,Fe_1^-) \lmod G (Z_1)} \xi (\delta g) dg;\\
\label{eq:I-Omega10} I_{\Omega_{1,0}} (\xi) &=\int_{[J (Z_1,L)]}
\sum_{v\in V-\{0\} / F^\times} \xi (\gamma_{Fv} g)dg;\\
\label{eq:I-Omega11} I_{\Omega_{1,1}} (\xi) &=\int_{[J (Z_1,L)]}
\sum_{v\in V-\{0\} / F^\times} \sum_{\delta\in J (Z_1,Fe_1^-) \lmod G
(Z_1)} \xi (\gamma_{F (v+e_1^-)}\delta g)dg.
\end{align}

\begin{prop}\label{prop:abs-conv}
  For $\xi = \xi_{c,s}$ or $\xi_s^c$, the integrals $I_{\Omega_{0,1}} (\xi)$, $I_{\Omega_{1,0}} (\xi)$ and $I_{\Omega_{1,1}} (\xi)$ are absolutely convergent for $\Re s$ and the truncation parameter $c$ large enough.
\end{prop}
This will be proved  in Sec.~\ref{sec:abs-conv-issue} where more precise statements will be given. For some of them we can relax  conditions on $s$ or $c$.

\subsection{Periods of Truncated Eisenstein Series and Residues}

\label{sec:FO->pole}

First we state our results on the relation between the distinction and certain
theta-twisted periods of residue of Eisenstein series. The proofs hinge
on the computation (c.f. Sec.~\ref{sec:value}) of various integrals occurring in the period integrals.

To avoid too much repetition, in this section, $Z$ (resp. $Z'$, $Z''$) is
always a non-degenerate subspace of $X$ and $L$ (resp. $L'$, $L''$) is always
a totally isotropic subspace of $Z$ (resp. $Z'$, $Z''$).
\begin{thm} \label{thm:dist->Eis-pole}
  Let $\sigma \in \cA_\cusp (G (X))$ and let $Y$ be a
(possibly trivial) anisotropic quadratic space.  Assume that
$\sigma\otimes\Theta_{\psi,X,Y}$ is $J (Z,L)$-distinguished for some $(Z,L)$ such that $\dim
L =0$ or $1$, but that it is not $J (Z',L')$-distinguished for all $(Z',L')$ such that $\dim
L' = 0, \ldots, \dim L +1$ and $\dim Z' - \dim L' > \dim Z - \dim
L$. Let $r = \dim X - \dim Z +\dim L$ and $s_0=\half (\dim X - (\dim Y
+2r) +2)$. Then
$s_0$ is a  pole of $E^{Q_1}
(s,g,f_s)$ for some $f_s\in \cA_1 (s,\chi_{Y},\sigma)$.
\end{thm}
\begin{proof}
We claim that the intertwining operator $M (w,s)$ must have a pole at
$s=s_0$. Indeed, assume otherwise. Consider the period integral
\begin{equation*}
  \int_{[J (Z_1,L)]} \Lambda^c E (g,s,f_s)
\theta_{\psi,X_1,Y} (g, 1, \Phi) dg.
\end{equation*}
In Sec.~\ref{sec:arthur-trunc} we have shown that it is equal to
\begin{equation*}
(  I_{\Omega_{0,1}} (\xi_{c,s}) + I_{\Omega_{1,0}} (\xi_{c,s}) + I_{\Omega_{1,1}} (\xi_{c,s})) - (  I_{\Omega_{0,1}} (\xi_s^c) + I_{\Omega_{1,0}} (\xi_s^c) + I_{\Omega_{1,1}} (\xi_s^c)).
\end{equation*}

Let $\xi=\xi_{c,s}$ or $\xi_s^c$. First assume that $\dim L=0$. Then Propositions~\ref{prop:van-I-Omega-10}, \ref{prop:van-I-Omega-11} show that $I_{\Omega_{1,0}}(\xi)$ and $I_{\Omega_{1,1}}(\xi)$ vanish. As we assume that  $M (w,s)$ does not have a pole at
$s=s_0$, we see from \eqref{eq:I-Omega01-computed-expression} that $I_{\Omega_{0,1}} (\xi_s^c)$ does not contribute a pole at $s=s_0$.  Proposition~\ref{prop:non-van-of-key-period-L=0} shows that $I_{\Omega_{0,1}} (\xi_{c,s})$ has a pole at $s=s_0$ for some choice of data. Thus the truncated Eisenstein series and also the Eisenstein series must have  a pole at
$s=s_0$.

Second assume that $\dim L=1$. In Sec.~\ref{sec:dim-L=1} we further cut $I_{\Omega_{0,1}}(\xi)$ and $I_{\Omega_{1,1}}(\xi)$ into 3 parts each and computed the values. Using the notation there we have
\begin{equation*}
  I_{\Omega_{0,1}}(\xi) = J_{\Omega_{0,1},1}(\xi) +J_{\Omega_{0,1},2}(\xi) + J_{\Omega_{0,1},3}(\xi). 
\end{equation*}
Proposition~\ref{prop:van-I-Omega-10-L=1} shows that $I_{\Omega_{1,0}}(\xi)$ vanishes, while Propositions~\ref{prop:van-J-Omega-11-1}, \ref{prop:van-J-Omega-11-2}, \ref{prop:van-J-Omega-11-3} show that all 3 parts of $I_{\Omega_{1,1}}(\xi)$  vanish. Propositions~\ref{prop:van-J-Omega-01-1}, \ref{prop:van-J-Omega-01-3} show that $J_{\Omega_{0,1},1}(\xi)$ and $J_{\Omega_{0,1},3}(\xi)$ vanish. As we assume that  $M (w,s)$ does not have a pole at
$s=s_0$, we see from \eqref{eq:J-Omega01-2-computed-expression} that $J_{\Omega_{0,1},2}(\xi_s^c)$ does not contribute a pole at $s=s_0$.
Finally Proposition~\ref{prop:non-van-of-key-period-L=1} shows that $J_{\Omega_{0,1},2}(\xi_{c,s})$ has a pole at $s=s_0$ for some choice of data. Thus the truncated Eisenstein series and also the Eisenstein series must have  a pole at
$s=s_0$. 

In both cases, we find that the Eisenstein series has a pole at $s=s_0$, so this contradicts our assumption that $M (w,s)$ doe not have a pole at $s=s_0$. Thus the intertwining operator
$M(w,s)$ must have a pole at $s=s_0$. Therefore the Eisenstein series  must
have a pole there. We note that the conditions of the propositions we have invoked are satisfied by the conditions on the distinction and the non-distinction.
\end{proof}
For a complex number $s_1$, let $\cE_{s_1} (g,f_s)$ denote the residue of $E^{Q_1} 
(g,s,f_s)$ at $s=s_1$.  As we will see in the proof of the following theorem, the value of the period integral 
\begin{equation}\label{eq:period-residue-2}
  \int_{[J (Z_1,L)]} \cE_{s_0}(g,f_s)\theta_{\psi,X_1,Y} (g,1,\Phi) dg,
\end{equation}
which is regularised via Arthur truncation does not depend on the truncation parameter. Thus we may  extend the notion of distinction. We say that $\cE_{s_0}(g,f_s)\theta_{\psi,X_1,Y} (g,1,\Phi)$ is $[J (Z_1,L)]$-distinguished if \eqref{eq:period-residue-2} is non-vanishing.

\begin{thm} \label{thm:dist->period-residue-Eis}
With the same setup as in Theorem~\ref{thm:dist->Eis-pole}, assume further that $s_0 >0$. Then the following hold.
  \begin{enumerate}
  \item $s_0$ is in $\cP_1 (\chi,\sigma)$ and it corresponds to a
simple pole.
  \item $\cE_{s_0}(g,f_s)\theta_{\psi,X_1,Y} (g,1,\Phi)$ is $J (Z_1,L)$-distinguished for some choice of
    $f_s\in\cA_1 (s,\chi_{Y},\sigma)$ and $\Phi\in \cS_{X_1,Y} (\A)$.
    \item For any choice of  $f_s\in\cA_1 (s,\chi_{Y},\sigma)$ and $\Phi\in \cS_{X_1,Y}(\A)$, the function
    $\cE_{s_1}(g,f_s)\theta_{\psi,X,Y} (g,1,\Phi)$
     can not be $J (Z_1, L)$-distinguished at any $0<s_1\neq s_0$.
\item For any $(Z'',L'')$ with $\dim L''=0$ or $1$ and $\dim Z'' - \dim L'' > \dim Z -\dim L$ and for any choice of  $f_s\in\cA_1 (s,\chi_{Y},\sigma)$ and $\Phi\in \cS_{X_1,Y}(\A)$,
$\cE_{s_1}(g,f_s)\theta_{\psi,X_1,Y} (g,1,\Phi)$ can not be $J (Z''_1,L'')$-distinguished at any $s_1>0$.
\end{enumerate}
\end{thm}
\begin {proof}    The computation of the theta-twisted periods of truncated Eisenstein series is done in Sec.~\ref{sec:value}. The periods are sums of  several  integrals that are absolutely convergent for $\Re s$ large enough. In addition most of them vanish. (See proof of Thm.~\ref{thm:dist->Eis-pole}.) We essentially just read off the residues of those non-vanishing integrals to show this theorem.

  We make use of the notation introduced in
   Sec.~\ref{sec:arthur-trunc}. Let $s_1$ be a positive real number.
   Set
   \begin{align*}
     \theta^c (g) = &\sum_{\gamma \in Q_1 \lmod G (X_1)} \cE_{s_1,Q_1} (\gamma g,f_s) \hat
{\tau}^c (H (\gamma g))\\ = &\sum_{\gamma \in Q_1
\lmod G (X_1)} \res_{s=s_1} (M (w,s)f_s (\gamma g)) \hat {\tau}^c (H
(\gamma g)).
   \end{align*}
   Then the truncated residue is given by
   \begin{equation*}
     \Lambda^c \cE_{s_1} (g,f_s) = \cE_{s_1} (g,f_s) - \theta^c (g).
   \end{equation*}
The period
\begin{equation*} \int_{[J (Z_1,L)]} \theta^c (g) \theta_{\psi,X_1,Y}
(g, 1, \Phi) dg
\end{equation*} is absolutely convergent when $\Re s$ and the
truncation parameter $c$ are sufficiently large, has meromorphic continuation to the complex plane and as $s_1>0$, is equal to
$\res_{s=s_1} I (\xi^c_s)$ by Sections~\ref{sec:abs-conv-issue} and \ref{sec:value}.
Thus we find that
\begin{align}\label{eq:period-residue} &\int_{[J (Z_1,L)]} \cE_{s_1} (g,f_s)
                                         \theta_{\psi,X_1,Y} (g, 1, \Phi) dg\\
  =& \int_{[J (Z_1,L)]} (\Lambda^c
\cE_{s_1} (g,f_s) + \theta^c (g)) \theta_{\psi,X_1,Y_0} (g, 1, \Phi)
     dg\nonumber\\
  =& \res_{s=s_1} (I (\xi_{c,s})- I (\xi_s^c)) + \res_{s=s_1} I (\xi_s^c)\nonumber\\
=& \res_{s=s_1} I (\xi_{c,s})\nonumber.
\end{align}
When $s_1 \neq s_0$, the
period \eqref{eq:period-residue} vanishes. This proves part (3).
When $s_1=s_0$, the period \eqref{eq:period-residue} is non-vanishing for some choice of data by Prop~\ref{prop:non-van-of-key-period-L=0} for $\dim L=0$ and by Prop.~\ref{prop:non-van-of-key-period-L=1} for $\dim L =1$. These two propositions also show that the values are independent of the truncation parameter. This proves part (2) and as a result, part (1). For part (4), as the domain of integration is over a `larger' group, Section~\ref{sec:value} coupled with the assumptions on non-distinction shows that the period analogous to \eqref{eq:period-residue} vanishes.
\end{proof}

\begin{cor}\label{thm:FO-Eis-pole} Let $\sigma\in\cA_\cusp (G (X))$ and $Y$ a (possibly trivial)
anisotropic quadratic space. Assume that $\FO_{\psi}^{Y} = \dim Y +
2r$. Let $s_0=\half (\dim X - (\dim Y +2r) +2)$.
Then $s_0$ is a  pole of $E^{Q_1}
(s,g,f_s)$ for some $f_s\in \cA_1 (s,\chi_{Y},\sigma)$.
Assume further that
$s_0 >0$. Then the following hold.
  \begin{enumerate}
  \item $s_0$ is in $\cP_1 (\chi,\sigma)$ and it corresponds to a
simple pole.
\item For some choice of $(Z, L)$ with $\dim L=0$ or $1$ and $\dim X - \dim Z + \dim L = r$, $f_s\in\cA_1 (\chi_{Y},\sigma)$
and $\Phi\in \cS_{X_1,Y} (\A)$, $\cE_{s_0}
(g,f_s)\theta_{\psi,X_1,Y} (g,1,\Phi)$
     is $J (Z_1,L)$-distinguished.
  \item For $0<s_1\neq s_0$, the function $\cE_{s_1}
(g,f_s)\theta_{\psi,X_1,Y} (g,1,\Phi)$ can not be
     $J (Z_1, L)$-distinguished for any choice
     of $(Z,L)$ with $\dim L=0$ or $1$ and $\dim X - \dim Z + \dim L = r$, $f_s\in\cA_1 (\chi_{Y},\sigma)$ and $\Phi\in \cS_{X_1,Y}
(\A)$.
\item For any $s_1>0$, $\cE_{s_1}(g,f_s)\theta_{\psi,X_1,Y} (g,1,\Phi)$ cannot be $J (Z''_1,L'')$-distinguished for any $(Z'',L'')$ with $\dim L''=0$ or $1$ and $\dim X - \dim Z'' + \dim L'' < r$, $f_s\in\cA_1 (\chi_{Y},\sigma)$ and $\Phi\in \cS_{X_1,Y}
(\A)$.
  \end{enumerate}
\end{cor}
\begin{proof}
  By Prop.~\ref{prop:FO->period-from-fourier-coeff}, the assumptions imply that $\sigma\otimes\Theta_{\psi,X,Y}$ is $J (Z,L)$-distinguished for some $(Z,L)$ such that $\dim L =0$ or $1$ and $\dim X -\dim Z + \dim L = r$ and  that it is not $J (Z',L')$-distinguished for all $(Z',L')$  with $\dim X - \dim Z' + \dim L' < r$. Hence the assumptions of Thm.~\ref{thm:dist->Eis-pole} or Thm.~\ref{thm:dist->period-residue-Eis} are satisfied. An application of the theorems gives our results.
\end{proof}

Now we can strengthen our results in Thm.~\ref{thm:Eis-pole-LO}.
\begin{thm}\label{thm:Eis-pole-LO-strengthened}
  Let $\sigma\in \cA_\cusp (G (X))$
and $s_0$ be the maximal element in $\cP_1 (\chi,\sigma)$. Write $s_0 = \half (\dim X +2)-j$. Then
\begin{enumerate}
\item $j$ is an integer such that $\frac{1}{4}\dim X \le j<\half (\dim X +2)$;
\item $\LO_{\psi,\chi} (\sigma) = 2j$.
\end{enumerate}
\end{thm}
\begin{proof}
  Part (1) is a restatement of Thm.~\ref{thm:Eis-pole-LO}. We show part (2). We have already shown that $\LO_{\psi,\chi} (\sigma) \le 2j$. Assume that the lowest occurrence  is achieved in the Witt tower of $Y$ so that  we have
    $\LO_{\psi,\chi} (\sigma)=\FO_\psi^Y (\sigma) = 2j' \le 2j$.
  However, by Cor.~\ref{thm:FO-Eis-pole}, for some choice of $f_s\in \cA_1 (s,\chi_{Y},\sigma)$, $E^{Q_1}(s,g,f_s)$ has a pole at
  $s=\half (\dim X + 2 - 2j')$. Thus $j\le j'$ by maximality of $s_0$. Hence we are forced to have $j'=j$. In other words, $\LO_{\psi,\chi} (\sigma) =2j$.
\end{proof}



\section{Computation of Values of Integrals }
\label{sec:value}
Assuming absolute convergence of relevant integrals, we proceed to compute values of the integrals
\eqref{eq:I-Omega01}, \eqref {eq:I-Omega10} and \eqref {eq:I-Omega11}. Thus we will exchange orders of integrations and/or summations freely. First we set up some notation.
For $t\in \GL_1$, let $m_1
(t)$ denote the element in $G (Fe_1^+\oplus Fe_1^-)$ given by
$  \smatrix{t}{0}{0}{t^{-1}}$.
We also write $m_1 (t)$ for its natural image in $G (X_1)$.
For a symplectic space $V$ and a totally isotropic subspace $L$ in $V$, let $Q (V,L)$ be the parabolic subgroup of $G (V)$ that stabilises $L$,  $M (V,L)$ be the Levi part, $N (V,L)$ be the unipotent radical and $\overline{N}(V,L)$ the opposite unipotent subgroup.
In this section, $Z$ (resp. $Z'$) is
always a non-degenerate subspace of $X$ and $L$ (resp. $L'$) is
a totally isotropic subspace in $Z$ (resp. $Z'$). Set $r = \dim X - \dim Z +\dim L$ and $s_0=\half (\dim X - (\dim Y +2r) +2)$. We separate the computation into two cases.

\subsection{Case $\dim L = 0$}
\label{sec:dim-L=0}

\subsubsection{$I_{\Omega_{0,1}}$} We need to consider the integral
\eqref{eq:I-Omega01}. We compute
\begin{align} I_{\Omega_{0,1}} (\xi) = &\int_{[G (Z_1)]}
\sum_{\delta\in Q (Z_1,Fe_1^-) \lmod G (Z_1)} \xi (\delta g) dg
\nonumber\\ = &\int_{Q (Z_1,Fe_1^-)\lmod G (Z_1) (\A)} \xi (g) dg
\nonumber\\ = &\int_{K_{G (Z_1)}}\int_{[Q (Z_1,Fe_1^-)]} \xi (qk) dq
dk \nonumber\\
\label{eq:I-Omega01-for-abs-conv} = &\int_{K_{G (Z_1)}} \int_{[G (Z)]}
\int_{[\GL_1]} \int_{[N(Z_1,Fe_1^-)]} \xi( n m_1 (t)h
k)|t|_\A^{-2\rho_{Q (Z_1,Fe_1^-)}} dn dt dh dk.
\end{align} Let $\xi = \xi_{c, s}$. Then
\begin{align*} \xi (n m_1 (t)h k)= &f_s (n m_1 (t)h k)\theta_{X_1,Y}
(n m_1 (t)h k, 1, \Phi)\hat {\tau}_c (H (n m_1 (t)h k))\\ =& \chi_Y
(t) |t|_\A^{s+ \rho_{Q_1}} f_s (hk)\theta_{X_1,Y} (n m_1 (t)h k, 1, \Phi) \hat
{\tau}_c (H ( m_1 (t))).
\end{align*} Only $\theta_{X_1,Y}$ involves $n$. We realise the Weil representation on $\cS ((Y\otimes\ell_1^+\oplus (Y\otimes X)^+) (\A))$. Using the explicit
formula for Weil representation and the fact that $Y$ is anisotropic,
the inner integral over $[N(Z_1,Fe_1^-)]$ is
\begin{align*} & \int_{[N(Z_1,Fe_1^-)]}\theta_{X_1,Y} (n m_1
(t)h k, 1, \Phi) dn \\ =&\int_{[\Hom (\ell_1^+, Z)]} \sum_{w\in
(Y\otimes X)^+}\omega_{X_1,Y} (n (\mu,0) m_1 (t)h k,1)\Phi (0,w) d\mu
\\ =& \sum_{w\in (Y\otimes X)^+}\omega_{X_1,Y} ( m_1 (t)h k,1)\Phi
(0,w)\\ =& \chi_{Y} (t)|t|_{\A}^{\dim Y/2}\theta_{X,Y} ( h, 1, \Phi_k)
\end{align*} where $\Phi_k (\cdot) = \omega_{X_1,Y} ( k,1)\Phi
(0,\cdot)$ and for $\mu\in \Hom (\ell_1^+, Z)$, $n (\mu,0)$ denotes the element in $N (Z_1,Fe_1^-)$ that sends $e_1^+$ to $e_1^+ + \mu(e_1^+)$ and that acts on $Z$ by duality. Then we integrate over $[\GL_1]$. We just focus on the terms
involving $t$:
\begin{align*} & \int_{[\GL_1]} \chi_{Y} (t)
|t|_{\A}^{s+\rho_{Q_1}}\chi_{Y} (t)|t|_{\A}^{\dim Y/2} \hat {\tau}_c
                 (H (m_1 (t)))|t|_\A^{-2\rho_{Q (Z_1,Fe_1^-)}} d t\\
 =& \vol ( F^\times \lmod \A^1) \int_{0}^c
t^{s-s_0} d^\times t.
\end{align*} Thus for $\Re s > s_0 $ we
have
\begin{align*} I_{\Omega_{0,1}} (\xi_{c,s}) =  \vol ( F^\times \lmod
\A^1) \frac{c^{s-s_0}}{s-s_0}\int_{K_{G (Z_1)}}\int_{[G (Z)]} f_s
(hk)\theta_{X,Y} ( h, 1, \Phi_k) dh dk.
\end{align*} This expression  provides a meromorphic continuation of $I_{\Omega_{0,1}} (\xi_{c,s})$ as function in $s$. An analogous computation shows that for $\Re s > -s_0 $,
\begin{align}\label{eq:I-Omega01-computed-expression} I_{\Omega_{0,1}} (\xi^c_s) =  \vol ( F^\times \lmod
\A^1) \frac{c^{-s-s_0}}{s+s_0} \int_{K_{G (Z_1)}}\int_{[G (Z)]} M (w,s)
f_s (hk)\theta_{X,Y} ( h, 1, \Phi_k) dh dk.
\end{align}
Note that the inner  integrals are period integrals on $\sigma\otimes \Theta_{\psi,X,Y}$.

\begin{prop} \label{prop:non-van-of-key-period-L=0}
  \begin{enumerate}
  \item If $\sigma\otimes \Theta_{\psi,X,Y}$ is not $G (Z)$-distinguished, then both $I_{\Omega_{0,1}} (\xi_{c,s})$ and $I_{\Omega_{0,1}} (\xi^c_s)$ vanish identically.
    \item $I_{\Omega_{0,1}} (\xi_{c,s})$ does not have a pole at  $s\neq s_0$.
    \item If $\sigma\otimes \Theta_{\psi,X,Y}$ is $G (Z)$-distinguished, then the residue
      \begin{equation*}
        \vol ( F^\times \lmod \A^1)\int_{K_{G (Z_1)}}\int_{[G (Z)]} f_{s_0}
(hk)\theta_{X,Y} ( h, 1, \Phi_k) dh dk
\end{equation*}
of $I_{\Omega_{0,1}} (\xi_{c,s})$ at $s=s_0$ does not vanish for some choice of data. Here
$\Phi_k (\cdot) = \omega_{X_1,Y} ( k,1)\Phi(0,\cdot)$.
  \end{enumerate}
\end{prop}
\begin{proof}
  We just need to show  part (3). We will suppress $\psi$ to save space. Since we assume that $\sigma\otimes \Theta_{X,Y}$ is $G (Z)$-distinguished, there exist $\phi\in\sigma$ and $\Psi\in\cS_{X,Y} (\A)$ such that
  \begin{equation*}
    \int_{[G (Z)]} \phi(h)\theta_{X,Y} ( h, 1, \Psi) dh
  \end{equation*}
  is non-vanishing.
  We may assume that $\Psi$ is a pure tensor of the form $\otimes_v \Psi_v$. We will construct $f_s\in\cA_1 (s,\chi,\sigma)$ from $\phi$ and $\Phi\in\cS_{X_1,Y} (\A)$ from $\Psi$. Let $S_\ur$ be the set of finite places of $F$ such that $\chi_v$ and $\psi_v$ are unramified, $\phi$ is invariant under $K_{G (X),v}$ and $\Psi_v$ is the characteristic function of $Y\otimes X^+ (\cO_v)$. Let $S_\rf$ be the rest of the  finite places of $F$. Let $S_\infty$ be the set of infinite places of $F$. Set $S_\ram = S_\rf \cup S_\infty$. We follow the idea in  \cite [Prop.~2] {MR1142486}, but we need finer analysis to account for the theta twist.

  Assume that  $v\in S_\rf$. Choose a small compact open subgroup $\Omega'_v$ of $\overline {N}_{1,v}$ such that $\Omega_v := (Q_{1,v}\cap K_{G (X_1),v}) \cdot \Omega'_v \subset K_{G (X_1),v}$. For $k_v\in \Omega_v$ we write
    $$k_v = n_{k_v}m_1 (a_{k_v})h_{k_v} \omega'_{k_v}$$ 
with $n_{k_v}\in N_{1,v}\cap K_{G (X_1),v}$, $a_{k_v} \in \cO_v^\times$, $h_{k_v}\in G (X)_v \cap K_{G (X_1),v}$ and $\omega'_{k_v} \in \Omega'_v$. 
Assume  $v\in S_\infty$. Fix a complement of $\Lie (M_{1,v}\cap K_{G (X_1),v})$ in $\Lie (K_{G (X_1),v})$, choose a small open subset in this complement and let $\Omega'_v$ denote its exponential. Set $\Omega_v = (M_{1,v}\cap K_{G (X_1),v}) \cdot \Omega'_v$. Also choose a smooth function $\alpha_v$ supported in $\Omega'_v$ with $\alpha_v (I)=1$. For $k_v\in \Omega_v$ we write
$k_v = m_1 (a_{k_v})h_{k_v} \omega'_{k_v}$
with $a_{k_v} \in \cO_v^\times$, $h_{k_v}\in G (X)_v \cap K_{G (X_1),v}$ and $\omega'_{k_v} \in \Omega'_v$. Note that $f_s$ is determined by its values on $K_{G (X_1)}$. For  $k \in K_{G (X_1)}$, we  determine $f_s (k)$ as follows
  \begin{align*}
    &f_s (k) =  f_s (\prod_{v\in S_\ram}k_v) \\
    =&
       \begin{cases}\displaystyle
         \phi (\prod_{v\in S_\ram} h_{k_v})\chi|\cdot|_\A^{s+\rho_{Q_1}} (\prod_{v\in S_\ram} a_{k_v}) \prod_{v\in S_\infty}\alpha_v (\omega'_{k_v}) \quad &\text{if $k_v\in \Omega_v, \forall v\in S_\ram$;
                                                                                                                                                                       }\\
         0 \quad &\text{otherwise.}
       \end{cases}
  \end{align*}

  Now construct a Schwartz function $\Psi_0 = \otimes_v \Psi_{0,v} \in \cS (Y\otimes Fe_1^+ (\A))$ and set $\Phi = \Psi_0 \otimes \Psi$. Assume $v\in S_\ur \cup S_\rf$. Take $\Psi_{0,v}\in \cS (Y\otimes Fe_1^+ (F_v))$ to be the characteristic function of $Y\otimes Fe_1^+ (\cO_v)$. For $v\in S_\rf$, we may shrink $\Omega'_v$ such that $\Phi_{0,v}$ is invariant under the action of $\Omega'_v$. For $v\in S_\infty$, take $\Psi_{0,v} \in \cS (Y\otimes Fe_1^+ (F_v))$ to be any Schwartz function taking value $1$ at the origin. Write $\Omega'_\infty = \prod_{v\in S_\infty} \Omega'_v$ and $\alpha_\infty = \prod_{v\in S_\infty} \alpha_v$ etc. It can be checked that the residue in question is equal to a non-zero constant times
  \begin{align*}
    &\int_{K_{G (Z_1)}}\int_{[G (Z)]} f_s(hk)\theta_{X,Y} ( h, 1, \Phi_k) dh dk \\
    =&\int_{\Omega'_\infty}\int_{[G (Z)]} \phi(h)
      \theta_{X,Y} ( h, 1, \Phi_{\omega'_\infty})\alpha_\infty (\omega'_\infty) dh d\omega'_\infty.
  \end{align*}
  Thus if we pick $\Omega'_\infty$ to be small enough, then, by continuity, the above is non-vanishing.
\end{proof}

\subsubsection{$I_{\Omega_{1,0}}$}
\label{sec:I-Omega10}

We need to consider the integral \eqref{eq:I-Omega10}. For each $v\in V-\{0\}$, fix a dual vector $v^+
\in V$ such that $\form {v^+} {v}_V = 1$. Then we may choose
$\gamma_{Fv}$ to be the element in $G (X_1)$ that is determined by the
following action:
$e_1^+ \leftrightarrow v^+$ and $ e_1^- \leftrightarrow v$, 
with identity action on orthogonal complement. Then
$\gamma_{Fv} G (Z_1) \gamma_{Fv}^{-1} = G (Fv^+ \oplus Z \oplus
Fv)$. Thus
\begin{equation*} I_{\Omega_{0,1}} (\xi) = \sum_{v \in V - \{0\} /
F^\times}\int_{[G (Fv^+ \oplus Z \oplus Fv)]} \xi (g\gamma_{Fv})dg
\end{equation*}
and thus we get
\begin{prop}\label{prop:van-I-Omega-10}
  If $\sigma\otimes \Theta_{\psi,X,Y}$ is not $G (Z')$-distinguished for all $Z' \supset Z$ such that $\dim Z' = \dim Z +2$, then $I_{\Omega_{1,0}} (\xi)$ vanishes for $\xi = \xi_{c,s}$ and $\xi_s^c$.
\end{prop}

\subsubsection{$I_{\Omega_{1,1}}$}
\label{sec:I-Omega11}
We need to consider \eqref{eq:I-Omega11}. For
each $v\in V-\{0\}$, fix a dual vector $v^+ \in V$ such that $\form {v^+} {v}_V =
1$. We take $\gamma_{F (v+e_1^-)}$ to be the element in $G (X_1)$ that
is determined by the following action:
$ e_1^+ \mapsto v^+$, $v^+ \mapsto e_1^+ - v^+$, $ v\mapsto e_1^-$, and $ e_1^- \mapsto  v+e_1^-$, with identity action on orthogonal complement. With
respect to the basis $e_1^+, v^+, v, e_1^-$, it can be written as
$\gamma_1\gamma_2$ where
\begin{equation}\label{eq:gamma_F-v+e1-=gamma1gamma2} \gamma_1 =
\left(
  \begin{array}{cc|cc} &1&&\\ 1&&&\\ \hline &&&1\\ &&1&
  \end{array} \right), \qquad \gamma_2= \left( \begin{array}{cc|cc}
1&-1&&\\ &1&&\\ \hline &&1&1\\ &&&1
   \end{array} \right).
\end{equation} Using Iwasawa decomposition we find that each $v$-term
in \eqref{eq:I-Omega11} is equal to
\begin{equation*} \int_{K_{G (Z_1)}} \int_{\GL (\A)} \int_{[J (Z_1,
Fe_1^-)]} \xi (\gamma_{F (v+e_1^-)}g m_1 (t) k) |t|_\A^{-2\rho_{Q
(Z_1,Fe_1^-)}} dg d^\times t dk.
\end{equation*} We can check that $\gamma_2$ commutes with each
element in $J (Z_1, Fe_1^-)$ and that $\gamma_1 J (Z_1, Fe_1^-)
\gamma_1^{-1} = J (Fv^+ \oplus Z \oplus Fv, Fv)$. Thus we get the inner
integral
\begin{equation*} \int_{[J (Fv^+ \oplus Z \oplus Fv, Fv)]} \xi (g
\gamma_{F (v+e_1^-)} m_1 (t) k)dg.
\end{equation*}
Therefore we have:
\begin{prop}\label{prop:van-I-Omega-11}
  If $\sigma\otimes \Theta_{\psi,X,Y}$ is not $J (Z', L')$-distinguished for all $Z' \supset Z$ such that $\dim Z' = \dim Z +2$ and $L'$ an isotropic line of $Z'$ in the orthogonal complement of $Z$, then $I_{\Omega_{1,1}} (\xi)$ vanishes for $\xi = \xi_{c,s}$ and $\xi_s^c$.
\end{prop}

\subsection{Case $\dim L =1$}
\label{sec:dim-L=1}

Take a non-zero vector $f_1^- \in L$. Fix $f_1^+\in Z$ dual to $f_1^-$
with $\form {f_1^+} {f_1^-}_Z=1$. Write $Z = Ff_1^+ \oplus W \oplus
Ff_1^-$. Set $W_1 = Fe_1^+ \oplus W \oplus Fe_1^-$.

\subsubsection{$I_{\Omega_{01}}$} We need to further analyse the integral
\eqref{eq:I-Omega01}.
The generalised flag variety $Q (Z_1,Fe_1^-) \lmod G
(Z_1)$ parametrises isotropic lines in $Z_1$. For an isotropic line
$\ell\in Z_1$ we set $\delta_\ell$ to be an element in $G (Z_1)$ such
that $e_1^-\delta_\ell = \ell$. We consider the $J
(Z_1,L)$-orbits of isotropic lines. An isotropic line $F (af_1^+ + w + bf_1^-)$ for
$a,b\in F$ and $w\in W_1$ is in the same $J (Z_1,L)$-orbit as
\begin{align*} Ff_1^+, \quad &\text{if $a\neq 0$};\\ Fe_1^-, \quad
&\text{if $a= 0$ and $w\neq 0$};\\ Ff_1^-, \quad &\text{if $a= 0$ and
$w= 0$}.
\end{align*} The stabiliser of $Ff_1^+$ in $J (Z_1,L)$ is $G (W_1)$. This implies that
the quotient $\Stab_{J (Z_1,L)} (Ff_1^+) \lmod J (Z_1,L) \isom N
(Z_1,L)$. Set $NJ_1$ to be the subgroup of $J (Z_1,L)$ consisting of
elements of the form
\begin{equation*}
  \begin{pmatrix} 1 & * &0 &0&0\\ & 1 &0&0&0\\ &&I &0&0\\ &&&1& *\\
&&&&1
  \end{pmatrix}
\end{equation*} with respect to the `basis' $f_1^+, e_1^+, W, e_1^-,
f_1^-$ and $NJ_2$ to be the subgroup of $J (Z_1,L)$ consisting of
elements of the form
\begin{equation*}
  \begin{pmatrix} 1 & 0 &* &*&*\\ & 1 &0&0&*\\ &&I &0&*\\ &&&1&0 \\
&&&&1
  \end{pmatrix}.
\end{equation*} Then $\Stab_{J (Z_1,L)} (Fe_1^-) \isom NJ_2 \rtimes Q
(W_1,Fe_1^-)$. For $Ff_1^-$ the stabiliser is $J (Z_1,L)$. Thus
\eqref{eq:I-Omega01} further splits into three parts:
$I_{\Omega_{0,1}} = J_{\Omega_{0,1},1} + J_{\Omega_{0,1},2} +
J_{\Omega_{0,1},3} $ where
\begin{align}
\label{eq:J-Omega01-1} J_{\Omega_{0,1},1} (\xi) =&\int_{[J (Z_1,L)]}
\sum_{\eta \in N (Z_1, L)} \xi (\delta_{Ff_1^+}\eta g) dg;\\
\label{eq:J-Omega01-2} J_{\Omega_{0,1},2} (\xi) =&\int_{[J (Z_1,L)]}
\sum_{\eta \in NJ_2 \rtimes Q (W_1,Fe_1^-) \lmod J (Z_1,L)} \xi (\eta
g) dg;\\
\label{eq:J-Omega01-3} J_{\Omega_{0,1},3} (\xi) =&\int_{[J (Z_1,L)]}
\xi (\delta_{Ff_1^-} g) dg.
\end{align}
Now we compute each of the three integrals.

The integral \eqref{eq:J-Omega01-1} is equal to:
$\int_{N (Z_1,L)(\A)} \int_{[G (W_1)]} \xi
  (\delta_{Ff_1^+} g n ) dg dn$.
We pick $\delta_{Ff_1^+}$ to be the element in $G (Z_1)$
that is determined by the following action:
$ e_1^+ \leftrightarrow -f_1^-$ and $ e_1^- \leftrightarrow f_1^+$, 
with identity action on orthogonal complement. Then
$\delta_{Ff_1^+} G (W_1) \delta_{Ff_1^+}^{-1} = G (Ff_1^+ \oplus W
\oplus Ff_1^- ) =G (Z)$. Thus we get an inner integral
$\int_{[G (Z)]} \xi (g \delta_{Ff_1^+} n ) dg$, and we have:
\begin{prop}\label{prop:van-J-Omega-01-1}
  If $\sigma\otimes \Theta_{\psi,X,Y}$ is not $G (Z)$-distinguished, then $J_{\Omega_{0,1},1} (\xi)$ vanishes for $\xi = \xi_{c,s}$ and $\xi_s^c$.
\end{prop}

The integral \eqref{eq:J-Omega01-2} is equal to:
\begin{align*} & \int_{NJ_1 (\A)} \int_{[NJ_2]} \int_{Q (W_1,Fe_1^-)
                 \lmod G (W_1) (\A)} \xi (n_2 n_1 g) dg dn_2 dn_1 \\
  =&\int_{K_{G(W_1)}} \int_{NJ_1 (\A)} \int_{[NJ_2]} \int_{[N (W_1,Fe_1^-)]}
\int_{[G (W)]} \int_{[\GL_1]}\xi (n_2 n_1 n m_1 (t) g k)\\
               &\qquad \qquad\qquad |t|_\A^{-2\rho_{Q (W_1,Fe_1^-)}} dt dg dn dn_2 dn_1 dk\\
 =&\int_{K_{G (W_1)}} \int_{NJ_1 (\A)} \int_{[N(Z_1,Fe_1^-\oplus
   Ff_1^-)]} \int_{[G (W)]}\int_{[\GL_1]} \xi (n_3n_1 m_1 (t) g k)\\
  &\qquad \qquad\qquad |t|_\A^{-2\rho_{Q (W_1,Fe_1^-)}} dt dg dn_3 dn_1 dk.
\end{align*}
We conjugate $m_1 (t)$
across $n_1$, which produces a modular character, and  exchange
$n_1$ and $g$ since they commute. We get
\begin{align*} &\int_{K_{G (W_1)}} \int_{NJ_1 (\A)}
\int_{[N(Z_1,Fe_1^-\oplus Ff_1^-)]} \int_{[G (W)]} \int_{[\GL_1]}\xi (
                 n_3 m_1 (t) g n_1 k) \\
  &\qquad \qquad\qquad|t|_\A^{-2\rho_{Q (W_1,Fe_1^-)}-1} dt dg
                 dn_3 dn_1 dk\\
  =&\int_{K_{G (W_1)}} \int_{NJ_1 (\A)} \int_{[N' ]}
\int_{[N(Z, Ff_1^-)]} \int_{[G (W)]} \int_{[\GL_1]} \xi ( n' m_1 (t) v
     g n_1 k) \\
  &\qquad \qquad\qquad |t|_\A^{-2\rho_{Q (W_1,Fe_1^-)} -1} dt dg dv dn' dn_1
     dk\\
  =&\int_{K_{G (W_1)}} \int_{NJ_1 (\A)} \int_{[N' ]} \int_{[J(Z,
     Ff_1^-)]} \int_{[\GL_1]} \xi ( n' m_1 (t) g n_1 k) \\
  &\qquad \qquad\qquad |t|_\A^{-2\rho_{Q
(W_1,Fe_1^-)} -1} dt dg dn' dn_1 dk
\end{align*} where we set $N' = N (Z_1, Fe_1^-) \cap N
(Z_1,Fe_1^-\oplus Ff_1^-)$.

Suppose that $\xi = \xi_{c,s}$. Then
\begin{align*}& \xi_{c,s} (n' m_1 (t) g n_1 k) \\= &f_s (n' m_1 (t) g n_1
k)\theta_{X_1,Y} (n' m_1 (t) g n_1 k, 1, \Phi)\hat {\tau}_c (H (n' m_1
(t) g n_1 k)) \\ = &\chi_Y (t)|t|_\A^{s+\rho_{Q_1}} f_s ( g
n_1 k)\theta_{X_1,Y} (n' m_1 (t) g n_1 k, 1, \Phi)\hat {\tau}_c (H (
m_1 (t) g n_1)).
\end{align*} Only the term $\theta_{X_1,Y}$ involves $n'$. By explicit
formula of Weil representation, we get
\begin{equation*} \int_{[N' ]} \theta_{X_1,Y} (n' m_1 (t) g n_1 k, 1,
\Phi) dn' = \chi_Y (t) |t|_\A^{\dim Y /2}\theta_{X,Y} (g,1,\Phi_{n_1
k})
\end{equation*} where $\Phi_{n_1 k} = \omega_{X_1,Y} ( n_1 k, 1)\Phi
(0,\cdot)$. In total, the exponent of $|t|_\A$ is
\begin{equation*}
  s+\rho_{Q_1}+\dim Y/2 -2\rho_{Q (W_1,Fe_1^-)} -1 = s - s_0.
\end{equation*}
 It can be
checked that $H ( m_1 (t) g n_1) = H ( m_1 (t) n_1)$.  Set $c (n_1) =\exp (\form {H (n_1)} {1})$. Write $\overline {n}$ for $n_1$ to emphasise the fact that $NJ_1$ is a subgroup of the opposite unipotent. We find that
$J_{\Omega_{0,1},2} (\xi_{c,s})$ is equal to
\begin{align*} \int_{K_{G (W_1)}} \int_{NJ_1 (\A)} \int_{[J(Z,L)]} f_s ( g \overline {n} k) \theta_{X,Y} (g,1,\Phi_{\overline {n} k})
 \int_{[\GL_1]}\hat {\tau}_c (H ( m_1 (t) \overline {n})) |t|_\A^{s - s_0} dt dg d\overline {n} dk\\
  = \vol ( F^\times \lmod \A^1)\int_{K_{G (W_1)}} \int_{NJ_1 (\A)} \int_{[J(Z,
    L)]} f_s ( g \overline {n} k) \theta_{X,Y} (g,1,\Phi_{\overline {n} k})
    \frac{(c/c (\overline {n}))^{s-s_0}}{s-s_0} dg d\overline {n} dk.
\end{align*}
 Thus we find that the above can possibly have a pole only at $s=s_0$. The residue is
\begin{align*}
\vol ( F^\times \lmod \A^1)  \int_{K_{G (W_1)}} \int_{NJ_1 (\A)} \int_{[J(Z,L)]} f_{s_0} ( g \overline {n} k) \theta_{X,Y} (g,1,\Phi_{\overline {n} k})
  dg d\overline {n} dk.
\end{align*}
 Note that the inner most integral is a period integral on $\sigma\otimes \Theta_{\psi,X,Y}$.

Similarly we can compute $J_{\Omega_{0,1},2} (\xi_s^c)$:
\begin{multline}\label{eq:J-Omega01-2-computed-expression}
 \vol ( F^\times \lmod \A^1) \int_{K_{G (W_1)}} \int_{NJ_1 (\A)} \int_{[J(Z,
    L)]} M (w,s) f_s ( g \overline {n} k) \theta_{X,Y} (g,1,\Phi_{\overline {n} k})
    \\ \frac{(c/c (\overline {n}))^{-s-s_0}}{s+s_0} dg d\overline {n} dk.
\end{multline}
\begin{prop}\label{prop:non-van-of-key-period-L=1}
  \begin{enumerate}
  \item
    If $\sigma\otimes \Theta_{\psi,X,Y}$ is not $J (Z,L)$-distinguished, then both $J_{\Omega_{0,1},2} (\xi_{c,s})$ and $J_{\Omega_{0,1},2} (\xi_s^c)$ vanish identically;
    \item $J_{\Omega_{0,1},2} (\xi_{c,s})$ does not have a pole at  $s\neq s_0$;
    \item If $\sigma\otimes \Theta_{\psi,X,Y}$ is  $J (Z,L)$-distinguished, then  the residue
      \begin{equation*}
        \vol ( F^\times \lmod \A^1)  \int_{K_{G (W_1)}} \int_{NJ_1 (\A)} \int_{[J(Z,L)]} f_{s_0} ( g \overline {n} k) \theta_{X,Y} (g,1,\Phi_{\overline {n} k})  dg d\overline {n} dk
      \end{equation*}
      of $J_{\Omega_{0,1},2} (\xi_{c,s})$ at $s=s_0$ does not vanish for some choice data. Here
      \begin{equation*}
        \Phi_{\overline {n}  k} = \omega_{X_1,Y} ( \overline {n} k, 1)\Phi(0,\cdot).
      \end{equation*}
\end{enumerate}
\end{prop}
\begin{proof}
  We just need to show part (3). We will suppress $\psi$ to save space. Since we assume that $\sigma\otimes \Theta_{X,Y}$ is $J (Z,L)$-distinguished, there exist $\phi\in\sigma$ and $\Psi\in\cS_{X,Y} (\A)$ such that
  \begin{equation*}
    \int_{[J (Z,L)]} \phi(h)\theta_{X,Y} ( h, 1, \Psi) dh
  \end{equation*}
  is non-vanishing.
  We may assume that $\Psi$ is a pure tensor of the form $\otimes_v \Psi_v$. We will construct $f_s\in\cA_1 (s,\chi,\sigma)$ from $\phi$ and $\Phi\in\cS_{X_1,Y} (\A)$ from $\Psi$. Let $S_\ur$ be the set of finite places of $F$ such that $\chi_v$ and $\psi_v$ are unramified, $\phi$ is invariant under $K_{G (X),v}$ and $\Psi_v$ is the characteristic function of $Y\otimes X^+ (\cO_v)$. Let $S_\rf$ be the rest of the  finite places of $F$. Let $S_\infty$ be the set of infinite places of $F$. Set $S_\ram = S_\rf \cup S_\infty$. The construction is much more involved than in Prop.~\ref{prop:non-van-of-key-period-L=0}.

  Assume that  $v\in S_\rf$. Choose a small compact open subgroup $\Omega'_v$ of $\overline {N}_{1,v}$ such that $\Omega_v := (Q_{1,v}\cap K_{G (X_1),v}) \cdot \Omega'_v \subset K_{G (X_1),v}$. For $k_v\in \Omega_v$ we write
  \begin{equation*}
    k_v = n_{k_v}m_1 (a_{k_v})h_{k_v} \omega'_{k_v}
  \end{equation*}
  with $n_{k_v}\in N_{1,v}\cap K_{G (X_1),v}$, $a_{k_v} \in \cO_v^\times$, $h_{k_v}\in G (X)_v \cap K_{G (X_1),v}$ and $\omega'_{k_v} \in \Omega'_v$. Assume  $v\in S_\infty$. Fix a complement of $\Lie (M_{1,v}\cap K_{G (X_1),v})$ in $\Lie (K_{G (X_1),v})$, choose a small open subset in this complement and let $\Omega'_v$ denote its exponential. Set $\Omega_v = (M_{1,v}\cap K_{G (X_1),v}) \cdot \Omega'_v$. In fact, this is $M_{1,v}\cap K_{G (X_1),v}$ times a small neighbourhood of the identity element. Also choose a smooth function $\alpha_v$ supported in $\Omega'_v$ with $\alpha_v (I)=1$.

  Construct a Schwartz function $\Psi_0 = \otimes_v \Psi_{0,v} \in \cS (Y\otimes Fe_1^+ (\A))$ and set $\Phi = \Psi_0 \otimes \Psi$. Assume $v\in S_\ur \cup S_\rf$. Take $\Psi_{0,v}\in \cS (Y\otimes Fe_1^+ (F_v))$ to be the characteristic function of $Y\otimes Fe_1^+ (\cO_v)$. For $v\in S_\rf$, we may shrink $\Omega'_v$ such that $\Phi_{0,v}$ is invariant under the action of $\Omega'_v$. For $v\in S_\infty$, take $\Psi_{0,v} \in \cS (Y\otimes Fe_1^+ (F_v))$ to be any Schwartz function taking value $1$ at the origin.

 Now  focus on the term $f_s(g\overline{n}k)$ in the period integral.  For an unramified place $v$, $f_s$ has to be $K_{G(X_1),v}$-invariant. For $v\in S_\ram$, we want $f_s|_{G(X_1)_v}$ to be supported in $\Omega_v$.  We check how elements in $NJ_1(\A)$ conjugate with elements in $N (W_1,Fe_1^-)(\A)$. This is to deal with the $n_{k_v}$ part of $k_v\in\Omega_v$. Let
  \begin{equation*}
    \overline {n} =
    \begin{pmatrix}
      1 &&&&\\
      a &1&&&\\
      &&I&&\\
      &&&1&\\
      &&&-a&1
    \end{pmatrix}
    \quad \text {and}\quad
     n =
    \begin{pmatrix}
      1 &0 &\mu &0 &\alpha\\
      &1 &&&0\\
      &&I&&\nu \\
      &&&1&0\\
      &&&&1
    \end{pmatrix}
  \end{equation*}
  with respect to the basis $e_1^+, f_1^+,W,f_1^-,e_1^-$.
  Then $\overline {n}n\overline {n}^{-1}$ is equal to
  \begin{equation*}
    \begin{pmatrix}
      1 & 0&\mu & a\alpha &\alpha\\
      &1  &a\mu &a^2\alpha &a\alpha\\
        &     &I   &a\nu &\nu\\
      &&&1&0\\
      &&&&1
    \end{pmatrix}
    = n' \cdot
    \begin{pmatrix}
      1 & 0&0 & 0 &0\\
      &1  &a\mu &a^2\alpha &0\\
        &     &I   &a\nu &0\\
      &&&1&0\\
      &&&&1
    \end{pmatrix}
  \end{equation*}
  for some $n'\in N_1 (\A)$. After conjugating with $g\in J (Z,L) (\A)$, $n'$ drops out of $f_s$. In addition, for the theta term in the period integral, $n'$ leaves $\Phi (0,\cdot)$ invariant. Also note that the matrix following $n'$ in the equation above will be absorbed into $J (Z,L) (\A)$ in the integration. Conjugation of $NJ_1 (\A)$ and $M_1 (W_1,Fe_1^-) (\A)$ is clear.

Now we check when $f_s(g\overline{n}k)$ can possibly be non-vanishing. Assume that $v\in S_\rf$. Recall that $w$ is the longest Weyl element in
  \begin{equation*}
    Q (W_1,Fe_1^-)_v\lmod G (W_1)_v /Q (W_1,Fe_1^-)_v.
  \end{equation*}
  Consider the Bruhat cells
    $Q (W_1,Fe_1^-)_vw' N (W_1,Fe_1^-)_vw$ 
  of $G (W_1)_v$. We are actually using a variant of Bruhat decomposition. The open cell is
  \begin{equation*}
    Q (W_1,Fe_1^-)_v \overline {N} (W_1,Fe_1^-)_v.
  \end{equation*}
  We can check that for $v\in S_\rf$, if $k_v\in K_{G (W_1),v}$ is not in the open Bruhat cell $Q (W_1,Fe_1^-)_v \overline {N} (W_1,Fe_1^-)_v$ then $\overline {n}_vk_v$ cannot be in $Q_{1,v}\overline {N}_{1,v}$. Thus for $f_s (g\overline {n}k)$ to not vanish, it is necessary that $k_v \in Q (W_1,Fe_1^-)_v \overline {N} (W_1,Fe_1^-)_v$. Suppose that $k_v \in Q (W_1,Fe_1^-)_v \overline {N} (W_1,Fe_1^-)_v\cap  K_{G (W_1),v}$ with decomposition $k_v = q_v \overline {n}'_v$. We have shown how $q_v$ conjugates over $\overline {n}_v$. Thus for $f_s (g\overline {n}k)$ to not vanish, it is necessary that $\overline {n}_v \overline {n}'_v \in \Omega'_v$. Since $NJ_{1,v}$ and $\overline {N} (W_1,Fe_1^-)_v$
  are `disjoint', the requirement is equivalent to $\overline {n}_v$ and $\overline {n}'_v \in \Omega'_v$. Thus just as in Prop.~\ref{prop:non-van-of-key-period-L=0}, only $k_v \in \Omega_v$ can contribute. In addition if we write $\overline {n}_v$ in matrix form as above, $a_v$ should have small $v$-adic norm.

  Assume that $v\in S_\infty$. Again assume $\overline {n}_v$ is given as above. Conjugating an element in $M (W_1,Fe_1^-)_v \cap K_{G (W_1)}$ across $\overline {n}_v$ will not change the norm of $a_v$. Thus for $f_s (g\overline {n}k)$ to be non-vanishing, it is necessary that $a_v \in F_v$ lies in a small open around $0$ and that $k_v \in \Omega_v$.

  Assume $v\in S_\ur$. If $a_v\not\in \cO_v$, we decompose $\overline {n}_v$ according to the Iwasawa decomposition induced from
    $\begin{pmatrix}
      1 & \\
      a_v &1
    \end{pmatrix} =
    \begin{pmatrix}
      a_v^{-1} &1 \\
       & a_v
    \end{pmatrix}
    \begin{pmatrix}
      & -1 \\
      1& a_v^{-1}
    \end{pmatrix}$.
  Since $f_s (\cdot k)$ for any $k\in K_{G (X_1)}$ is in the space of $\sigma$, thus by Lemma~\ref{lemma:rapid-dec-several-variable} for any $r_1$  there exists a constant $C_1$ such that we have bound
  \begin{equation*}
    |f_s (g\overline {n}k)| < C_1 \prod_{\substack{v\in S_\ur\\ a_v \not\in \cO_v}} |a_v^{-1}|_v^{s+\rho_{Q_1}} |a_v|_v^{-r_1\dim X} m_{P_0} (g)^{-r_1\rho_{P_0}}.
  \end{equation*}
  We have made use of the fact that for $v\in S_\ram$, $a_v$ has small norm.
  Also we have bound for theta series
  \begin{equation*}
    |\theta_{X,Y} (g,1,\Phi_{\overline {n}k})| < C_2 ||g||^{r_2}||\overline {n}||^{r_2}.
  \end{equation*}
  for some constant $C_2$ and $r_2>0$.
  Note that
  \begin{equation*}
    ||\overline {n}|| = \prod_{v} \sup\{|a_v|_v, 1\} = \prod_{\substack{v\in S_\ur\\ a_v \not\in \cO_v}} |a_v|_v.
  \end{equation*}
  Here again we have used that fact that for $v\in S_\ram$, $a_v$ has small norm.
  Fix $s$ and fix a large enough $r_1$. Then for any $0<\varepsilon <1$, there exists $C_3$ such that if norm of $v\in S_\ur$ is larger than $C_3$ then
  \begin{equation*}
    |a_v^{-1}|_v^{s+\rho_{Q_1}} |a_v|_v^{-r_1\dim X + r_2} < \varepsilon.
  \end{equation*}
  We shrink $S_\ur$ to exclude these finitely many places of small norm and expand $S_\rf$, so that the above bound holds for all $v\in S_\ur$. We remark that $C_3$ depends on $C_1$ and $C_2$. However we may use the same $C_1$ in the bound for  $f_s$  even though it is changed after enlarging the set $S_\rf$.
  Also enlarging $S_\rf$ has no effect on the $\Phi$ we have constructed. In short, $C_3$ depends only on $\phi\in\sigma$ and $\Psi\in\cS_{X,Y}(\A)$.  Then we find that
  \begin{equation*}
    \int_{K_{G (W_1)}}\int_{\substack{NJ_1 (\A)\\ a_v\not\in \cO_v\\ \text {for some } v\in S_\ur} }\int_{[J (Z,L)]}     |f_s (g\overline {n}k)\theta_{X,Y} (g,1,\Phi_{\overline {n}k})| dg d\overline {n} dk   < C_4 \varepsilon
  \end{equation*}
  where $C_4 = \int_{[J (Z,L)]} C_1 C_2 m_{P_0} (g)^{-r_1\rho_{P_0}} ||g||^{r_2} dg$ is a finite number. Thus this part is negligible.
  Finally we check that the remaining part is equal to
  \begin{align*}
    & \int_{K_{G (W_1)}}\int_{\substack{\prod_{v\in S_\ram} NJ_1 (F_v) \times \prod_{v\in S_\ur} NJ_1 (\cO_v)} }\int_{[J (Z,L)]}     f_s (g\overline {n}k)\theta_{X,Y} (g,1,\Phi_{\overline {n}k}) dg d\overline {n} dk   \\
    =& \int_{\prod_{v\in S_\infty} \Omega'_v}\int_{\substack{ NJ_{1,\infty} \\ a_\infty \text {has small norm} } }\int_{[J (Z,L)]}     f_s (g\overline {n}k)\theta_{X,Y} (g,1,\Phi_{\overline {n}k}) dg d\overline {n} dk
  \end{align*}
  where we may need to shrink $\Omega'_v$ further to get invariance for theta series to get the equality.
  By continuity, for small enough $\Omega'_v$ with $v\in S_\infty$, the above is non-vanishing.
\end{proof}


Now we compute the integral \eqref{eq:J-Omega01-3}.
We pick $\delta_{Ff_1^-}$ to be the element in $G (Z_1)$ that is
determined by the following action:
$e_1^+ \leftrightarrow f_1^+$ and $ e_1^- \leftrightarrow f_1^-$
with identity action on orthogonal complement.  Then
$\delta_{Ff_1^-} J (Z_1,L) \delta_{Ff_1^-}^{-1} = J
(Z_1,Fe_1^-)$. Thus \eqref{eq:J-Omega01-3} is equal to
\begin{equation*}
 \int_{[J (Z_1,Fe_1^-)]} \xi ( g \delta_{Ff_1^-}) dg
=\int_{[N (Z_1, Fe_1^-)]} \int_{[G (Z)]} \xi (n g \delta_{Ff_1^-})
dgdn.
\end{equation*}
We see that the above has an inner integral of the form
$\int_{[G (Z)]} \xi ( g n \delta_{Ff_1^-})dg$.

\begin{prop}\label{prop:van-J-Omega-01-3}
  If $\sigma\otimes \Theta_{\psi,X,Y}$ is not $G (Z)$-distinguished, then $J_{\Omega_{0,1},3} (\xi)$ vanishes for $\xi = \xi_{c,s}$ and $\xi_s^c$.
\end{prop}

\subsubsection{$I_{\Omega_{1,0}}$}
We need to consider
\eqref{eq:I-Omega10}.
For each $v\in V-\{0\}$, we choose $v^+\in V$ that is dual to $v$
so that $\form {v^+} {v}_V=1$. We pick $\gamma_{Fv}$ to be the element
in $G (X_1)$ that is determined by the following action:
$e_1^+ \leftrightarrow v^+$ and $ e_1^- \leftrightarrow v$,
with identity action on orthogonal complement. Then
$\gamma_{Fv}J (Z_1,L)\gamma_{Fv}^{-1} = J (Fv^+\oplus Z \oplus Fv , L)
$. Thus \eqref{eq:I-Omega10} is equal to
\begin{equation*} \sum_{v\in V-\{0\} / F^\times} \int_{[J (Fv^+\oplus
Z \oplus Fv,L)]} \xi (g \gamma_{Fv} )dg
\end{equation*}
and we get:

\begin{prop}\label{prop:van-I-Omega-10-L=1}
  If $\sigma\otimes \Theta_{\psi,X,Y}$ is not $J (Z', L)$-distinguished for all $Z' \supset Z$ such that $\dim Z' = \dim Z +2$, then $I_{\Omega_{1,0}} (\xi)$ vanishes for $\xi = \xi_{c,s}$ and $\xi_s^c$.
\end{prop}

\subsubsection{$I_{\Omega_{1,1}}$}
We need to consider \eqref{eq:I-Omega11}.
For each fixed $v\in V-\{0\} / F^\times$, we analyse each term:
$\int_{[J (Z_1,L)]} \sum_{\delta\in J (Z_1,e_1^-)
\lmod G (Z_1)} \xi (\gamma_{F (v+e_1^-)}\delta g)dg$.

We choose $v^+\in V$, dual to $v$, so that
$\form {v^+} {v}_V=1$. The set $J (Z_1,Fe_1^-) \lmod G (Z_1)$
parametrises non-zero isotropic vectors in $Z_1$. We will make use of
notation we set up for $I_{\Omega_{0,1}}$. For a non-zero isotropic
vector $z \in Z_1$ we set $\delta_z$ to be an element in $G (Z_1)$
such that $e_1^-\delta_v = v$. We consider their $J
(Z_1,L)$-orbits. An isotropic vector $ af_1^+ + w + bf_1^-$ for
$a,b\in F$ and $w\in W_1$ is in the same $J (Z_1,L)$-orbit as
\begin{align*} af_1^+, \quad &\text{if $a\neq 0$};\\ e_1^-, \quad
&\text{if $a= 0$ and $w\neq 0$};\\ bf_1^-, \quad &\text{if $a= 0$ and
$w= 0$}.
\end{align*}
For $a\in F^\times$ the stabiliser of $af_1^+$ in $J
(Z_1,L)$ is $G (W_1)$ and hence
\begin{equation*}
  \Stab_{J (Z_1,L)} (af_1^+) \lmod J(Z_1,L) \isom N (Z_1,L).
\end{equation*}
For the isotropic vector $e_1^-$ we have
\begin{equation*}
 \Stab_{J (Z_1,L)} (e_1^-) \isom NJ_2 \rtimes J (W_1,Fe_1^-).
\end{equation*}
For $bf_1^-$ with $b\in F^\times$, the stabiliser $\Stab_{J (Z_1,L)}
(bf_1^-) $ is $J (Z_1,L)$. Thus we further split $I_{\Omega_{1,1}}$ as
$J_{\Omega_{1,1},1} + J_{\Omega_{1,1},2} +J_{\Omega_{1,1},3}$ where
\begin{align}
  \label{eq:J-Omega11-1} J_{\Omega_{1,1},1} (\xi) =&\int_{[J (Z_1,L)]}
\sum_{a\in F^\times}\sum_{\eta \in N (Z_1, L)} \xi (\gamma_{F
(v+e_1^-)}\delta_{af_1^+}\eta g) dg;\\
\label{eq:J-Omega11-2} J_{\Omega_{1,1},2} (\xi) =&\int_{[J (Z_1,L)]}
\sum_{\eta \in NJ_2 \rtimes J (W_1,Fe_1^-) \lmod J (Z_1,L)} \xi
(\gamma_{F (v+e_1^-)} \eta g) dg;\\
\label{eq:J-Omega11-3} J_{\Omega_{1,1},3} (\xi) =&\int_{[J (Z_1,L)]}
\sum_{b\in F^\times} \xi (\gamma_{F (v+e_1^-)} \delta_{bf_1^-} g) dg.
\end{align} We recall that we have chosen $\gamma_{F (v+e_1^-)}$ to be
$\gamma_1\gamma_2$ as in \eqref{eq:gamma_F-v+e1-=gamma1gamma2}.

 We find that \eqref{eq:J-Omega11-1} is equal to
\begin{align*} & \int_{[J (Z_1,L)]} \sum_{a\in F^\times}\sum_{\eta \in
N (Z_1, L)} \xi (\gamma_{F (v+e_1^-)}\delta_{af_1^+}\eta g) dg\\ =&
\int_{N (Z_1,L) (\A)} \int_{[G (W_1)]} \sum_{a\in F^\times} \xi
(\gamma_{F (v+e_1^-)}\delta_{af_1^+} gn) dgdn.
\end{align*} We note that $\delta_{af_1^+} G (W_1)
\delta_{af_1^+}^{-1} = G (Z)$ and $\gamma_{F (v+e_1^-)}$ commutes with
elements in $G (Z)$. Thus the above is equal to
\begin{align*} \int_{N (Z_1,L) (\A)} \int_{[G (Z)]} \sum_{a\in
F^\times} \xi (g \gamma_{F (v+e_1^-)}\delta_{af_1^+} n) dgdn.
\end{align*}

\begin{prop}\label{prop:van-J-Omega-11-1}
  If $\sigma\otimes \Theta_{\psi,X,Y}$ is not $G (Z)$-distinguished, then $J_{\Omega_{1,1},1} (\xi)$ vanishes for $\xi = \xi_{c,s}$ and $\xi_s^c$.
\end{prop}

 The integral \eqref{eq:J-Omega11-2} is equal to
\begin{align*} &\int_{[J (Z_1,L)]} \sum_{\eta \in NJ_2 \rtimes J
(W_1,Fe_1^-) \lmod J (Z_1,L)} \xi (\gamma_{F (v+e_1^-)} \eta g) dg\\
=&\int_{NJ_1 (\A)} \int_{[NJ_2]} \int_{J (W_1,Fe_1^-) \lmod G (W_1)
(\A)} \xi (\gamma_{F (v+e_1^-)} n_1 n_2 g ) dg dn_2 dn_1\\
=&\int_{NJ_1 (\A)} \int_{K_{G (W_1)}}\int_{\A^\times}
\int_{[NJ_2]}\int_{[J (W_1,Fe_1^-)] } \xi (\gamma_{F (v+e_1^-)}n_1 n_2
g m_1 (t)k ) dgdn_2 dt dk dn_1\\ =&\int_{NJ_1 (\A)} \int_{K_{G
(W_1)}}\int_{\A^\times} \int_{[J (Z_1,Fe_1^-\oplus Ff_1^-)] } \xi
(\gamma_{F (v+e_1^-)} n_1 g m_1 (t)k ) dg dt dk dn_1\\
=&\int_{NJ_1 (\A)} \int_{K_{G (W_1)}}\int_{\A^\times} \int_{[J
(Z_1,Fe_1^-\oplus Ff_1^-)] } \xi (\gamma_{F (v+e_1^-)} gn_1 m_1 (t)k )
dg dt dk dn_1.
\end{align*} We can check that $\gamma_2$ commutes with $g\in J
(Z_1,Fe_1^-\oplus Ff_1^-)$ and that
\begin{equation*}
\gamma_1J (Z_1,Fe_1^-\oplus
Ff_1^-)\gamma_1^{-1} = J (Fv^+\oplus Z \oplus Fv,Fv\oplus
Ff_1^-).
\end{equation*}
 Thus we get an inner integral:
$\int_{[J (Fv^+\oplus Z \oplus Fv,Fv\oplus Ff_1^-)] }
\xi ( g\gamma_{F (v+e_1^-)}n_1 m_1 (t)k) dg$.

\begin{prop}\label{prop:van-J-Omega-11-2}
  If $\sigma\otimes \Theta_{\psi,X,Y}$ is not $J (Z',L'\oplus L)$-distinguished where $Z'\supset Z$ such that $\dim Z' = \dim Z+2$ and $L'$ is an isotropic line of $Z'$ in the orthogonal complement of $Z$, then $J_{\Omega_{1,1},2} (\xi)$ vanishes for $\xi = \xi_{c,s}$ and $\xi_s^c$.
\end{prop}

 For the integral \eqref{eq:J-Omega11-3} we note that $\delta_{bf_1^-}
J (Z_1,L)\delta_{bf_1^-}^{-1} = J (Z_1,Fe_1^-)$, that $\gamma_2$
commutes with elements in $J (Z_1,Fe_1^-)$ and that
\begin{equation*}
\gamma_1 J
(Z_1,Fe_1^-)\gamma_1^{-1} = J (Fv^+\oplus Z \oplus Fv, Fv).
\end{equation*}
 Thus we get
$\int_{[J (Fv^+\oplus Z \oplus Fv, Fv)]} \sum_{b\in
F^\times} \xi (g\gamma_{F (v+e_1^-)} \delta_{bf_1^-} ) dg$.

\begin{prop}\label{prop:van-J-Omega-11-3}
  If $\sigma\otimes \Theta_{\psi,X,Y}$ is not $J (Z',L')$-distinguished where $Z'\supset Z$ such that $\dim Z' = \dim Z+2$ and $L'$ is an isotropic line of $Z'$ in the orthogonal complement of $Z$, then $J_{\Omega_{1,1},3} (\xi)$ vanishes for $\xi = \xi_{c,s}$ and $\xi_s^c$.
\end{prop}

\section{On Absolute Convergence of Integrals}
\label{sec:abs-conv-issue}

In this section we prove Prop.~\ref{prop:abs-conv} which concerns absolute convergence of various integrals to justify our computation. As  absolute convergence for the case $\dim L =1$ follows from the case where $\dim L = 0$, we just treat the latter case. Thus throughout this section the integrals are over $[G (Z_1)]$.

\subsection{Bounds for cuspidal automorphic forms and theta series}
\label{sec:bounds}

We need some bounds in the later computation and we
list the results here.  Let $Z$
and $V$ be two non-degenerate subspaces of $X$  orthogonal to each
other and assume $X= Z \oplus V$. Choose minimal parabolic subgroup $P_{0,Z}$ (resp. $P_0$) of $G (Z)$ (resp. $G (X)$) such that
$P_{0,Z} = P_0 \cap G (Z)$. Let $M_0$ denote the Levi subgroup of $P_0$ and $N_0$ the unipotent radical. Set
$M_0 (\A)^1 = \cap_{\chi \in \Rat (M_0)} \Ker |\chi|$, 
where $\Rat (M_0)$ denotes the set of all rational characters of $M_0$.
Recall the map
\begin{equation*}
  m_{P_0}: G (X) (\A) \rightarrow M_0 (\A)^1 \lmod M_0 (\A)
\end{equation*}
as defined in \cite [I.1.4] {MR1361168}. Write $g\in G (X) (\A)$ as $g=umk$ for $u\in N_0 (\A)$, $m\in M_0 (\A)$ and $k\in K_{G (X)}$. Then $m_{P_0} (g)= M_0 (\A)^1m$. This is well-defined. We also define the analogous map $m_{P_{0,Z}}$ for $G (Z) (\A)$.

\begin{lemma}\label{lemma:rapid-dec-several-variable} Let $\sigma \in
\cA_\cusp (G (X))$ and $\phi\in \sigma$.  Then for all positive
integer $r$, there exists a constant $C$ such that
\begin{equation*} | \phi ( h g) | < C m_{P_0} (h)^{-r \rho_{P_0}}
  \end{equation*} for all $h\in G (Z) (\A)$ and $g\in G( V )(\A)$.
\end{lemma}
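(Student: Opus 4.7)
The plan is to bound $|\phi(hg)|$ by exploiting the fact that $G(Z)$ and $G(W)$ commute inside $G(X)$, so that an Iwasawa decomposition of $h$ along $P_{0,Z}$ inside $G(Z)$ extends naturally to an Iwasawa decomposition of $hg$ along $P$ inside $G(X)$ whose torus component depends only on $h$. The bound will then follow from the rapid-decrease property of cuspidal automorphic forms along the proper parabolic $P$.

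First I apply reduction theory to $G(Z)$ and $G(W)$ separately: choose $\gamma_Z \in G(Z)(F)$ and $\gamma_W \in G(W)(F)$ such that $h' := \gamma_Z h$ lies in a fixed Siegel set of $G(Z)$ relative to $P_{0,Z}$, and $g' := \gamma_W g$ lies in a fixed Siegel set of $G(W)$ relative to some minimal parabolic of $G(W)$. Since $G(Z)$ and $G(W)$ commute in $G(X)$ and both $\gamma_Z, \gamma_W$ lie in $G(X)(F)$, the left $G(X)(F)$-invariance of $\phi$ yields $\phi(hg) = \phi(\gamma_Z \gamma_W hg) = \phi(h'g')$, so it suffices to bound $|\phi(h'g')|$. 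Write the Iwasawa decomposition $h' = n_Z a_Z m_{0,Z} k_Z$ relative to $P_{0,Z}$ on $G(Z)$, where $n_Z$ lies in a fixed compact subset of $N_{P_{0,Z}}(\A)$, $a_Z \in A_{P_{0,Z}}^+$ is in the dominant chamber, $m_{0,Z} \in G(Z_0)(\A)$ lies in a compact fundamental domain (here $Z_0$ is the anisotropic kernel of $Z$), and $k_Z \in K_Z \subset K_X$.

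Since $k_Z \in G(Z)(\A)$ commutes with $g' \in G(W)(\A)$, we may rewrite
\[
h' g' = n_Z\, a_Z\, (m_{0,Z}\, g')\, k_Z.
\]
The inclusions $N_{P_{0,Z}} \subset N_P$ and $A_{P_{0,Z}} = A_P$ (since $P$ and $P_{0,Z}$ stabilise the same isotropic flag in $Z$), together with $G(Z_0) \cdot G(W) \subset G(Z_0 \oplus W) \subset M_P$, show that the right-hand side is an Iwasawa decomposition of $h'g'$ along $P$ in $G(X)$: its torus part is exactly $a_Z$ and its Levi part $m_{0,Z} g'$ lies in a Siegel set of $M_P(\A)$.

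Finally I invoke the rapid-decrease property of cusp forms along the proper parabolic $P$. Because $\phi$ is cuspidal on $G(X)$, its constant term along $P$ vanishes identically, and the standard bound (Langlands, Moeglin--Waldspurger) yields, for every $N > 0$, a constant $C_N$ such that $|\phi(n a m k)| \leq C_N\, a^{-N\rho_P}$ uniformly for $n$ in a compact subset of $N_P(\A)$, $a \in A_P^+$ dominant, $m$ in a fixed Siegel set of $M_P(\A)$, and $k \in K_X$. Applied to our decomposition, this gives $|\phi(h'g')| \leq C_N\, a_Z^{-N\rho_P} = C_N\, \rho_P(h')^{-N}$, hence $|\phi(hg)| \leq C_N\, \rho_P(h)^{-N}$ as claimed. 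The main obstacle is exactly the uniformity in the Levi variable $m \in M_P$: the decay estimate must be independent of $m$ once $m$ is confined to a Siegel set, so that the arbitrary variable $g \in G(W)(\A)$ can be absorbed into the constant $C_N$. This uniformity is the nontrivial input from the theory of cusp forms, encoded in the simultaneous vanishing of all constant terms of $\phi$ along proper parabolics; the remainder of the argument is bookkeeping made possible by the fact that $G(W)$ sits inside the Levi of $P$ and commutes with $G(Z)$.
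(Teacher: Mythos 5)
Your proof follows the same basic strategy as the paper's (commutativity of $G(Z)$ and $G(W)$, Iwasawa decomposition landing $hg$ in $P(\A)$, then rapid decay along $P$), but it introduces an additional layer — reduction theory on both $h$ and $g$ — that the paper does not use, and that extra layer creates a gap in your final step.

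The paper's argument is shorter: write $h = p k$ with $p \in P_{0,Z}(\A)$ and $k \in K_Z$ via Iwasawa; then $hg = pg k$ with $pg \in P(\A)$, $\rho_P(hg) = \rho_P(p) = \rho_P(h)$, and the bound follows directly from [Thm.\ D]{MR2955198} (Miller--Schmid on rapid decay of cuspidal forms), without ever passing to Siegel sets. You instead replace $h$ by $h' = \gamma_Z h$ and $g$ by $g' = \gamma_W g$ for rational $\gamma_Z,\gamma_W$ to reach Siegel sets, which is a reasonable precaution if one only has the classical (Langlands, M\oe glin--Waldspurger) rapid-decay statement on Siegel sets rather than the stronger uniform result of Miller--Schmid. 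But in doing so you lose control of the quantity in the statement: at the end you write $|\phi(h'g')| \le C_N\, \rho_P(h')^{-N}$ and then conclude ``$= C_N\,\rho_P(h)^{-N}$.'' This equality is not justified. The element $\gamma_Z \in G(Z)(F)$ generally does not lie in $P(F)$, so the Iwasawa height $\rho_P(\gamma_Z h)$ is \emph{not} equal to $\rho_P(h)$. To salvage the conclusion you would need an inequality $\rho_P(\gamma_Z h) \ge c\,\rho_P(h)$ (equivalently $\rho_P(h')^{-N} \le c^{-N}\rho_P(h)^{-N}$) — i.e., that passing to a Siegel-set representative can only increase the $\rho_P$-height — which is a separate claim from reduction theory requiring justification, especially in higher rank where the Siegel-set dominance condition concerns the individual simple roots and not $\rho_P$ directly. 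If instead you simply bypass reduction theory and apply Miller--Schmid's Theorem D as the paper does, the equality $\rho_P(hg) = \rho_P(h)$ is exact and the issue disappears. A secondary point: the uniformity in the Levi variable that you correctly identify as ``the nontrivial input'' is precisely the content of the Miller--Schmid theorem cited in the paper; attributing it to the standard Langlands/M\oe glin--Waldspurger bounds undersells what is needed.
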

\begin{proof}
  Assume that $P_{0,Z}$ stabilises  the complete isotropic flag $Z^{(1)} \subset \cdots \subset Z^{(t)}$ of  $Z$. Let $P_1$ denote the parabolic subgroup of $G (X)$ stabilising the isotropic flag $Z^{(1)} \subset \cdots \subset Z^{(t)}$.
  We note that $M_{P_1} (\A)^1m_{P_0} (h) = M_{P_1} (\A)^1 m_{P_1} (hg)$ and $\rho_{P_0} = \rho_{P_1}$ when restricted to the image of $M_{0,Z} (\A)$ in $M_0 (\A)$. Then by \cite [Thm. D] {MR2955198}, for
all positive integer $r$, there exists a constant $C$ such that
$| \phi ( h g) | < C m_{P_0} (h)^{-r \rho_{P_0}}$. 
\end{proof}

Let $v_0\in V$ and let $v_0^+\in V$ be a dual vector to $v_0$. Let $U$ be the orthogonal complement of $Fv_0^+\oplus Fv_0$ in $V$. Here we assume that $P_0$ is compatible with $P_{0,Fv_0^+\oplus Z \oplus Fv_0}$.
\begin{lemma}\label{lemma:rapid-dec-with-n} Let $\sigma \in \cA_\cusp
(G (X))$ and $\phi\in \sigma$.  Then for all positive integer $r$
there exists a constant $C$ such that
\begin{equation*} | \phi (n h g) | < C m_{P_0}(h)^{-r\rho_{P_0}}
  \end{equation*} for all $h\in G (Fv_0^+\oplus Z \oplus Fv_0) (\A)$, $g\in G (U) (\A)$ and $n \in
N (X,Fv_0) (\A)$.
\end{lemma}
\begin{proof}
Assume that $P_{0,Z}$ stabilises  the complete isotropic flag $Z^{(1)} \subset \cdots \subset Z^{(t)}$ of  $Z$. Let $P_2$ denote the parabolic subgroup of $G (X)$ stabilising the isotropic flag $Fv_0\subset Z^{(1)}\oplus Fv_0 \subset \cdots \subset Z^{(t)}\oplus Fv_0$.
Then $M_{P_2} (\A)^1m_{P_0} (h) = M_{P_2} (\A)^1 m_{P_2} (nhg)$ and $\rho_{P_0} = \rho_{P_2}$ when restricted to the image of $M_{0,Fv_0^+\oplus Z \oplus Fv_0} (\A)$ in $M_0 (\A)$. Thus by
by \cite [Thm. D] {MR2955198}, for
all positive integer $r$, there exists a constant $C$ such that
$| \phi (n h g) | < C  m_{P_0}(h)^{-r\rho_{P_0}}$. 
\end{proof}

Finally we need an estimate of theta series. For $h\in G (X) (\A)$, let $||h||$ denote the height of $h$ as defined in \cite [I.2.2] {MR1361168}.
\begin{lemma}\label{lemma:theta-bound} Fix $\Phi \in \cS_{X_1,Y}
(\A)$.  There exist $C>0$, $r>0$ and $R>0$ such that
  \begin{equation*} |\theta_{X_1,Y} (n m_1 (t)h k,1,\Phi)| < C
(|t|_{\A}^{\frac{\dim Y}{2}}+ |t|_{\A}^{-r}) || h ||^R
  \end{equation*} for all $n \in N_{1} (\A)$, $t\in \GL_1 (\A)$, $h\in
  G (X) (\A)$ and $k\in K_{G (X_1)}$.
\end{lemma}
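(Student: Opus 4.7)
The plan is to unfold the theta series via the mixed Schr\"odinger model with polarization $(Y \otimes X_1)^+(\BA) = (Y \otimes X)^+(\BA) \oplus (Y \otimes \ell_1^+)(\BA)$ and then combine the explicit Weil representation formulas with a Poisson summation estimate in the $(Y\otimes\ell_1^+)$-direction. Using the standard formulas (see, e.g., \cite{ikeda96:_resid_of_eisen_series_and}), the action of $n m_1(t) h k$ on $\Phi \in \cS_{X_1,Y}(\BA)$ has the following structure: $n \in N_1(\BA)$ acts by a unitary operator (a character of absolute value one composed with a translation along $(Y\otimes\ell_1^+)$); $m_1(t)$ multiplies by $\chi_Y(t)|t|_\BA^{\dim Y/2}$ and dilates the $(Y\otimes\ell_1^+)$-variable by $t$; $h \in G(X)$ acts via the Weil representation $\omega_{\psi,X,Y}$ of the smaller pair on the $(Y\otimes X)^+$-variable; and $k \in K_{X_1}$ acts smoothly over a compact set.

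Since the theta series is $N_1(F)$-invariant and $N_1(F)\backslash N_1(\BA)$ is compact, we may restrict attention to $n$ in a bounded set. Combined with the smoothness of the Weil representation on the compact group $K_{X_1}$, this allows us to find Schwartz majorants $\Phi^{(1)} \in \cS((Y\otimes\ell_1^+)(\BA))$ and $\Phi^{(2)} \in \cS((Y\otimes X)^+(\BA))$ with $| \omega_{\psi,X_1,Y}(n k)\Phi(y_1, y_2) | \leq \Phi^{(1)}(y_1) \Phi^{(2)}(y_2)$ uniformly in $n, k$. Plugging into the theta series and separating variables, we arrive at
\begin{equation*}
  | \theta_{\psi,X_1,Y}(n m_1(t) h k, 1, \Phi) | \leq |t|_\BA^{\dim Y/2} \biggl( \sum_{w_1 \in (Y\otimes\ell_1^+)(F)} \Phi^{(1)}(t w_1) \biggr) \cdot \bigl| \theta_{\psi,X,Y}(h, 1, \Phi^{(2)'}) \bigr|,
\end{equation*}
where $\Phi^{(2)'}$ is obtained from $\Phi^{(2)}$ by the $h$-independent actions.

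The two remaining factors are controlled separately. The theta series on $G(X)$ is of moderate growth, so $|\theta_{\psi,X,Y}(h, 1, \Phi^{(2)'})| \leq C' \hgt{h}^R$ for some $C', R > 0$; this is a standard estimate obtained by reducing to Siegel domains of $G(X)$ and using polynomial bounds for the Weil representation action on Schwartz functions. For the outer sum over $w_1$, apply Poisson summation on the lattice $(Y\otimes\ell_1^+)(F)$: writing the sum as $|t|_\BA^{-\dim Y} \sum_{\hat{w}_1} \widehat{\Phi^{(1)}}(\hat{w}_1/t)$, the transformed expression is uniformly bounded for $|t|_\BA \leq 1$ by Schwartz decay of $\widehat{\Phi^{(1)}}$, while the original sum is uniformly bounded for $|t|_\BA \geq 1$ by Schwartz decay of $\Phi^{(1)}$. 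Hence $\sum_{w_1} \Phi^{(1)}(t w_1) \leq C''(1 + |t|_\BA^{-\dim Y})$.

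Multiplying the three bounds gives $C \hgt{h}^R (|t|_\BA^{\dim Y/2} + |t|_\BA^{-\dim Y/2})$, which is the claimed estimate with $a = \dim Y/2$. The main technical obstacle is uniformity: one must verify that a single majorant $\Phi^{(1)} \otimes \Phi^{(2)}$ suffices for all $(n, k)$ in the relevant bounded sets, and that the polynomial bound $\hgt{h}^R$ for the $G(X)$-theta series is uniform over the parametric family of Schwartz functions $\Phi^{(2)'}$ produced by the unfolding. Both points are standard but require careful bookkeeping of Schwartz seminorms under the Weil representation.
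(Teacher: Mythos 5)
Your overall strategy (reduce $n$ to a compact set, majorize the Weil representation by a product Schwartz function, estimate the resulting lattice sums) points in the right direction, but there is a genuine gap in the variable separation that stems from a misreading of how $N_1(\BA)$ acts in the mixed Schr\"odinger model. You assert that $n\in N_1(\BA)$ acts by a character of modulus one composed with a translation along $(Y\otimes\ell_1^+)$. In fact, with the model realized on $(Y\otimes\ell_1^+)(\BA)\oplus (Y\otimes X^+)(\BA)$, the ``$\mu$-part'' of $N_1\cong\cH(X)$ acts by a shear $(y,w)\mapsto (y,\,w+y\mu^+)$ (times a unimodular phase): it translates the $(Y\otimes X^+)$-coordinate by an amount \emph{depending on the $(Y\otimes\ell_1^+)$-coordinate}. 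One can see this in the paper's own formula, where after applying $n m_1(\tau_t)h$ with $h$ in the diagonal torus the argument of $\Phi$ becomes $(y\tau_t,(y\mu+w)\tau_a)$. Since $\mu\in X^+(\BA)$ only ranges over a compact fundamental domain (not $X^+(F)$), the shifted lattice sum over $w$ cannot be re-indexed away, and the shear does not factor through a product $\Phi^{(1)}(y_1)\Phi^{(2)}(y_2)$ applied term-by-term. The further identification of the inner $w$-sum with $\lvert\theta_{\psi,X,Y}(h,1,\Phi^{(2)'})\rvert$ compounds the problem, because what is actually being bounded is $\sum_w\lvert\omega_{\psi,X,Y}(h)\Phi^{(2)}(w+y\mu)\rvert$, a coupled absolute sum rather than a signed theta series.

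The proof in the paper sidesteps precisely this obstruction. After reducing $n$ to a fixed element of the Siegel Levi, $h$ to the diagonal torus $A_{0,G(X)}$ via reduction theory, and $t$ to $\R_{>0}$, it keeps the shear $y\mu+w$ \emph{inside} the double sum, passes to a Schwartz majorant (Weil's Lemme~5), arranges the nonarchimedean part of the lattice so that $y_f\mu_f$ stays inside $\Lambda_2$, and then bounds the archimedean contribution by an explicit pointwise estimate $\Phi_\infty(y,w)\le C_\alpha\prod_j\prod_i(1+\lvert y_j\rvert^\alpha)^{-1}(1+\lvert w_{ij}\rvert^\alpha)^{-1}$. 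The $t$-dependence and the $h$-dependence then come out \emph{from the same double sum}, giving $\sup\{1,t^{-\alpha}\}^{\dim Y}\cdot\prod_i\sup\{1,a_i^{-\alpha}\}^{\dim Y}$, multiplied by $\lvert t\det a\rvert^{\dim Y/2}$; there is no independent ``moderate growth for the theta series on $G(X)$'' input, and the exponent $a$ is whatever comes out, not the specific value $\dim Y/2$. To repair your proposal you would need either (i) to verify that $\sup_{\lVert\mu\rVert\le M}\lvert\Phi(y,w+y\mu)\rvert$ admits a product Schwartz majorant uniformly in the compact parameter $\mu$, keeping careful track of the adelic structure (true but nontrivial), \emph{and} control $\sup_{c\in\BA}\sum_{w\in F^N}\lvert\omega_{\psi,X,Y}(h)\Psi(w+c)\rvert$ polynomially in $\lVert h\rVert$ with uniformity over the $y$-parametric family $\Psi=\Phi^{(2)}(ty,\cdot)$; or (ii) to abandon the clean separation and proceed as the paper does. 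The Poisson summation idea for the $t$-sum is a legitimate alternative to the paper's direct estimate, but it too needs the reduction of $t$ to $\R_{>0}$ (modulo $F^\times$ and a compact factor); without it the decomposition $\lvert t\rvert_\BA\le 1$ versus $\lvert t\rvert_\BA\ge 1$ does not control the local components of $w_1/t$.
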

\begin{proof} First we fix a basis for $Y$ and a symplectic basis for
$X$. Thus we have a polarisation: $X = X^+ \oplus X^-$. The Schwartz
space $\cS_{X_1,Y} (\A)$ is realised on the space $\cS(Y\otimes \ell_1^+
(\A) \oplus Y\otimes X^+ (\A))$.  We may assume that $n$ lies in a
compact set because the theta series is invariant under $N_1 (F)$ on the left. Then by continuity we only need to show the bound for
fixed $n$.  Furthermore via reduction theory we may assume that $h$
lies in the Siegel domain $\omega A_{0,G (X)} (t_0) K_{G (X)}$ of $G
(X)$. We refer the reader to \cite [I.2.1] {MR1361168} for
notation. The only difference is that we add in subscripts to indicate
the group under consideration. Since
$\cup_{a\in A_{0,G (X)} (t_0)} a^{-1}\omega a$ 
is relatively compact in $P_0 (\A)$, we may
assume further that $h\in A_{0,G (X)}$. We may also assume that $t$ lies in
$\A_\infty^\times$ rather than $\A^\times$. It is not essential if $h$ lies in $A_{0,G (X)} (t_0)$ or the bigger set $A_{0,G (X)}$.
Let $\kappa = [F:\Q]$. Given $a\in \R_{>0}$ we let $\tau_a$ be
the element in $\A_\infty^\times$ which is equal to $a^{1/\kappa}$ at any
infinite place. Write $h = m (\tau_{a_1},\ldots,\tau_{ a_r})$ for
$a_i\in \R_{>0}$ where $r=\dim X/2$. Let $a=\diag \{a_1,\ldots, a_r\}$
and $\tau_a =\diag \{\tau_{a_1},\ldots,\tau_{ a_r}\}$. We may replace $t$
by $\tau_{t}$ with $t\in \R_{>0}$. Thus we are reduced to showing that there
exists $C>0$, $a>0$ and $R>0$ such that
\begin{equation*} |\theta_{X_1,Y} (n m_1 (\tau_t)h k,1,\Phi)| < C
(|t|^{\frac{\dim Y}{2}}+ |t|^{-r}) || h ||^R
\end{equation*} for fixed $n$, for all $t\in \R_{>0}$ and $ h=m (\tau_{a_1},\ldots,\tau_{ a_r})\in A_{0,G (X)}$ and  $k\in
K'_{G (X_1)}$ where $K'_{G (X_1)}$ is some compact subset of $G (X_1) (\A)$.

Via explicit formulae of Weil representation, we find
\begin{align*} &|\theta_{X_1,Y} (n m_1 (\tau_t)h k,1,\Phi)| =
|\sum_{\substack{ y\in Y\otimes \ell_1^+ \\ w \in Y\otimes X^+ }}
\omega_{X_1,Y} (n m_1 (\tau_t)h k,1)\Phi (y,w)| \\ \le &
\sum_{\substack { y\in Y\otimes \ell_1^+ \\ w \in Y\otimes X^+}}
|t\det a|^{\dim Y/2}|\omega_{X_1,Y} ( k,1)\Phi (y\tau_t,(y\mu +
w)\tau_a)| ,
\end{align*} where $\mu \in (\Hom_F (\ell_1^+,X)) (\A)$ is determined
by $n$ and can be viewed as a row vector of length $\dim X$.  By \cite
[Lemme~5] {MR0165033} there exists $\Phi_0\in \cS_{X_1,Y} (\A)$ such
that
\begin{align*} |\omega_{X_1,Y} ( k,1)\Phi (y,w)| \le \Phi_0 (y,w)
\end{align*} for all $k\in K'_{G (X_1)}$, $y\in Y\otimes \ell_1^+ (\A)$
and $ w \in Y\otimes X^+ (\A)$. Henceforth we replace $\Phi$ by
$\Phi_0$ but still write it as $\Phi$.  The problem reduces to finding
a bound for
\begin{equation}\label{eq:sum-theta} \sum_{\substack { y\in Y\otimes
\ell_1^+ \\ w \in Y\otimes X^+}} \Phi (y\tau_t,(y\mu + w)\tau_a).
\end{equation}
We may assume that $\Phi$ can be separated into archimedean and finite part: $\Phi = \Phi_\infty \otimes \Phi_f$
with $\Phi_f$ being the characteristic function of some open compact
subgroup $C_f^{(1)} \times C_f^{(2)}$ of $Y\otimes \ell_1^+ (\A_f)
\oplus Y\otimes X^+ (\A_f)$. This open compact subgroup determines a
lattice $\Lambda_1\oplus \Lambda_2$ in $Y\otimes \ell_1^+ \oplus
Y\otimes X^+ $. Furthermore we may assume that after suitably
enlarging $C_f^{(2)}$, for all $y_f\in C_f^{(1)} $, we have $y_f\mu_f \in
C_f^{(2)}$ where $\mu_f$ is the finite part of $\mu$. Then \eqref{eq:sum-theta} is equal to
\begin{equation}\label{eq:sum-theta-2} \sum_{\substack { y\in
\Lambda_1 \\ w \in \Lambda_2}} \Phi_\infty (y\tau_t,(y\mu_{\infty} +
w)\tau_a).
\end{equation} We adapt the basis for $ Y\otimes \ell_1^+ \oplus
Y\otimes X^+$ to a $\Q$-basis of $\Res_{F/\Q} ( Y\otimes \ell_1^+
\oplus Y\otimes X^+)$. In fact we just need to consider $Y$ as a
$\kappa\dim_F Y$-dimensional $\Q$-vector space.  After scaling we can
make $\Lambda_1$ and $\Lambda_2$ integral with respect to the basis.
Then in our case, the bound in \cite [P.~21] {MR0223373} reads
\begin{equation*} \Phi_\infty (y,w) \le C_\alpha \prod_{j=1}^{\kappa
\dim Y} \prod_{i=1}^{\dim X/2} (1 + |y_j|^\alpha)^{-1} (1+
|w_{ij}|^\alpha)^{-1}.
\end{equation*} for $\alpha >1$. Since we have fixed a basis, $y$ and $w$ can be viewed as matrices and the subscripts indicate the corresponding entry. The linear map $\mu$  also becomes a matrix (or rather a vector). Thus \eqref{eq:sum-theta-2} is
bounded by
\begin{align*} &C_\alpha \prod_{j=1}^{\kappa \dim Y} \prod_{i=1}^{\dim
X/2} \sum_{m=-\infty}^{\infty} \sum_{n=-\infty}^{\infty} (1 +
|t^{1/\kappa} n|^\alpha)^{-1} (1+ | a_i^{1/\kappa}(n\mu_{\infty,j} +
m) |^\alpha)^{-1} \\ \le & C'_{\alpha} \sup \{1 , t^{-\alpha}\}^{\dim
Y} \prod_{i=1}^{\dim X/2}\sup \{1,a_i^{-\alpha}\}^{\dim Y}.
\end{align*} Thus the theta series is bounded by the above multiplied
by $|t\det a|^{\dim Y/2}$. We see that the $t$-part of the bound is bounded
by $|t|_{\A}^{\frac{\dim Y}{2}}+ |t|_{\A}^{-c}$ and that the $h$-part of
the bound is bounded by
$\prod_{i=1}^{\dim X/2}\sup \{a_i^{\dim
Y/2},a_i^{-c}\}$
for some $c>0$. From the definition of  height $||h||$ of $h$, we get our desired
bound.
\end{proof}

\subsection{Absolute Convergence of $I_{\Omega_{0,1}}$}
\label{sec:I-Omega01-abs-conv}

In order to show absolute convergence of $I_{\Omega_{0,1}} (\xi)$ (c.f. \eqref{eq:I-Omega01}), we just need
to show absolute convergence of
\eqref{eq:I-Omega01-for-abs-conv}. Assume $\xi=\xi_{c,s}$. Then using
Lemma~\ref{lemma:theta-bound} and \cite
[Thm. A] {MR2955198}, we find that for all $n>0$ there exists a constant $C$ such that the integrand is bounded by
\begin{equation*} C(|t|_\A^{\dim Y/2} + |t|_\A^{-r}) || h ||^{-n}
|t|_\A^{s - 2\rho_{Q (Z_1,Fe_1^-)} + \rho_{Q_1}} \hat {\tau}_c (m_1
(t))
\end{equation*} for some $r>0$ and all $h$ in a Siegel
domain of $G (Z_1) (\A)$. Thus integration over $h\in [G (Z)]$
converges.  Thanks to truncation that gives upper bound to $|t|_\A$,
integration over $t$ also converges as long as $\Re s$ is large enough
to make real part of exponents of $|t|_\A$ positive. Integration over $n \in [N
(Z_1,Fe_1^-)]$ and $k\in K_{G (Z_1)}$ is compact integration. Thus we
have shown that $I_{\Omega_{0,1}} (\xi_{c,s})$ is absolutely
convergent for $\Re s $ large enough. Similarly we can check absolute
convergence of $I_{\Omega_{0,1}} (\xi_s^c)$ for $\Re s $ large enough.

\begin{lemma}
  For $\Re s $ large enough, $I_{\Omega_{0,1}} (\xi_{c,s})$ and $I_{\Omega_{0,1}} (\xi_s^c)$ are absolutely convergent.
\end{lemma}

\subsection{Absolute Convergence of $I_{\Omega_{1,0}}$}
\label{sec:I-Omega10-abs-conv}

Next we consider absolute convergence of $I_{\Omega_{1,0}} (\xi)$ (c.f.  \eqref{eq:I-Omega10}). We
need to make our choice of $\gamma_{Fv}$'s more uniform. Fix $v_0\in V$
and a dual vector $v_0^+ \in V$. Let $U$ be the subspace of $V$ such
that $V=Fv_0^+ \oplus U \oplus Fv_0$. The set $V-\{0\} / F^\times$ is
parametrised by $Q (V,Fv_0)\lmod G (V)$. We use the correspondence: $\eta \mapsto Fv_0\eta$ between 
$Q (V,Fv_0)\lmod G (V)$ and $V-\{0\} / F^\times$.
For $v\in V-\{0\}$ we write $\eta_{Fv}$ for its preimage.
Fix $\gamma_{Fv_0}$ as in Sec.~\ref{sec:I-Omega10}. Note that it is in
$K _{G (X_1)}$. Then we may pick $\gamma_{Fv}$ to be
$\gamma_{Fv_0}\eta_{Fv}$. We rewrite $I_{\Omega_{1,0}}$ as
 \begin{align*} \int_{[G(Z_1)]} \sum_{\eta \in Q (V,Fv_0)\lmod G (V)}
\xi (\gamma_{Fv_0}\eta g)dg.
 \end{align*} Assume that $\xi=\xi_{c,s}$. In fact the theta part is
invariant under $G (X_1) (F)$. Thus for $g\in G (Z_1) (\A)$, we have
\begin{align*} \xi_{c,s} (\gamma_{Fv_0}\eta g) = &f_s
(\gamma_{Fv_0}\eta g) \theta_{X_1,Y} (\gamma_{Fv_0}g,1,\Phi)\hat
{\tau}_c (H (\gamma_{Fv_0}\eta g))\\ = &f_s (\gamma_{Fv_0}g \eta)
\theta_{X_1,Y} (\gamma_{Fv_0}g,1,\Phi)\hat {\tau}_c (H
(\gamma_{Fv_0}g\eta )).
\end{align*} Conjugate $\gamma_{Fv_0}$ across $g$ we get an integral
\begin{align*} \int_{[G(Fv_0^+ \oplus Z \oplus Fv_0)]} \sum_{\eta \in
Q (V,Fv_0)\lmod G (V)} f_s (g \gamma_{Fv_0} \eta) \theta_{X_1,Y} (g
\gamma_{Fv_0},1,\Phi)\hat {\tau}_c (H (\gamma_{Fv_0} \eta ))dg.
 \end{align*} We use $\{1\} \sqcup w_0N (V, Fv_0)$ as representatives
for $Q (V,Fv_0)\lmod G (V)$ where $w_0$ is the image in $G (V)$ of the
element that is
  $\smatrix{0}{1}{-1}{0}$,
with respect to the basis $v_0^+, v_0$. For $\eta = 1
$, the integral is absolutely convergent. We just need to show
absolute convergence for
\begin{align*} &\int_{[G(Fv_0^+ \oplus Z \oplus Fv_0)]} \sum_{\eta \in
N (V,Fv_0)} f_s (g \gamma_{Fv_0} w_0 \eta) \theta_{X_1,Y} (g
\gamma_{Fv_0},1,\Phi)\hat {\tau}_c (H (\gamma_{Fv_0} w_0 \eta ))dg \\
=&\int_{[G(Fv_0^+ \oplus Z \oplus Fv_0)]} \sum_{\overline {\eta} \in
\overline {N} (V,Fv_0)} f_s (g \gamma_{Fv_0} \overline {\eta} w_0)
\theta_{X_1,Y} (g \gamma_{Fv_0},1,\Phi)\hat {\tau}_c (H (\gamma_{Fv_0}
\overline {\eta} ))dg \\ =&\int_{[G(Fv_0^+ \oplus Z \oplus Fv_0)]}
\sum_{\overline {\eta} \in \overline {N} (U_1,Fe_1^-)} f_s (\overline
{\eta}g\gamma_{Fv_0} w_0) \theta_{X_1,Y} (g \gamma_{Fv_0},1,\Phi)\hat
{\tau}_c (H (\overline {\eta}))dg .
\end{align*} Write $\overline {\eta} = n_{\overline {\eta} } m_1
(a_{\overline {\eta} }) h_{\overline {\eta} } k_{\overline {\eta} }$
in the Iwasawa decomposition with $n_{\overline {\eta} } \in N
(U_1,Fe_1^-) (\A)$, $a_{\overline {\eta} }\in \GL_1 (\A)$,
$h_{\overline {\eta} }\in G (U (\A))$ and $k_{\overline {\eta} } \in
K_{G (U_1)}$. Then the above is equal to
\begin{align*} &\int_{[G(Fv_0^+ \oplus Z \oplus Fv_0)]}
\sum_{\overline {\eta} \in \overline {N} (U_1,Fe_1^-)} |a_{\overline
{\eta} }|_\A^{s+\rho_{Q_1}} f_s (h_{\overline {\eta} }k_{\overline
{\eta}} g\gamma_{Fv_0} w_0) \theta_{X_1,Y} (g
\gamma_{Fv_0},1,\Phi)\hat {\tau}_c (H ( m_1 (a_{\overline {\eta}
})))dg \\ =&\int_{[G(Fv_0^+ \oplus Z \oplus Fv_0)]} \sum_{\overline
{\eta} \in \overline {N} (U_1,Fe_1^-)} |a_{\overline {\eta}
}|_\A^{s+\rho_{Q_1}} f_s ( gh_{\overline {\eta} }k_{\overline
{\eta}}\gamma_{Fv_0} w_0) \theta_{X_1,Y} (g \gamma_{Fv_0},1,\Phi)\hat
{\tau}_c (H ( m_1 (a_{\overline {\eta} })))dg .
\end{align*}
By Lemma~\ref{lemma:rapid-dec-several-variable}, we have a uniform bound
for $f_s (g h_{\overline {\eta}}k_{\overline {\eta}}\gamma_{Fv_0} w_0)$ which is in the space of the
cuspidal representation $\sigma$, i.e., for all $r_1>1$ there exists
$C_1>0$ such that
 \begin{align*} |f_s ( g h_{\overline {\eta} }k_{\overline
   {\eta}}\gamma_{Fv_0} w_0)| < C_1  m_{P_0} (g)^{-r_1\rho_{P_0}}
 \end{align*} for all $g \in G (Fv_0^+ \oplus Z
\oplus Fv_0) (\A)$ and all $\overline {\eta} \in \overline {N}
(U_1,Fe_1^-)$. Also $\theta_{X_1,Y}$ is
of moderate growth in $g$, i.e., there exist $C_2, r_2 \in \R$ such that
\begin{align*} |\theta_{X_1,Y} (g \gamma_{Fv_0},1,\Phi)| < C_2 ||g||^{r_2}
\end{align*} for all $g\in G (Fv_0^+ \oplus Z \oplus Fv_0) (\A)$. Thus
integration over $[G(Fv_0^+ \oplus Z \oplus Fv_0)]$ is absolutely
convergent. It remains to show that
$$\sum_{\overline {\eta} \in \overline {N} (U_1,Fe_1^-)}
|a_{\overline {\eta} }|_\A^{s+\rho_{Q_1}}\hat {\tau}_c (H ( m_1
(a_{\overline {\eta} })))$$
is convergent or a fortiori
$\sum_{\overline {\eta} \in \overline {N} (U_1,Fe_1^-)}
|a_{\overline {\eta} }|_\A^{s+\rho_{Q_1}}$ is.  We find a bound for this series. It is equal to
$\sum_{ \eta \in \overline {N} (U_1,Fe_1^-) w_0}
|a_{\eta }|_\A^{s+\rho_{Q_1}}$,
which is bounded above by
\begin{equation*}
\sum_{ \eta \in \{1\} \sqcup \overline {N}
(U_1,Fe_1^-) w_0} |a_{\eta}|_\A^{s+\rho_{Q_1}} =\sum_{ \eta \in Q
(U_1,Fe_1^-)\lmod G (U_1)} |a_{\eta}|_\A^{s+\rho_{Q_1}}.
\end{equation*}
The last one is an Eisenstein series. Thus for $\Re s$ large
enough this is convergent. In fact, truncation is not necessary for
absolute convergence for $I_{\Omega_{1,0}} (\xi_{c,s})$.  Let $\xi =
\xi_s^c$. Via a similar argument we reach the step where we need to
show convergence of
$\sum_{\overline {\eta} \in \overline {N} (U_1,Fe_1^-)}
|a_{\overline {\eta} }|_\A^{-s+\rho_{Q_1}}\hat {\tau}^c (H ( m_1
(a_{\overline {\eta} })))$.
This is a finite sum because of truncation. Thus
convergence is trivial.

\begin{lemma}
  For $\Re s$ large enough, $I_{\Omega_{1,0}}(\xi_{\infty,s})$ (i.e., without truncation) is absolutely convergent. For any $s$ where $M (w,s)$ is defined, $I_{\Omega_{1,0}}(\xi_s^c)$ is absolutely convergent.
\end{lemma}

\subsection{Absolute Convergence of $I_{\Omega_{1,1}}$}
\label{sec:I-Omega11-abs-conv} We study the absolute convergence of $I_{\Omega_{1,1}}$ (c.f. \eqref{eq:I-Omega11}). We make use of notation set up in
Sec.~\ref{sec:I-Omega10-abs-conv}. Thus we also fix $v_0, v_0^+$ in
$V$ and make choices of $\gamma_{Fv}$ as in
Sec.~\ref{sec:I-Omega10-abs-conv}. Let $U$ be the subspace of $V$ such
that $V = Fv_0^+ \oplus U \oplus Fv_0$. For $t\in \GL_1$, we let $m_1'
(t)$ denote the element in $G (Fv_0^+\oplus Fv_0)$ given by
$\smatrix{t}{0}{0}{t^{-1}}$.
Write $\gamma_0$ for $\gamma_{F (v_0+e_1^-)}$.  We need
to consider absolute convergence for
$\int_{J (Z_1,e_1^-) \lmod G (Z_1) (\A)} \sum_{\eta \in
Q (V,Fv_0)} \xi (\gamma_0\eta g)dg$, 
which is formally equal to
\begin{equation*} \int_{K_{G (Z_1)}} \int_{[N (Z_1,Fe_1^-)] }
\int_{\GL_1 (\A)} \int_{[G (Z)]}\sum_{\eta \in Q (V,Fv_0) \lmod G (V)}
\xi (\gamma_0\eta ng m_1 (t) k ) |t|_\A^{-2\rho_{Q (Z_1,Fe_1^-)}} dg dt
dn dk.
\end{equation*} First let $\xi = \xi_{c,s}$. As observed in
Sec.~\ref{sec:I-Omega10-abs-conv}, the theta part is invariant under
$G (X_1) (F)$.  We have bound for the theta part by
Lemma~\ref{lemma:theta-bound}:
\begin{align*} |\theta_{X_1,Y} (n g m_1 (t) k) | < C (|t|_\A^{\dim
Y/2} + |t|_\A^{-R_1}) ||g||^{R_2}
\end{align*} for some constants $C, R_1, R_2>0$. Thus it suffices to
consider
\begin{align*} \int_{K_{G (Z_1)}} \int_{[N (Z_1,Fe_1^-)] } \int_{\GL_1
(\A)} \int_{[G (Z)]} \sum_{\eta \in Q (V,Fv_0) \lmod G (V)} f_s
(\gamma_0\eta ng m_1 (t) k ) \\ \hat {\tau}_c (\gamma_0\eta ng m_1 (t)
k ) |t|^{R} ||g||^{R_2} dg dt dn dk.
\end{align*} for some constants $R \in \R$ and $R_2 >0$.  Note that
$\gamma_0\eta n gm_1 (t) = n'g \gamma_0 m_1 (t) \eta$
for $n' \in N (Fv_0^+ \oplus Z \oplus Fv_0,Fv_0) (\A)$.  Thus
we are reduced to showing absolute convergence for
\begin{align*} & \int_{K_{G (Z_1)}} \int_{[N (Fv_0^+ \oplus Z \oplus
Fv_0,Fv_0)] } \int_{\GL_1 (\A)} \int_{[G (Z)]} \sum_{\eta \in Q (V,Fv_0)
\lmod G (V)}f_s (n'g \gamma_0 m_1 (t) \eta k )\\ &\quad \hat {\tau}_c
(H (\gamma_0 m_1 (t)\eta)) |t|_\A^{R} ||g||^{R_2} dg dt dn' dk.
\end{align*} As in Sec.~\ref{sec:I-Omega10-abs-conv} we use
$\{1\}\sqcup \overline {N} (V,Fv_0)w_0 $ as representatives for $ Q
(V,Fv_0) \lmod G (V)$ and as $w_0\in K_{G (X_1)}$ we just need to
consider absolute convergence for
\begin{align*} & \int_{K_{G (Z_1)}} \int_{[N (Fv_0^+ \oplus Z \oplus
Fv_0,Fv_0)] } \int_{\GL_1 (\A)} \int_{[G (Z)]} \sum_{\overline {\eta} \in
\overline {N} (V,Fv_0) }f_s (n'g \gamma_0 m_1 (t) \overline {\eta} k
)\\ &\quad \hat {\tau}_c (H (\gamma_0 m_1 (t)\overline {\eta}))
|t|_\A^{R} ||g||^{R_2} dg dt dn' dk.
\end{align*}
Write
$\overline {\eta} = n_{\overline {\eta}}' m_1'
(a_{\overline {\eta}})h_{\overline {\eta}}k_{\overline {\eta}}
$
in the Iwasawa decomposition
\begin{align*} G (V) (\A) = N (V,Fv_0) (\A) M (V,Fv_0) (\A) K_{G (V)}.
\end{align*} Here $M (V,Fv_0) \isom \GL_1 \times G (U)$. Then
$$\gamma_0 m_1 (t)\overline {\eta} = \gamma_0
n_{\overline {\eta}}' m_1' (a_{\overline {\eta}})h_{\overline {\eta}}
m_1 (t) k_{\overline {\eta}}= (\gamma_0 n_{\overline {\eta}}'
\gamma_0^{-1}) \gamma_0 m_1' (a_{\overline {\eta}})h_{\overline
{\eta}} m_1 (t) k_{\overline {\eta}}.
$$
If we write
$n_{\overline {\eta}}' =
  \begin{pmatrix} 1 & \mu &\alpha\\ & I & \nu \\ && 1
  \end{pmatrix}$,
with respect to the `basis' $v_0^+, U, v_0$, then
$\gamma_0 n_{\overline {\eta}}' \gamma_0^{-1} =
u_{\overline {\eta}} (n_{\overline {\eta}}')^{-1}
$
with
\begin{align*} u_{\overline {\eta}} =
  \begin{pmatrix} 1 & 0 & \mu &-\alpha &\alpha\\ & 1 &0 &0 &-\alpha\\
& &I &0 &\nu\\ & & &1 &0\\ &&&&1
  \end{pmatrix}
\end{align*} with respect to the `basis' $e_1^+,v_0^+, U,
v_0,e_1^-$. After conjugating $n'g$ over $u_{\overline {\eta}}$, it
drops out of $f_s$ and $\hat {\tau}_c$. We also note that
\begin{align*} \gamma_0 m_1' (a_{\overline {\eta}})h_{\overline
{\eta}} m_1 (t) = h_{\overline {\eta}} m_1 (a_{\overline {\eta}})m_1'
(t) \overline {u} (-a_{\overline {\eta}}t^{-1})\gamma_1k_{\overline
{\eta}}
\end{align*} where $\overline {u} (-a_{\overline {\eta}}t^{-1})$ is
the element that acts on $e_1^+, v_0^+$ by
  $\smatrix{1}{0}{-a_{\overline {\eta}}t^{-1}}{1}$
and on $v_0, e_1^-$ by duality. We change variable
$t\mapsto -a_{\overline {\eta}}t^{-1}$ and thus the integral under
consideration becomes
\begin{align*} & \int_{K_{G (Z_1)}} \int_{[N (Fv_0^+ \oplus Z \oplus
Fv_0,Fv_0)] } \int_{\GL_1 (\A)} \int_{[G (Z)]} \sum_{\overline {\eta} \in
\overline {N} (V,Fv_0) } |t|_\A^{R} ||g||^{R_2} \\ &f_s (n'g (n_{\overline {\eta}}')^{-1}
h_{\overline {\eta}}m_1 (a_{\overline {\eta}})m_1' (t) \overline {u}
(-a_{\overline {\eta}}t^{-1})\gamma_1k_{\overline {\eta}}k)
\hat {\tau}_c (H (m_1 (a_{\overline {\eta}})m_1' (t) \overline {u}
(-a_{\overline {\eta}}t^{-1}))) dg dt dn' dk \\
=& \int_{K_{G (Z_1)}} \int_{[N (Fv_0^+ \oplus Z \oplus Fv_0,Fv_0)] }
\int_{\GL_1 (\A)} \int_{[G (Z)]} \sum_{\overline {\eta} \in \overline
{N} (V,Fv_0) } |a_{\overline {\eta}}t^{-1}|_\A^{R} ||g||^{R_2} \\&f_s (m_1 (a_{\overline {\eta}})n'g (n_{\overline
{\eta}}')^{-1} h_{\overline {\eta}}m_1' (-a_{\overline {\eta}}t^{-1})
\overline {u} (t)\gamma_1k_{\overline {\eta}} k) \hat
{\tau}_c (H (m_1 (a_{\overline {\eta}})\overline {u} (t)))
dg dt dn' dk.
\end{align*} The Iwasawa decomposition of $\overline {u} (t)$ can be
make explicit. Write $t=t_\infty t_1 t_2$ where $t_\infty$ is the
infinite part of $t$, $t_1$ consists of finite local components $t_v$
of $t$ such that $|t_v|_v \le 1$ and $t_2$ consists of finite local
components of $t$ such that $|t_v|_v > 1$. Then
\begin{align*}
  \begin{pmatrix} 1 & \\ t & 1
  \end{pmatrix} =&
  \begin{pmatrix} 1 & \overline {t}_\infty T_\infty^{-1} \\ & 1
  \end{pmatrix}
  \begin{pmatrix} T_\infty^{-1} & \\ & T_\infty
  \end{pmatrix}
  \begin{pmatrix} T_\infty^{-1} & -\overline {t}_\infty
T_\infty^{-1}\\ t_\infty T_\infty^{-1} & T_\infty^{-1}
  \end{pmatrix} \\ & \begin{pmatrix} 1 &\\ t_1 &1
  \end{pmatrix}
  \begin{pmatrix} 1 & t_2^{-1} \\ & 1
  \end{pmatrix}
  \begin{pmatrix} t_2^{-1} & \\ & t_2
  \end{pmatrix}
  \begin{pmatrix} & -1\\ 1 & t_2^{-1}
  \end{pmatrix}
\end{align*} where $T_\infty = (t_\infty\overline {t}_\infty+1)^\half$
and bar denotes complex conjugation. Thus we have
\begin{align*} & \int_{K_{G (Z_1)}} \int_{[N (Fv_0^+ \oplus Z \oplus
Fv_0,Fv_0)] } \int_{\GL_1 (\A)} \int_{[G (Z)]} \sum_{\overline {\eta} \in
\overline {N} (V,Fv_0) } |a_{\overline
{\eta}}T_\infty^{-1}t_2^{-1}|_\A^{s+\rho_{Q_1}} \\ &f_s (n'g
(n_{\overline {\eta}}')^{-1} h_{\overline {\eta}}m_1' (-a_{\overline
{\eta}}t^{-1} T_\infty t_2) k_t\gamma_1k_{\overline {\eta}} k)\\
&\quad \hat {\tau}_c (H (m_1 (a_{\overline
{\eta}}T_\infty^{-1}t_2^{-1}))) |a_{\overline {\eta}}t^{-1}|_\A^{R}
||g||^{R_2} dg dt dn' dk.
\end{align*} where $k_t\in K_{G (X_1)}$ is determined by $t$.  We have
bound from Lemma~\ref{lemma:rapid-dec-several-variable}: for any
$R_3>0$ there exists $C_3>0$ such that
\begin{align*} |f_s (n'g (n_{\overline {\eta}}')^{-1} h_{\overline
{\eta}}m_1' (-a_{\overline {\eta}}t^{-1} T_\infty t_2)
k_t\gamma_1k_{\overline {\eta}} k)| < C_3 m_{P_0} (g m_1' (-a_{\overline
{\eta}}t^{-1} T_\infty t_2))^{-R_3 \rho_{P_0}} \\ = C_3 m_{P_0}(g
)^{-R_3 \rho_{P_0}}|a_{\overline {\eta}}t^{-1} T_\infty
  t_2|_\A^{-R_3\dim X}
\end{align*} where $m_{P_0}$ is the map $G (X) (\A) \rightarrow
M_0 (\A)^1 \lmod M_0 (\A)$ as defined in \cite{MR1361168} for
the minimal parabolic subgroup $P_0$ of $X$. Thus when $R_3$ is large
enough integration over $[G (Z)]$ is absolutely convergent. The summation
over $\overline {\eta}$ is absolutely convergent for $\Re s$ large
enough because it is bounded by an Eisenstein series as is the case in
Sec.~\ref{sec:I-Omega10-abs-conv}. For `$t$'-part we need to check
convergence of
\begin{equation*} \int_{\A^\times}|T_\infty^{-1} t_2^{-1}
|_\A^{s+\rho_{Q_1}+R_3 \dim X} |t|_\A^{R_3 \dim X -R } d^\times t
\end{equation*} We set $C_4 = \rho_{Q_1}+R_3 \dim X$ and $C_5 = R_3
\dim X -R$. We pick $R_3$ such that $C_5 > 1$.
To define height of a vector, temporarily let $X$ be any vector space over $F$ with a fixed
basis. Let $x\in X (\A)$. We define $||x||_v$ to be the maximal norm
of the coordinates of $x_v$ if $v$ is finite, the usual Euclidean norm
if $v$ is real and the square of the usual Euclidean norm if $v$ is
complex. Then define $||x||=\prod_v ||x||_v$. Then 
$|T_\infty^{-1} t_2^{-1} |_\A$ is comparable to
$||e_1^- \overline {u} (t)||^{-1} = || e_1^- - t v_0
||^{-1}$.
Thus we just need to
show that 
$\int_{\A^\times}|| e_1^- - t v_0 ||^{- (s+C_4)}
|t|_\A^{C_5} d^\times t$ converges. 
Consider the local integrals.  For a finite place $v$ of
$F$
\begin{align*} &\int_{F_v^\times} || e_1^- - t v_0 ||_v^{- (s+
C_4)}|t|_\A^{C_5} d^\times t  = \int_{F_v^\times} \max\{1,
|t|_v\}^{- (s+ C_4)} |t|_\A^{C_5} d^\times t \\
=&\int_{\substack{F_v^\times\\ |t|_v \le 1}} |t|_\A^{C_4} d^\times t +
\int_{\substack{F_v^\times\\|t|_v > 1}} |t|_v^{- (s+ C_4 -C_5)}
d^\times t =\frac{L_v (C_5) L_v (s+ C_4 -C_5)}{L_v (s+ C_4)}.
\end{align*} Thus the product over all finite places of $F$ is
absolutely convergent for $\Re s$ large enough. For a real place $v$
of $F$,
\begin{equation*}
  \int_{F_v^\times} || e_1^- - t v_0 ||_v^{- (s+
C_4)}|t|_\A^{C_5} d^\times t = \int_{F_v^\times}
(1+|t|_v^2)^{-\half (s+ C_4)}|t|_\A^{C_5} d^\times t
\end{equation*}
while for a complex place $v$ of $F$,
\begin{equation*}
\int_{F_v^\times} || e_1^- - t v_0 ||_v^{- (s+
C_4)}|t|_\A^{C_5} d^\times t  =\int_{F_v^\times} (1+|t|_v)^{-(s+
C_4)}|t|_\A^{C_5} d^\times t.
\end{equation*}
They are both absolutely convergent for $\Re s$ large
enough. This shows that $I_{\Omega_{1,1}} (\xi_{c,s})$ is absolutely
convergent for $\Re s$ large enough. In fact truncation is not needed
for absolute convergence.

Next let $\xi = \xi_s^c$. Via an analogous computation we reach the
point where we need to show absolute convergence for
\begin{align*} & \int_{K_{G (Z_1)}} \int_{[N (Fv_0^+ \oplus Z \oplus
Fv_0,Fv_0)] } \int_{\GL_1 (\A)} \int_{[G (Z)]} \sum_{\overline {\eta} \in
\overline {N} (V,Fv_0) } |a_{\overline
{\eta}}T_\infty^{-1}t_2^{-1}|_\A^{-s+\rho_{Q_1}} \\ &M (w,s) f_s (n'g
(n_{\overline {\eta}}')^{-1} h_{\overline {\eta}}m_1' (-a_{\overline
{\eta}}t^{-1} T_\infty t_2) k_t\gamma_1k_{\overline {\eta}} k)\\
&\quad \hat {\tau}^c (H (m_1 (a_{\overline
{\eta}}T_\infty^{-1}t_2^{-1}))) |a_{\overline {\eta}}t^{-1}|_\A^{R}
||g||^{R_2} dg dt dn' dk.
\end{align*} In this case truncation is essential. We note that
$|a_{\overline {\eta}}T_\infty^{-1}t_2^{-1}|_\A$ is comparable to
\begin{align*} ||e_1^-\gamma_1\overline {\eta}\gamma_1^{-1} \overline
{u} (t)||^{-1} = ||e_1^-\gamma_1\overline {\eta}\gamma_1^{-1} -
tv_0||^{-1}.
\end{align*} Thus there exists some constant $c'>0$ such that only
those terms with $||e_1^-\gamma_1\overline {\eta}\gamma_1^{-1} -
tv_0||^{-1} > c'c$ are preserved. On the other hand
\begin{align*} ||e_1^-\gamma_1\overline {\eta}\gamma_1^{-1} - tv_0||
\ge ||e_1^-\gamma_1\overline {\eta}\gamma_1^{-1}|| \ge ||e_1^-||.
\end{align*} Thus if we choose $c$ large enough none of the terms is
preserved. In this case we obviously have absolute convergence.

\begin{lemma}
  For $\Re s$ large enough, $I_{\Omega_{1,1}}(\xi_{\infty,s})$ (i.e., without truncation) is absolutely convergent. For large enough truncation parameter $c$, $I_{\Omega_{1,1}}(\xi_s^c)$ is absolutely convergent for any $s$ where $M (w,s)$ is defined and it has value $0$.
\end{lemma}

\bibliographystyle{alpha} \bibliography{MyBib}
\end{document}